\theoremstyle{plain}
\newtheorem{thm}{Theorem}[section]
\newtheorem{prop}[thm]{Proposition}
\newtheorem{lemma}[thm]{Lemma}
\theoremstyle{definition}
\newtheorem{defn}[thm]{Definition}
\newtheorem{ex}[thm]{Example}
\newtheorem{remark}[thm]{Remark}
\newcommand{\defemph}[1]{\emph{\color{blue} #1}} 
\newcommand{\Z}{\mathbb{Z}}
\newcommand{\R}{\mathbb{R}}
\newcommand{\N}{\mathbb{N}}
\newcommand{\T}{\mathbb{T}}
\newcommand{\Q}{\mathcal{Q}}
\renewcommand{\k}{\mathbf{k}}
\newcommand{\fund}{\mathbf{fund}}
\newcommand{\proj}{\mathbf{proj}}
\newcommand{\diag}{\operatorname{Diag}}
\newcommand{\Zodd}{\mathbb{Z}^3_{\rm odd}}
\newcommand{\F}{\mathcal{F}}
\renewcommand{\P}{\mathbb{P}}
\newcommand{\sgn}{\operatorname{sgn}}
\newcommand{\Trop}{\operatorname{Trop}}
\newcommand{\G}{\overline{G}}
\newcommand{\edge}{%
  \mathrel{-}
  \joinrel\joinrel 
  \mathrel{-}
}
\title{solutions to the T-systems with principal coefficients}
\author{Panupong Vichitkunakorn}
\address{Department of Mathematics, University of Illinois, Urbana, IL 61821 USA}
\address{vichitk1@illinois.edu}
\date{\today}
\begin{document}

\begin{abstract}
The $A_\infty$ T-system, also called the octahedron recurrence, is a dynamical recurrence relation. It can be realized as mutation in a coefficient-free cluster algebra (Kedem 2008, Di Francesco and Kedem 2009). We define T-systems with principal coefficients from cluster algebra aspect, and give combinatorial solutions with respect to any valid initial condition in terms of partition functions of perfect matchings, non-intersecting paths and networks. This also provides a solution to other systems with various choices of coefficients on T-systems including Speyer's octahedron recurrence (Speyer 2007), generalized lambda-determinants (Di Francesco 2013) and (higher) pentagram maps (Schwartz 1992, Ovsienko et al. 2010, Glick 2011, Gekhtman et al. 2014).
\end{abstract}

\maketitle 


\section{Introduction}\label{sec_Intro}

The $A_\infty$ T-system \cite{DFK13}, also called the octahedron recurrence, is a discrete dynamical system of formal variables $T_{i,j,k}$ for $i,j,k\in\Z$ satisfying
\[
	T_{i,j,k-1}T_{i,j,k+1} = T_{i-1,j,k}T_{i+1,j,k} + T_{i,j-1,k}T_{i,j+1,k}.
\]
This recurrence relation preserves the parity of $i+j+k$ and therefore there are two independent systems depending on the parity of $i+j+k.$
We will assume throughout the paper that $i+j+k\equiv 1 \bmod 2$.
Several combinatorial solutions have been considered including solutions in terms of alternating sign matrices \cite{RR86}, domino tilings \cite{RR86,EKLP}, perfect matchings \cite{Speyer} and networks \cite{DFK13}.

The $A_\infty$~T-system can also be interpreted \cite{DFK09} as mutation in an infinite-rank cluster algebra \cite{FZ} of geometric type without coefficients.
Its corresponding quiver is called the octahedron quiver \cite{DFK13}.

One can also consider cluster algebras with coefficients \cite{FZ4}.
For cluster algebras of geometric type, this is equivalent to adding frozen vertices to the quiver.
A quite general type of coefficients are principal coefficients.
It corresponds to adding one frozen vertex for each quiver vertex and an arrow pointing from it to the quiver vertex.
In some literature, this new quiver is called the coframed quiver associated with the octahedron quiver.
The reason why the principal coefficients are very important is due to the separation formula \cite[Theorem 3.7]{FZ4}, stating that a cluster algebras with any coefficients can be written in terms of one with principal coefficients.

Some generalizations of T-systems with coefficients have been suggested by Speyer in his work on Speyer's octahedron recurrence \cite{Speyer} and by Di Francesco in his work on the generalized lambda determinant \cite{DF13}.
In this paper, we consider $A_\infty$ T-systems with principal coefficients using cluster algebras definition. We then give combinatorial solutions in terms of perfect matchings, non-intersecting paths and networks.

The paper is organized as follows.
In Section \ref{sec_Cluster}, we review some basic definitions and results in cluster algebras from \cite{FZ,FZ4}.
In Section \ref{sec_T-system}, we define the T-system with principal coefficients, whose initial condition is in the form of initial data on a stepped surface.

The goal is to find, for an arbitrary point $(i_0,j_0,k_0)$ and a stepped surface $\k$, an expression of $T_{i_0,j_0,k_0}$ in terms of initial data on $\mathbf{k}$.
Laurent phenomenon for cluster variables \cite[Theorem 3.1]{FZ} guarantees that the expression is indeed a Laurent polynomial in the initial data and coefficients.
In this paper, we give explicit combinatorial expressions of $T_{i_0,j_0,k_0}$ in terms of initial data when the point $(i_0,j_0,k_0)$ is above $\mathbf{k}$ and $\mathbf{k}$ is above the fundamental stepped surface $\fund:(i,j)\mapsto(i+j \bmod 2)-1.$
Some other cases will be discussed in Section \ref{sec_General}.

Section \ref{sec_Dimer} is devoted to a perfect matching solution.
Following the construction in \cite{Speyer}, we first construct a finite bipartite graph with open faces $G$ depending on both $(i_0,j_0,k_0)$ and $\mathbf{k}$, then construct face-weight $w_f$ and pairing-weight $w_p$ on perfect matchings of $G.$
This leads to the perfect-matching solution (Theorem \ref{thm:main}):
\[ T_{i_0,j_0,k_0} = \sum_{M} w_p(M)w_f(M) \]
where the sum runs over all perfect matchings of $G$.
The weight $w_p(M)$ is a monomial in the cluster coefficients, while $w_f(M)$ is a Laurent monomial in the initial data (cluster variables).

In Section \ref{sec_edge-weight}, we define the closure $\overline{G}$ of the graph $G$ and transform our previous two weights to the edge-weight $w_e.$
This gives another form of the perfect-matching solution (Theorem \ref{thm:edgesol}):
\[
	T_{i_0,j_0,k_0} = \sum_{\overline{M}} w_e(\overline{M}) \Big/ w_e(\overline{M}_0)\big|_{c_{i,j}=1}.
\]
The sum runs over all perfect matchings of $\overline{G}$ with a certain boundary condition, see Definition \ref{defn:Mbar0}.

In Section \ref{sec_path}, we orient all the edges of $G$ and $\overline{G}$ and give an explicit bijection between perfect matchings of $G$ and non-intersecting paths on $G$ with certain sources and sinks.
This bijection can also be extended to $\overline{G}.$
Using the modified edge-weight $w'_e$ obtaining from $w_e$ together with the bijection, the perfect-matching solution for $\overline{G}$ gives the nonintersecting-path solution (Theorem \ref{thm_pathsol}):
\[
	T_{i_0,j_0,k_0} = \sum_{\overline{P}} w_e'(\overline{P}) \Big/ \prod_{\substack{\circ\text{---}\bullet\\b}\in \overline{M}_0}\bar{p}_b
\]
where the sum runs over all non-intersecting paths on $\overline{G}$ with certain sources and sinks, see Theorem \ref{thm_pathsol}.

In Section \ref{sec_network}, we first consider the network $N$, studied in \cite{DF10,DFK13} associated with $(i_0,j_0,k_0)$ and $\mathbf{k}$.
It is obtained from the shadow of the point $(i_0,j_0,k_0)$ on the lozenge covering on $\mathbf{k}$.
We point out that it can also be obtained from the graph $\overline{G}$ by tilting all the diagonal edges of $\overline{G}$ so that they become horizontal.
This allows us to pass the modified edge-weight $w'_e$ on $\overline{G}$ to a weight on the network $N$.
Paths on $\overline{G}$ then become paths on $N$.
From Theorem \ref{thm_pathsol}, we get the network solution (Theorem \ref{thm:network}) as a partition function of weighted non-intersecting paths on the network $N$, which can also be written as a certain minor of the network matrices (Theorem \ref{thm:networkmatrix}).

In Section \ref{sec_app}, we discuss other types of coefficients of the T-system related to Speyer's octahedron recurrence \cite{Speyer}, generalized lambda-determinants \cite{DF13} and (higher) pentagram maps \cite{Schwartz,OST,Glick,GSTV14}.


\subsection*{Acknowledgements}
The author would like to thank his advisors R. Kedem and P. Di Francesco for their helpful advice and comments. The author also thank G. Musiker and M. Glick for discussions. This work was partially supported in part by a gift to the Mathematics Department at the University of Illinois from Gene H. Golub, the NSF grant DMS-1100929, NSF grant DMS-1301636 and the Morris and Gertrude Fine endowment.


\section{Cluster algebras}\label{sec_Cluster}
In this section, we quote some basic definitions and important results in the theory of cluster algebras mainly from \cite{FZ,FZ4}.

\subsection{Finite rank cluster algebras}
Let $(\P,\oplus,\cdot)$ be a semifield, i.e., $(\P,\cdot)$ is an abelian group, and $\oplus$ is an auxiliary addition: commutative, associative and distributive with respect to the multiplication.
The following are two important examples of semifields.

\begin{defn}[Universal semifield]
  For a set of labels $J,$ the \defemph{universal semifield} on the set of variables $\{y_j\mid j\in J\}$ denoted by $\mathbb{Q}_{sf}(y_j :j\in J) := (\mathbb{Q}_{sf}(y_j :j\in J),+,\cdot)$ is the set of all subtraction-free expressions of rational functions in independent variables $\{y_j \mid j\in J \}$ over $\mathbb{Q}$ with the usual addition and multiplication.
\end{defn}

\begin{defn}[Tropical semifield]
  For a set of labels $J,$ the \defemph{tropical semifield} on the set of variables $\{y_j\mid j\in J\}$ denoted by $\Trop(y_j : j\in J)$ is the free multiplicative abelian group generated by $\{y_j \mid j\in J\}$ with the auxiliary addition defined by:
  \[
    \prod_j y_j^{a_j}\oplus\prod_j y_j^{b_j} = \prod_j y_j^{\min(a_j,b_j)}.
  \]
\end{defn}

Here are some notations that will be used throughout the paper:
\[
  [x]_+=\max(0,x),~ [1,n]=\{1,2,\dots,n\},~ \sgn(x) = 
  \begin{cases} 
    -1, &x<0\\
    0,	&x=0\\
    1,	&x>0
  \end{cases}.
\]

Let $n\in\N,$ we now state the main definitions of cluster algebras of rank $n.$
Let~$\P$ be a semifield, $\F=\mathbb{Q}\P(x_1,\dots,x_n)$ an ambient field, $\T_n$ the $n-$regular tree whose $n$ edges incident to each vertex have different labels from $1$ to $n.$

\begin{defn}[Cluster patterns and Y-patterns]\label{def:mutation}
  A \defemph{cluster pattern} (resp. \defemph{Y-pattern}) is an assignment $t\mapsto (\mathbf{x}_t,\mathbf{y}_t,B_t)$ (resp. $t\mapsto(\mathbf{y}_t,B_t)$) of any vertex $t\in\T_n$ to a \defemph{labeled seed} (resp. \defemph{labeled Y-seed}) such that:
  \begin{itemize}
    \item The \defemph{cluster} tuple $\mathbf{x_t}=(x_{1;t},\dots,x_{n;t})$ is an $n-$tuple of elements of $\F$ forming a free generating set.
    \item The \defemph{coefficient} tuple $\mathbf{y_t}=(y_{1;t},\dots,y_{n;t})$ is an $n-$tuple in $\P.$
    \item The \defemph{exchange matrix} $B_t=(b^{(t)}_{ij})\in M_{n\times n}(\Z)$ is a skew-symmetrizable matrix.
    \item $ t \stackrel{k}{\edge} t'$ in $\T_n$ if and only if $(\mathbf{x}_t,\mathbf{y}_t,B_t) \stackrel{\mu_k}{\longleftrightarrow} (\mathbf{x}_{t'},\mathbf{y}_{t'},B_{t'}),$ where $\mu_k$ is the \defemph{seed mutation} in direction k defined by:
    \begin{itemize}
      \item $B_{t'}=(b^{(t')}_{ij})$ where
      \[
        b^{(t')}_{ij}=
        \begin{cases}
          -b^{(t)}_{ij}, &i=k\text{ or }j=k,\\
          b^{(t)}_{ij}+\sgn(b^{(t)}_{ik}) [b^{(t)}_{ik} b^{(t)}_{kj}]_+, &\text{otherwise.}
        \end{cases}
      \]
      \item $\mathbf{y}_{t'}=(y_{1,t'},\dots,y_{n,t'})$ where
      \[
        y_{j,t'} = 
        \begin{cases}
          y_{k,t}^{-1},	& j=k,\\
          y_{j,t} (y_{k;t}^+)^{[b^{(t)}_{kj}]_+}(y_{k;t}^-)^{-[-b^{(t)}_{kj}]_+}, & j\neq k.
        \end{cases}
      \]
      \item $\mathbf{x}_{t'}=(x_{1;t'},\dots,x_{n;t'})$ where
      \[
        x_{j;t'} = 
        \begin{cases}
          x_{k,t}^{-1} \left( y_{k;t}^+ \prod_{i=1}^n x_{i;t}^{[b^{(t)}_{ik}]_+} + y_{k;t}^- \prod_{i=1}^n x_{i;t}^{[-b^{(t)}_{ik}]_+} \right),	& j=k,\\
          x_{j;t}, & j\neq k,
        \end{cases}
      \]
      where $y_{k;t}^+ = \dfrac{y_{k;t}}{(y_{k;t}\oplus 1)}$ and $y_{k;t}^- = \dfrac{1}{(y_{k;t}\oplus 1)}.$
    \end{itemize}
  \end{itemize}
\end{defn}

\begin{defn}[Cluster algebra]
  The \defemph{cluster algebra} $\mathcal{A}$ associated with a cluster pattern $t\mapsto (\mathbf{x}_t,\mathbf{y}_t,B_t)$ for $t\in\mathbb{T}_n$ is defined to be the $\mathbb{ZP}$-algebra generated by all cluster variables, i.e.
  $$ \mathcal{A} =  \mathbb{Z}\mathbb{P}[x_{i,t} : t\in\mathbb{T}_n,i\in[1,n]].$$
\end{defn}

In this paper, we will consider only cluster algebras with skew-symmetric exchange matrix.
In this case, we can think of the cluster mutation as a combinatorial rule performing on a quiver.

\begin{defn}[The quiver associated with a skew-symmetric $B$]
  When the exchange matrix $B=(b_{ij})_{i,j\in [1,n]}$ is skew-symmetric, we define $\mathcal{Q}_B$, the \defemph{quiver} associated with $B$, to be a directed graph with vertices $1,\dots,n$.
  There are $b_{ij}$ arrows from $i$ to $j$ if and only if $b_{ij}>0.$
\end{defn}

All the information of the exchange matrix $B$, skew-symmetric, is encoded in the quiver $\mathcal{Q}_B.$
The mutation $\mu_k$ will then act on $\mathcal{Q}_B$ by the following process:
\begin{enumerate}
  \item For every pair of arrows $i\rightarrow k$ and $k\rightarrow j$, add an arrow $i\rightarrow j$.
  \item Reverse all the arrows incident to $k$.
  \item Remove all oriented 2-cycles.
\end{enumerate}
One can easily check that $\mu_k(\mathcal{Q}_B) = \mathcal{Q}_{\mu_k(B)}.$
The following is an example of the quiver associated with an exchange matrix and the quiver mutation.
\[
  \begin{psmallmatrix}
    0&1&0&-1 \\
    -1&0&1&0 \\
    0&-1&0&1 \\
    1&0&-1&0 
  \end{psmallmatrix}
  \enspace\stackrel{\mu_1}{\longleftrightarrow}\enspace
  \begin{psmallmatrix}
    0&-1&0&1 \\
    1&0&1&-1 \\
    0&-1&0&1 \\
    -1&1&-1&0 
  \end{psmallmatrix}
  \enspace\stackrel{\mu_3}{\longleftrightarrow}\enspace
  \begin{psmallmatrix}
    0&-1&0&1 \\
    1&0&-1&0 \\
    0&1&0&-1 \\
    -1&0&1&0 
  \end{psmallmatrix}
\]

\begin{center}
  \includegraphics[scale=0.8]{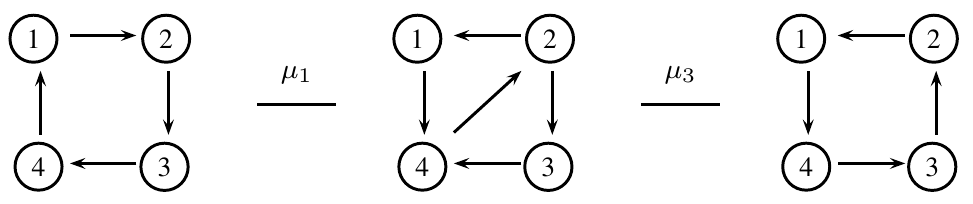}
\end{center}

Fixing $t_0\in\T_n,$ we can express variables $x_{1;t},\dots,x_{n;t},y_{1,t},\dots,y_{n;t}$ of the labeled seed at arbitrary $t\in\T_n$ in terms of variables $x_{1;t_0},\dots,x_{n;t_0},y_{1,t_0},\dots,y_{n;t_0}$ at $t_0.$
We will then call the labeled seed (resp. labeled Y-seed) at $t_0$ an \defemph{initial labeled seed} (resp. \defemph{initial labeled $Y$-seed}) and assign a simpler notations:
  \[
    \mathbf{x}=\mathbf{x}_{t_0},~\mathbf{y}=\mathbf{y}_{t_0},~B=B_{t_0}
  \]
where
  \[
    x_i = x_{i;t_0},~ y_i=y_{i,t_0},~b_{ij} = b_{ij}^{(t_0)} \quad(i,j\in[1,n]).
  \]

Unless stated otherwise, the initial labeled seed $(\mathbf{x},\mathbf{y},B)$ is always at $t_0$ and we denote $\mathcal{A}(\mathbf{x},\mathbf{y},B)$ the cluster algebra with an initial labeled seed $(\mathbf{x},\mathbf{y},B).$
Clearly, a cluster pattern (resp. Y-pattern) is completely determined by its initial labeled seed (resp. initial labeled Y-seed).
In addition, a cluster variable is a Laurent polynomial in the initial variables, as stated in the following theorem.

\begin{thm}[Laurent phenomenon {{\cite[Theorem 3.7]{FZ4}}}]\label{thm:Laurent}
  The algebra $\mathcal{A}(\mathbf{x},\mathbf{y},B)$ is contained in the Laurent polynomial ring $\Z\P[\mathbf{x}^{\pm 1}]$, i.e. every cluster variable is a Laurent polynomial over $\Z\P$ in the initial cluster seed $x_1,\dots,x_n$.
\end{thm}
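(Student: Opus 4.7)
The plan is to prove the Laurent phenomenon by induction on the distance $d(t_0, t)$ in the tree $\T_n$. The base case $t = t_0$ is immediate because $x_{i;t_0} = x_i$ trivially lies in $\Z\P[\mathbf{x}^{\pm 1}]$. The naive inductive step, however, fails: to express $x_{j;t'}$ in terms of $\mathbf{x}$ using the exchange relation in Definition \ref{def:mutation}, one has to invert $x_{k;t}$, and the inductive hypothesis only provides that $x_{k;t}$ is a Laurent polynomial in $\mathbf{x}$, which is not the same as being invertible in $\Z\P[\mathbf{x}^{\pm 1}]$.

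To circumvent this obstacle, I would follow the strategy of Fomin--Zelevinsky and strengthen the inductive hypothesis. Instead of tracking Laurentness in a single initial seed, I would establish Laurentness simultaneously in every seed adjacent to $t_0$; that is, I would show that every cluster variable $x_{i;t}$ lies in the \emph{upper bound}
\[
  \mathcal{U}(\mathbf{x}, B) \;:=\; \Z\P[\mathbf{x}^{\pm 1}] \;\cap\; \bigcap_{k=1}^n \Z\P[\mathbf{x}_{\mu_k(t_0)}^{\pm 1}],
\]
with the intersection taken inside the ambient field $\F$. The key point is the \emph{caterpillar lemma}: if $(\mathbf{x}', B')$ is obtained from $(\mathbf{x}, B)$ by a single mutation, then $\mathcal{U}(\mathbf{x}, B) = \mathcal{U}(\mathbf{x}', B')$. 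Granting this, induction on $d(t_0, t)$ becomes clean: by the hypothesis applied at every neighbour $t'$ of $t_0$, the variable $x_{i;t}$ lies in $\Z\P[\mathbf{x}_{t'}^{\pm 1}]$ for all such $t'$, hence in $\mathcal{U}(\mathbf{x}, B)$, which is contained in $\Z\P[\mathbf{x}^{\pm 1}]$.

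The main obstacle is the caterpillar lemma itself. I would prove it in two steps. First, reduce to a local statement: by freezing all variables $x_i$ with $i \notin \{j, k\}$, any comparison between $\mathcal{U}(\mathbf{x}, B)$ and $\mathcal{U}(\mu_j\mu_k(\mathbf{x}, B))$ reduces to a statement about a rank-$2$ subseed, since the only exchange polynomials that change under $\mu_j$ and $\mu_k$ are those at the vertices $j$ and $k$. Second, carry out the rank-$2$ check directly: using the explicit mutation formulas for $\mathbf{y}_{t}$ and $\mathbf{x}_{t}$ in Definition \ref{def:mutation}, verify by hand that the exchange binomials at $j$ and $k$ become coprime in $\Z\P[x_i^{\pm 1} : i \neq j, k][x_j, x_k]$ after clearing denominators, so that any element of the intersection is in fact regular on the union.

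I expect the bookkeeping of coefficients to be the most delicate part: with general $\mathbf{y} \in \P$, the exchange polynomials are inhomogeneous binomials $y^+_{k;t}\prod_i x_i^{[b_{ik}]_+} + y^-_{k;t}\prod_i x_i^{[-b_{ik}]_+}$, and the coprimality argument must respect the semifield structure of $\P$ (in particular the auxiliary addition $\oplus$ that defines $y^\pm_{k;t}$). Once this rank-$2$ coprimality is established, the caterpillar lemma and hence Theorem \ref{thm:Laurent} follow by the induction described above.
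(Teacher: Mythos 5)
First, a point of calibration: the paper does not prove this statement at all. It is imported verbatim from Fomin--Zelevinsky (the correct source is Theorem 3.1 of \emph{Cluster algebras I}, as the introduction of this paper itself says; the tag ``[FZ4, Theorem 3.7]'' attached to the theorem here actually points at the separation formula, a different result). So there is no in-paper proof to compare against; what follows measures your sketch against the standard argument in the literature.

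Your outline has the right architecture: the naive induction fails for exactly the reason you give, and the remedy is indeed to strengthen the inductive hypothesis to simultaneous membership in several Laurent rings and to reduce the key step to a coprimality computation. But two specific steps, as written, are gaps. First, the lemma you call the caterpillar lemma --- $\mathcal{U}(\mathbf{x},B)=\mathcal{U}(\mathbf{x}',B')$ for a single mutation, with no hypotheses --- is the Berenstein--Fomin--Zelevinsky invariance of \emph{upper bounds}; that theorem requires the seed to be coprime (no two exchange binomials proportional), and it fails for non-coprime seeds, whereas the Laurent phenomenon holds unconditionally. The genuine caterpillar lemma of Fomin--Zelevinsky is a different statement: it propagates Laurentness along the spine of a caterpillar tree under explicit gcd conditions relating the exchange polynomial at a spine vertex to the substituted image of its neighbour's, and its heart is the computation showing that the variable produced by three consecutive spine mutations still lies in the initial Laurent ring. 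Second, your reduction to rank $2$ rests on the claim that only the exchange polynomials at $j$ and $k$ change under $\mu_j$ and $\mu_k$; this is false, since $\mu_k$ alters $b_{ij}$ for $i,j\neq k$ through the term $\sgn(b^{(t)}_{ik})[b^{(t)}_{ik}b^{(t)}_{kj}]_+$ and also alters the coefficients $y_{j}$, so exchange polynomials away from $k$ do change. The correct reduction instead rewrites the upper bound as an intersection of rings of the form $\Z\P[x_i^{\pm 1}: i\neq j][x_j,x_j']$ and invokes a separate commutative-algebra lemma about such two-generator extensions. Neither gap is fatal to the overall strategy --- both are repaired in the sources you are implicitly following --- but as written the proposal leans on a false intermediate lemma and a false reduction, so it does not yet constitute a proof.
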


If $y_i=1$ for all $i\in[1,n]$, we have $y_{i,t}=1$ for all $i\in[1,n],t\in\T_n.$
We call it a \defemph{coefficient-free} cluster algebra, and write just $(\mathbf{x}_t,B_t)$ for for its labeled seeds.
  
\begin{defn}[Frozen variables]
  For a cluster algebra (resp. cluster pattern)  of rank $m$ with initial seed $(\mathbf{x},\mathbf{y},B)$, we consider a subpattern $t\in \T_n\subseteq \T_m \mapsto (\mathbf{x}_t,\mathbf{y}_t,B_t)$.
  It is the pattern obtained by $\mu_1,\dots,\mu_n$.
  That means the directions $n+1,\dots,m$ are not mutated.
  We call it a cluster algebra (resp. cluster pattern) of rank $n$ with \defemph{frozen variables} $x_{n+1},\dots,x_m.$
\end{defn}

\begin{remark}
  In the cluster algebra of rank $n$ with frozen variables $x_{n+1},\dots,x_m,$ the cluster seeds are not mutated in directions $n+1,\dots,m$.
  So, the necessary information in the $m\times m$ matrix $B$ for mutations are only the columns $1$ to $n.$
  Hence we often use a \defemph{reduced exchange matrix} $\tilde{B}$ instead of the full exchange matrix $B$, where $\tilde{B}$ is the $m\times n$ submatrix of $B$ obtained by deleting columns $n+1$ to $m.$
\end{remark}


\begin{defn}[Geometric type]
  A cluster algebra (or cluster pattern, or Y-pattern) is of \defemph{geometric type} if $\P$ is a tropical semifield.
\end{defn}

\begin{remark}[Geometric type and Frozen variables]\label{rem:GeometricFrozen}
  For a cluster algebra or cluster pattern of geometric type, the notion of coefficients and frozen variables are interchangeable.
  Let $t\in\T_n\mapsto(\mathbf{x}_t,\mathbf{y}_t,B_t)$ be a cluster pattern of geometric type of rank $n$ where $\P = \Trop(x_{n+1},\dots,x_{m}).$ Since $x_{n+1},\dots,x_{m}$ generate $\P,$ we can choose the initial seed coefficients to be $y_j = \prod_{i=n+1}^m x_i^{{b}_{ij}}$ for all $j\in [1,n].$
  Then the pattern is equivalent to a coefficient-free cluster pattern: $t\in\T_m\mapsto ((x_{1;t},\dots,x_{m;t}),\tilde{B}_t)$ with frozen variables $x_{n+1},\dots,x_{m}$, where $\tilde{B} = (b_{ij})_{m\times n}.$
\end{remark}

\begin{ex}\label{ex:frozen}
  Consider a semifield $\P=\Trop(x_5,x_6)$ and a rank 4 cluster algebra of geometric type with an initial seed $(\mathbf{x},\mathbf{y},B)$ where
  \[
    \mathbf{x}=(x_1,x_2,x_3,x_4),\quad\mathbf{y}=\left(\frac{x_6}{x_5},\frac{1}{x_6},1,1\right),\quad B=
    \begin{psmallmatrix}
      0&1&0&-1 \\
      -1&0&1&0 \\
      0&-1&0&1 \\
      1&0&-1&0 
    \end{psmallmatrix}.
  \]
  We have $y_1^+=x_6$ and $y_1^-=x_5$.
  After the seed mutation $\mu_1$ (Definition \ref{def:mutation}), we have
  \[
    \mathbf{x}'=\left( \dfrac{x_6x_4+x_5x_2}{x_1} , x_2,x_3,\dots,x_6 \right), \quad \mathbf{y}' = \left( \dfrac{x_5}{x_2}, 1, 1, \dfrac{1}{x_5} \right).
  \]
  On the other hand, by Remark \ref{rem:GeometricFrozen}, we can think of $x_5,x_6$ as frozen variables and transform our cluster algebra with coefficients to the coefficient-free cluster algebra of rank 6 with the following cluster variables and reduced exchange matrix.
  \[
    \mathbf{x}=(x_1,x_2,\dots,x_6),\quad \tilde{B}=
    \begin{psmallmatrix}
      0&1&0&-1 \\
      -1&0&1&0 \\
      0&-1&0&1 \\
      1&0&-1&0 \\
      -1&0&0&0 \\
      1&-1&0&0
    \end{psmallmatrix}.
  \]
  The mutation $\mu_1$ gives the new cluster tuple $\mathbf{x}'=\left( \dfrac{x_4x_6+x_2x_5}{x_1} , x_2,x_3,\dots,x_6 \right)$ and the following quiver mutation.
  \begin{center}
    \includegraphics[scale=0.8]{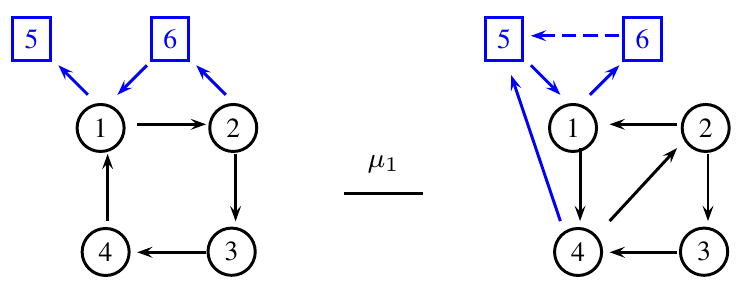}
  \end{center}
  We see that the mutated quiver encodes the information of $\mathbf{y}'= \left( \dfrac{x_5}{x_2}, 1, 1, \dfrac{1}{x_5} \right)$.
  Also notice that we can omit arrows between frozen variables because they will not effect any mutations at non-frozen variables.
\end{ex}

\begin{defn}[Principal coefficient]\label{defn:principalcoef}
  A cluster algebra (or cluster pattern, or Y-pattern) has a \defemph{principal coefficients} at $t_0\in\T_n$ if $\P=\Trop(y_1,\dots,y_n)$ where the initial coefficient tuple is $y_{t_0}=(y_1,\dots,y_n)$.
  We denote $\mathcal{A}_\bullet(B)$ for the cluster algebra with principal coefficients.
\end{defn}

\begin{remark}[Principal coefficients and Frozen variables]\label{rem:PrincCoefFrozen}
  From Remark \ref{rem:GeometricFrozen}, a cluster algebra with principal coefficients of rank $n$ with initial seed $(\mathbf{x},\mathbf{y},B)$ where $\mathbf{x}=(x_1,\dots,x_n)$ and $\mathbf{y}=(y_1,\dots,y_n)$ can be identified with a coefficient-free cluster algebra of rank $2n$ with an initial seed
  $$((x_1,\dots,x_n,y_1,\dots,y_n),\tilde{B})\quad\text{where}\enspace \tilde{B}=\begin{pmatrix} B \\ I_n \end{pmatrix}.$$
  
  The quiver $\mathcal{Q}_{\tilde{B}}$ is obtained from $\mathcal{Q}_B$ by adding one vertex $i'$ and an arrow $i'\rightarrow i$ for any vertex $i$ in the quiver $Q_{B}$.
  The new $\mathcal{Q}_{\tilde{B}}$ is called the \defemph{coframed quiver} associated with $\mathcal{Q}_B.$
\end{remark}

\begin{defn}[The functions $X_{l;t}^{(B)}$ and $F_{l;t}^{(B)}$]\label{def:XF}
  Given an exchange matrix $B,$ we consider the unique (up to isomorphism) cluster pattern $t\mapsto({\mathbf{X}}_t,{\mathbf{Y}}_t,B_t)$ with principal coefficients at $t_0$ and an initial seed $(\mathbf{X},\mathbf{Y},B).$
  For $l\in[1,n]$ and $t\in\T_n,$ we let 
  $$X_{l;t}^{(B)}\in\mathbb{Q}_{sf}(X_1,\dots,X_n;Y_1,\dots,Y_n)$$
  be the $l$-th component of the cluster tuple at $t$, and
  \begin{align*}
    F_{l;t}^{(B)} &:= X_{l;t}^{(B)}(1,\dots,1;Y_1,\dots,Y_n)\in\Z[Y_1,\dots,Y_n].
  \end{align*}
In short, $X_{l;t}^{(B)}$ is a cluster variable in the cluster algebra with principal coefficients.
For a fixed $B$, we often view it as a function on the initial variables $X_i$ and $Y_i$ for $i\in[1,n]$.
The function $F_{l;t}^{(B)}$ is a specialization of $X_{l;t}^{(B)}$ when $X_i=1$ for $i\in[1,n]$.
\end{defn}

The next theorem states that cluster variables of any cluster pattern can be written in terms of the functions $X_{l;t}^{(B)}$ and $F_{l;t}^{(B)}$ with some restriction.

\begin{thm}[Separation formula {\cite[Theorem 3.7]{FZ4}}]\label{thm:sep}
  Let $t\mapsto (\mathbf{x}_t,\mathbf{y}_t,B_t)$ be a cluster pattern over a semifield $\P$ with an initial seed $(\mathbf{x},\mathbf{y},B).$
  Then
  \[
    x_{l;t} = \frac{X_{l;t}^{(B)}(x_1,\dots,x_n;y_1,\dots,y_n)}{F_{l;t}^{(B)} |_\P(y_1,\dots,y_n)}.
  \] 
\end{thm}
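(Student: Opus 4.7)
The plan is to proceed by induction on the distance $d(t_0,t)$ in the $n$-regular tree $\T_n$. At $t=t_0$, one has $X_{l;t_0}^{(B)}=X_l$ and $F_{l;t_0}^{(B)}=1$, so the right-hand side evaluates to $x_l=x_{l;t_0}$, establishing the base case. Before proceeding to the inductive step, I would first verify that $F_{l;t}^{(B)}$ is an honest polynomial in $\Z[Y_1,\dots,Y_n]$, not merely a subtraction-free rational expression: the Laurent phenomenon (Theorem \ref{thm:Laurent}) applied to the principal-coefficient pattern places $X_{l;t}^{(B)}$ in $\Z[Y_1,\dots,Y_n][X_1^{\pm 1},\dots,X_n^{\pm 1}]$, so the specialization at $X_i=1$ lies in $\Z[Y_1,\dots,Y_n]$ and its subtraction-free origin forces nonnegative integer coefficients. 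This is precisely what gives meaning to $F_{l;t}^{(B)}|_\P$ in any semifield $\P$.

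For the inductive step at a mutation $t\stackrel{k}{\edge}t'$, I would substitute the inductive hypothesis for each $x_{i;t}$ into the exchange relation for $x_{k;t'}$ from Definition \ref{def:mutation}, then carry out the parallel exchange in the principal-coefficient pattern to get an expression for $X_{k;t'}^{(B)}$. Placing both sides over the common denominator $F_{k;t'}^{(B)}|_\P$ reduces the desired identity to two ingredients: a polynomial identity relating $F_{k;t'}^{(B)}$ to the $F_{i;t}^{(B)}$, obtained by setting $X_i=1$ in the principal-coefficient exchange relation; and the mutation formula for $\mathbf{y}$-coefficients, which expresses $y_{k;t}^\pm$ in terms of initial coefficients in a way compatible with the corresponding $Y_{k;t}^\pm$ in the principal-coefficient pattern.

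The main obstacle is reconciling the two notions of addition that appear: the auxiliary sum $\oplus$ in $\P$, which governs the factors $y_{k;t}^\pm=y_{k;t}^{[\pm 1]_+}/(y_{k;t}\oplus 1)$ in the ambient pattern, versus ordinary addition in the universal semifield, which governs the analogous $Y_{k;t}^\pm$ before specialization. Bridging this gap amounts to showing that the evaluation $Y_i\mapsto y_i$ is compatible with $\oplus$ when restricted to F-polynomials, which hinges on their nonnegative-coefficient structure together with the semifield distributivity of $\oplus$. Once this compatibility is secured, the tropical factors in the exchange relation absorb the ratio of F-polynomial denominators exactly as the formula demands, closing the induction.

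I expect the genuinely delicate point to be the last one: keeping careful track of how $y_{k;t}^+$ and $y_{k;t}^-$ interact with the normalization by $F_{k;t}^{(B)}|_\P$, since the cancellation that makes the two sides agree uses both the tropical identity $[a]_+ - [-a]_+ = a$ and the fact that F-polynomials have constant term $1$.
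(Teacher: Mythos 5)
First, a point of reference: the paper does not prove this statement at all --- it is quoted verbatim from Fomin--Zelevinsky \cite[Theorem 3.7]{FZ4} and used as a black box (e.g.\ in Example \ref{ex:sep} and Section \ref{sec_app}). So there is no in-paper proof to compare against; your proposal can only be measured against the original argument in \cite{FZ4}, whose skeleton (induction on the distance from $t_0$ in $\T_n$, base case $X_{l;t_0}^{(B)}=X_l$, $F_{l;t_0}^{(B)}=1$, inductive step via the exchange relation in direction $k$) your plan correctly reproduces.

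That said, several of the supporting claims you lean on are wrong or unavailable, and they matter. (i) You assert that the subtraction-free origin of $F_{l;t}^{(B)}$ ``forces nonnegative integer coefficients.'' This implication is false: a subtraction-free rational expression can equal a polynomial with negative coefficients (e.g.\ $(x^3+y^3)/(x+y)=x^2-xy+y^2$), and positivity of $F$-polynomial coefficients was a hard open conjecture at the time of \cite{FZ4}, not an elementary consequence. What actually makes $F_{l;t}^{(B)}|_\P$ well defined is only that $F_{l;t}^{(B)}$ lies in the image of the universal semifield $\mathbb{Q}_{sf}(Y_1,\dots,Y_n)$, so that evaluation in $\P$ is the image under the canonical semifield homomorphism; this is also the correct resolution of your ``two additions'' obstacle, with no appeal to positivity. (ii) Polynomiality of $F_{l;t}^{(B)}$ does not follow from Theorem \ref{thm:Laurent} as quoted: for $\P=\Trop(Y_1,\dots,Y_n)$ that theorem only places the coefficients in $\Z\P$, i.e.\ Laurent polynomials in the $Y_j$; excluding negative powers of $Y_j$ is a separate statement (\cite[Proposition 3.6]{FZ4}) with its own induction. (iii) Your closing step invokes ``the fact that F-polynomials have constant term $1$.'' That is \cite[Conjecture 5.4]{FZ4} --- unproven at the time in general and, more to the point, not needed for the separation formula; if your cancellation genuinely requires it, the induction as you have set it up does not close. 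The structural plan is the right one, but as written the justifications for the three load-bearing lemmas are each either incorrect or circular, so the proposal has genuine gaps rather than being a complete alternative proof.
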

The notation $F_{l;t}^{(B)} |_\P(y_1,\dots,y_n)$ means that we compute $F_{l;t}^{(B)}(y_1,\dots,y_n)$ in $\P$ by changing $+$ to $\oplus$.

\begin{ex}\label{ex:sep}
  Consider the cluster algebra with principal coefficients with the same exchange matrix as in Example \ref{ex:frozen}.
  Let $\P=\Trop(Y_1,Y_2,Y_3,Y_4)$, we can write an initial seed as $(\mathbf{X},\mathbf{Y},B)$ where
  \[
    \mathbf{X}=(X_1,X_2,X_3,X_4),\quad \mathbf{Y}=(Y_1,Y_2,Y_3,Y_4).
  \]
  By Remark \ref{rem:PrincCoefFrozen}, we think of $Y_i$'s as frozen variables and get a coefficient-free cluster algebra of rank 8 with the following quiver and exchange matrix, where $Y_i$ is the cluster variable on the vertex $i+4$.
  \[
    \raisebox{-.5\height}{\includegraphics[scale=0.8]{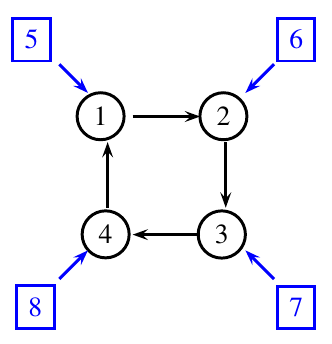}}
    \hspace{50pt}
    \scalebox{0.8}
    {$
    \begin{pmatrix}
      0&1&0&-1 \\
      -1&0&1&0 \\
      0&-1&0&1 \\
      1&0&-1&0 \\
      1&0&0&0 \\
      0&1&0&0 \\
      0&0&1&0 \\
      0&0&0&1
    \end{pmatrix}
    $}
  \]
  Then the mutation $\mu_1$ gives 
  \[
    X_1' = \dfrac{Y_1 X_4 + X_2}{X_1}.
  \]
  Let us try to compute $x'_1$ in Example \ref{ex:frozen} using the separation formula.
  From the formula, we think of $X'_1$ as a function $(X_1,\dots,X_4;Y_1,\dots,Y_4)\mapsto\dfrac{Y_1 X_4 + X_2}{X_1}$.
  Then
  \begin{align*}
    x'_1 &= \dfrac{ X'_1 \left(x_1,x_2,x_3,x_4;\frac{x_6}{x_5},\frac{1}{x_6},1,1 \right) }{ X'_1 \left(1,1,1,1;\frac{x_6}{x_5},\frac{1}{x_6},1,1 \right) \big|_{\Trop(x_5,x_6)}}\\
    &= \dfrac {( \frac{x_6}{x_5} x_4 + x_2 )/x_1 }{ ( \frac{x_6}{x_5} \oplus 1 )/1  } = \dfrac {( \frac{x_6}{x_5} x_4 + x_2 )/x_1 }{{1}/{x_5} } = \dfrac{x_4x_6+x_2x_5}{x_1}.
  \end{align*}
\end{ex}

\begin{defn}[The functions $Y_{l,t}^{(B)}$]
  Given an exchange matrix $B,$ we consider the unique (up to isomorphism) Y-pattern $t\mapsto(\mathbf{Y}_t,B_t)$ having an initial seed $(\mathbf{Y},B)$ in the semifield $\mathbb{Q}_{sf}(Y_1,\dots,Y_n).$
  Let $Y_{l;t}^{(B)}\in\mathbb{Q}_{sf}(Y_1,\dots,Y_n)$ be the $l$-th component in the coefficient tuple at $t$.
\end{defn}
Again, we think of $Y_{l;t}^{(B)}$ as a function on $Y_1,\dots,Y_n$.
The next theorem gives an expression of the function $Y_{j,t}^{(B)}$ in terms of the polynomials $F_{i,t}^{(B)}.$

\begin{thm}[{\cite[Proposition 3.13]{FZ4}}]\label{thm:YasF}
  Given an exchange matrix $B$ and a semifield $\P=\Trop(y_1,\dots,y_n)$, we get
  \[
    Y_{j;t}^{(B)}(y_1,\dots,y_n) = Y_{j;t}^{(B)}\big|_\P(y_1,\dots,y_n) \prod_{i=1}^n \left(F_{i;t}^{(B)}(y_1,\dots,y_n)\right)^{b_{ij}^{(t)}}.
  \]
  where $Y_{j;t}^{(B)}|_\P(y_1,\dots,y_n)$ can be interpreted as a cluster coefficient in the Y-pattern with principal coefficients with an initial coefficient tuple $(y_1,\dots,y_n)$.
\end{thm}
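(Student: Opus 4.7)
The plan is to prove the identity by induction on $d(t_0,t)$, the distance in $\T_n$. The base case $t=t_0$ is immediate: the left-hand side equals the initial coefficient $y_j$, while on the right $Y_{j;t_0}^{(B)}|_\P(y)=y_j$ and each $F_{i;t_0}^{(B)}=X_{i;t_0}^{(B)}(1,\ldots,1;Y_1,\ldots,Y_n)=1$.

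For the inductive step, suppose the identity holds at $t$ and let $t\stackrel{k}{\edge}t'$. Using $[b]_+-[-b]_+=b$, the Y-mutation of Definition~\ref{def:mutation} collapses for $j\neq k$ to
\[
  y_{j,t'} = y_{j,t}\,y_{k,t}^{[b_{kj}^{(t)}]_+}(y_{k,t}\oplus 1)^{-b_{kj}^{(t)}},
\]
with $y_{k,t'}=y_{k,t}^{-1}$ otherwise. The case $j=k$ follows directly from the induction hypothesis, using $b_{ik}^{(t')}=-b_{ik}^{(t)}$, $F_{i;t'}^{(B)}=F_{i;t}^{(B)}$ for $i\neq k$, and $b_{kk}=0$. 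For $j\neq k$ the essential input is the exchange relation for $F$-polynomials, obtained by specializing the cluster variable mutation of Definition~\ref{def:mutation} at $X_1=\cdots=X_n=1$:
\[
  (Y_{k;t}^{(B)}|_\P\oplus 1)\,F_{k;t'}^{(B)}F_{k;t}^{(B)} = Y_{k;t}^{(B)}|_\P\prod_i (F_{i;t}^{(B)})^{[b_{ik}^{(t)}]_+} + \prod_i (F_{i;t}^{(B)})^{[-b_{ik}^{(t)}]_+}.
\]
Combined with the induction hypothesis in direction $k$, this rewrites
\[
  Y_{k;t}^{(B)}(y)+1 = \frac{F_{k;t'}^{(B)}(y)\,F_{k;t}^{(B)}(y)}{\prod_i (F_{i;t}^{(B)}(y))^{[-b_{ik}^{(t)}]_+}}\,(Y_{k;t}^{(B)}|_\P(y)\oplus 1).
\]

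Substituting this expression along with the induction hypotheses for $Y_{j;t}^{(B)}$ and $Y_{k;t}^{(B)}$ into the Y-mutation formula, the tropical factors assemble into $Y_{j;t'}^{(B)}|_\P(y)$ by the very same mutation applied in $\P$, while the remaining factor is a Laurent monomial $\prod_i (F_{i;t'}^{(B)}(y))^{e_i}$. The main obstacle, and essentially the only nontrivial bookkeeping, is to verify that $e_i=b_{ij}^{(t')}$. For $i=k$ all contributions to the exponent of $F_{k;t}^{(B)}$ cancel (using $b_{kk}=0$), while $F_{k;t'}^{(B)}$ appears with exponent $-b_{kj}^{(t)}=b_{kj}^{(t')}$. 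For $i\neq k$ the exponent evaluates to
\[
  b_{ij}^{(t)}+[b_{kj}^{(t)}]_+ b_{ik}^{(t)}+b_{kj}^{(t)}[-b_{ik}^{(t)}]_+,
\]
and a direct sign analysis of the four cases for $(b_{ik}^{(t)},b_{kj}^{(t)})$ verifies the identity $[b_{kj}]_+ b_{ik}+b_{kj}[-b_{ik}]_+=\sgn(b_{ik})[b_{ik}b_{kj}]_+$, so the total is $b_{ij}^{(t)}+\sgn(b_{ik}^{(t)})[b_{ik}^{(t)}b_{kj}^{(t)}]_+=b_{ij}^{(t')}$, completing the induction.
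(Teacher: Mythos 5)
Your induction is correct, and every step checks out: the collapsed form of the Y-mutation $y_{j,t'}=y_{j,t}y_{k,t}^{[b_{kj}]_+}(y_{k,t}\oplus 1)^{-b_{kj}}$ is right, the $F$-polynomial exchange relation you quote is exactly the specialization $X_1=\cdots=X_n=1$ of the exchange relation in the principal-coefficient pattern (where the coefficient $y_{k;t}$ is $Y_{k;t}^{(B)}|_\P$), and the exponent identity $b_{ik}[b_{kj}]_+ + b_{kj}[-b_{ik}]_+ = \sgn(b_{ik})[b_{ik}b_{kj}]_+$ holds in all sign cases, so the exponents do assemble into $b_{ij}^{(t')}$. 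Note, however, that the paper does not prove this statement at all --- it is quoted verbatim as \cite[Proposition 3.13]{FZ4} --- so there is no in-paper argument to compare against. The proof in Fomin--Zelevinsky takes a different route: they first establish that the elements $\hat{y}_{j}=y_j\prod_i x_i^{b_{ij}}$ propagate as a Y-pattern in the ambient field, i.e.\ $\hat{y}_{j;t}=Y_{j;t}^{(B)}(\hat{y}_1,\dots,\hat{y}_n)$, and then obtain the stated identity in one line by setting $x_1=\cdots=x_n=1$ in $\hat{y}_{j;t}=y_{j;t}\prod_i x_{i;t}^{b_{ij}^{(t)}}$. Your argument is self-contained and avoids introducing the $\hat{y}$-variables, at the cost of doing by hand the same sign bookkeeping that is hidden inside their Proposition 3.9; the $\hat{y}$ route buys a reusable structural statement, while yours is more elementary and directly verifiable from Definition \ref{def:mutation}. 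Either way, the proof is valid.
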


\begin{ex}
  Consider the Y-pattern with the following quiver and the initial coefficient tuple $(y_1,y_2,y_3,y_4)$ in $\P= \mathbb{Q}_{sf}(y_1,y_2,y_3,y_4)$.
  \begin{center}
    \includegraphics[scale=0.8]{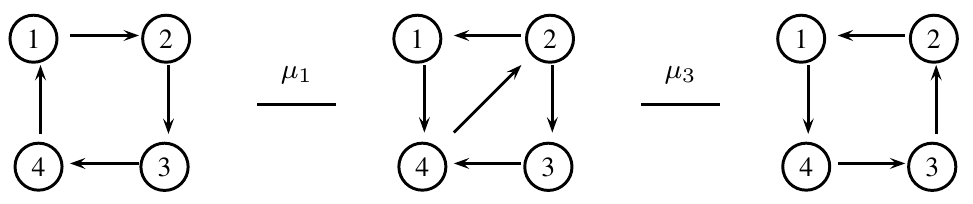}
  \end{center}
  \begin{align}
    \begin{aligned}
      \mathbf{y}= (y_1,\,y_2,\,y_3,\,y_4)\quad &\overset{\mu_1}{\line(1,0){20}}\quad
      \mathbf{y}'= \left( \frac{1}{y_1},\, y_2\frac{y_1}{1+y_1},\, y_3,\, y_4(1+y_1) \right)\\
      &\overset{\mu_3}{\line(1,0){20}} \quad
      \mathbf{y}''= \left( \frac{1}{y_1},\, y_2y_1\frac{1+y_3}{1+y_1},\, \frac{1}{y_3},\, y_4y_3\frac{1+y_1}{1+y_3} \right)
    \end{aligned}
    \label{eq:ypattern}
  \end{align}
  
  Consider a different Y-pattern with the same quiver but with principal coefficients.
  So with the same initial coefficients, we set $\P=\Trop(Y_1,Y_2,Y_3,Y_4)$.
  Using Remark \ref{rem:GeometricFrozen}, we can realize the coefficients as frozen variables.
  \begin{center}
    \includegraphics[scale=0.8]{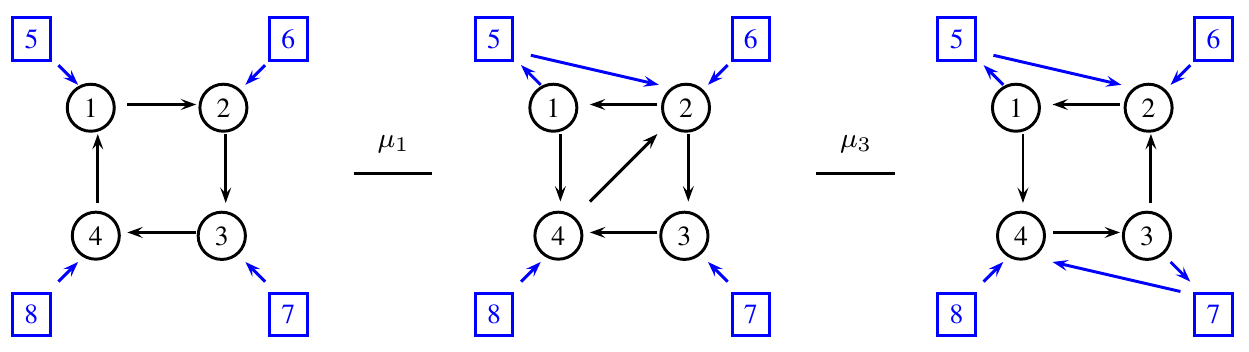}
  \end{center}
  \begin{align*}
    \mathbf{Y}= (Y_1,\,Y_2,\,Y_3,\,Y_4) \quad &\overset{\mu_1}{\line(1,0){20}}\quad
    \mathbf{Y}'= \left( \frac{1}{Y_1},\, Y_2Y_1,\, Y_3,\, Y_4(1+Y_1) \right)\\
    &\overset{\mu_3}{\line(1,0){20}}\quad
    \mathbf{Y}''= \left( \frac{1}{Y_1},\, Y_2 Y_1,\, \frac{1}{Y_3},\, Y_4Y_3 \right).
  \end{align*}
  
  In order to apply Theorem \ref{thm:YasF}, we also need to compute the cluster variables of the cluster algebra with principal coefficients of the same quiver.
  Let $(X_1,X_2,X_3,X_4)$ be the initial cluster tuple, we then get the following.
  \begin{align*}
    \mathbf{X} = (X_1,\,X_2,\,X_3,\,X_4) \quad &\overset{\mu_1}{\line(1,0){20}}\quad
    \mathbf{X}' = \left( \frac{Y_1X_4+X_2}{X_1} ,\,X_2,\,X_3,\,X_4 \right)\\
    &\overset{\mu_3}{\line(1,0){20}} \quad
    \mathbf{X}'' = \left( \frac{Y_1X_4+X_2}{X_1} ,\,X_2,\, \frac{Y_3X_2+X_4}{X_3},\,X_4 \right)
  \end{align*}
  We consider $\mathbf{Y}''_2$, $\mathbf{X}''_1$ and $\mathbf{X}''_3$ as functions on $X_i$'s and $Y_i$'s.
  By Theorem \ref{thm:YasF} we have
  \[
    y''_2 = \mathbf{Y}''_2(y_1,y_2,y_3,y_4) \frac{\mathbf{X}''_3(1,1,1,1;y_1,y_2,y_3,y_4)}{\mathbf{X}''_1(1,1,1,1;y_1,y_2,y_3,y_4)} = y_2y_1\frac{y_3+1}{y_1+1}.
  \]
  This is the same result as we computed directly in \eqref{eq:ypattern}.
\end{ex}


\subsection{Infinite rank cluster algebras}

We define infinite rank cluster algebras in a similar way. The cluster tuple, coefficient tuple and the exchange matrix now are infinite dimensional.
For the mutation to make sense, we only need the condition: For each $j,$ $b_{ij}=0$ for all but finitely many~$i.$
If~$B$ is also skew-symmetric, this condition is equivalent to saying that an infinite quiver $\mathcal{Q}_B$ has only finitely many arrows incident to each of its non-frozen vertex.

For the cluster pattern, although we think of it as an assignment from the infinite tree $\mathbb{T}$, we usually restrict the study to only those seeds obtainable from the initial seed by finitely many mutations.
In the next section, we will review the fact that the T-system can be realized as an infinite-rank coefficient-free cluster pattern, and then define the T-system with principal coefficients in the same way as we have already discussed cluster algebras with principal coefficients in this section.
We pick a specific cluster seed to put principal coefficients.
This choice generates a new recurrence relation which we will call the octahedron recurrence with principal coefficients.


\section{T-systems}\label{sec_T-system}

In this paper, we consider the $A_\infty$ T-system \cite{DFK13}, which is also known as the octahedron recurrence.
It is the infinite-rank version of $A_r$ T-system.
We will refer to $A_\infty$ T-system as just the T-system when there is no ambiguity.

The T-system can be realized as mutation in an infinite-rank coefficient-free cluster algebra of geometric type \cite{DFK09,DFK13}.
Its exchange matrix is skew-symmetric, so we can express it as a quiver, the octahedron quiver.
In this section, we review this connection and define T-systems with generic coefficients by inserting principal coefficient into the relation, as in Definition \ref{defn:principalcoef}, in the initial quiver.
We will also show that it is equivalent to the recurrence relation \eqref{eq:tsyscoef}.


\subsection{T-systems without coefficients}\label{subsec:Tsys}

Let $\Zodd = \{ (i,j,k)\in\Z^3 \mid i+j+k\equiv 1\bmod 2\}.$ The \defemph{T-system}, so called the \defemph{octahedron recurrence}, is a recurrence relation on the set of formal variables $\mathcal{T}=\{ T_{i,j,k} \mid (i,j,k)\in\Zodd\}$ defined by
\begin{align}
  T_{i,j,k-1}T_{i,j,k+1} = T_{i-1,j,k}T_{i+1,j,k} + T_{i,j-1,k}T_{i,j+1,k}.
  \label{eq:Tsys}
\end{align}

A \defemph{stepped surface} is a subset $\{(i,j,\k(i,j)) \mid  i,j\in\Z\}\subset\Zodd$ defined by a function $\k:\Z\times\Z\rightarrow\Z$ satisfying:
\[
  |\k(i,j)-\k(i',j')|=1 \quad \text{when} \quad |i-i'|+|j-j'|=1.
\]
We will also denote this surface by the function $\k$.
The condition $|i-i'|+|j-j'|=1$ is referred to as $(i,j)$ and $(i',j')$ are \defemph{lattice-adjacent}, and $\k(i,j)$ is called the \defemph{height} of $(i,j)$ with respect to $\k$.
There are three important stepped surfaces which we will use throughout the paper.
We define
\begin{align}
  \begin{aligned}
    \fund &:(i,j)\mapsto (i+j \bmod 2)-1,\\
    \proj_{(i',j',k')} &:(i,j)\mapsto k' - | i- i'| -| j- j'|,\\
    \k_{p} &:(i,j)\mapsto\min \left( \k(i,j), \proj_p(i,j) \right),
  \end{aligned}
  \label{eq:fun&topsurface}
\end{align}
and call them the \defemph{fundamental stepped surface}, the \defemph{stepped surface projected from a point} $(i',j',k')$ and the \defemph{adjusted stepped surface} associated with a surface $\k$ and a point $p$, respectively.
See Figure \ref{fig_surface} for examples.

To each $\mathbf{k},$ we can attach an \defemph{initial condition} $X_\mathbf{k}(\mathbf{t}):\{ T_{i,j,\k(i,j)}=t_{i,j} \mid i,j\in\Z\}$ for some formal variables $\mathbf{t}=\{ t_{i,j} \mid i,j\in\Z\}$, to which we refer as \defemph{initial data/values} along the stepped surface $\mathbf{k}$.

It is worth pointing out that for a point $(i_0,j_0,k_0)\in\Zodd$, not every initial data gives a finite solution to $T_{i_0,j_0,k_0}.$
In other words, an expression of $T_{i_0,j_0,k_0}$ in terms of $t_{i,j}$'s may not be finite.
We call an initial data on $\mathbf{k}$ that gives a finite expression for $T_{i_0,j_0,k_0}$ an \defemph{admissible initial data} with respect to $(i_0,j_0,k_0)$.

\begin{ex}
  The fundamental stepped surface is always admissible with respect to any point in $\Zodd.$
  The stepped surface $\proj_{(0,0,m)}$ is not admissible with respect to a point $(0,0,n)$ when $n>m$.
\end{ex}

\begin{figure}
  \begin{center}\includegraphics[scale=0.9]{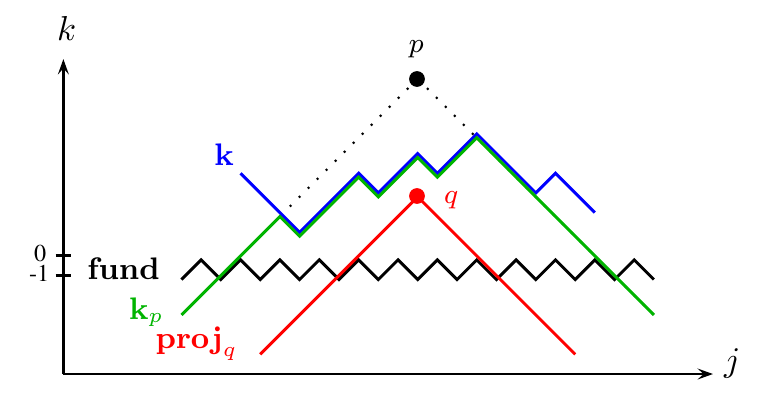}\end{center}
  \caption{
    The surfaces $\fund$, ${\proj_q}$ and $\k_p$ associated with a surface $\k$ in the section $i=i_0$ of the 3-dimensional lattice.}
  \label{fig_surface}
\end{figure}

The T-system can also be interpreted as an infinite-rank coefficient-free cluster algebra \cite{DFK09}.
Using $\mathbb{Z}^2$ as the index set, the initial seed is $(\mathbf{x},B) = \left( (x_{i,j})_{(i,j)\in\Z^2} , (b_{(i',j'),(i,j)}) \right)$ where 
\[
  x_{i,j}=T_{i,j,\fund(i,j)} \quad \text{and} \quad b_{(i',j'),(i,j)} = (-1)^{i+j}\left(\delta_{i',i\pm 1}\delta_{j',j}-\delta_{i',i}\delta_{j',j\pm 1}\right).
\]
The quiver $\mathcal{Q}_B$ associated with $B$ is shown in Figure \ref{fig:octquiver}.
We call it the \defemph{octahedron quiver}.
We embed the vertices of the quiver into the 3-dimensional lattice $\Zodd$ so that they lie on the fundamental stepped surface, i.e. the vertex $(i,j)$ of the octahedron quiver lies at the point $(i,j,\fund(i,j))\in\Zodd$.
The reason for picking this choice is to associate it to the index of the initial cluster variables $T_{i,j,\fund(i,j)}$ at $(i,j)$ for $(i,j)\in\Z^2$.

\begin{figure}
  \begin{center}\includegraphics{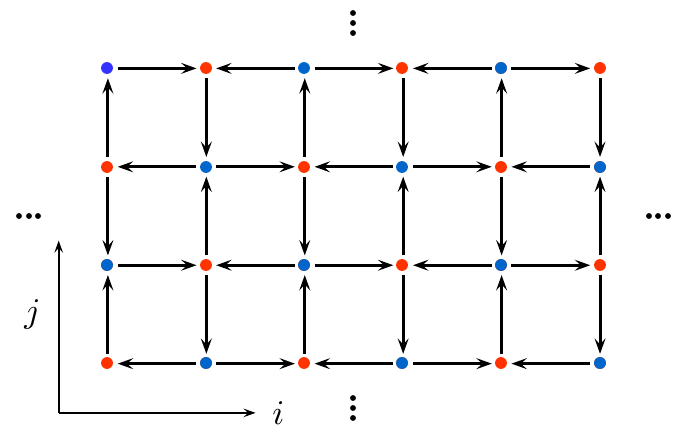}\end{center}
  \caption{
    The octahedron quiver.
    The red dots correspond to indices $(i,j)$ where $i+j$ is even, and the blue to the odd $i+j.$
    The quiver is infinite in both $i$ and $j$ directions.
  }
  \label{fig:octquiver}
\end{figure}

We then allow mutations only on vertices $(i,j,k)$ having the property that there are exactly two incoming and two outgoing arrows incident to $(i,j,k).$
This property is equivalent to saying that all four neighbors of $(i,j,k)$ have the same third coordinate in $\Zodd$, i.e., the four neighbors are all either $(i\pm 1,j\pm 1,k-1)$ or $(i\pm 1,j\pm 1,k+1).$

If the neighbors are $(i\pm 1,j\pm 1,k-1)$, after the mutation at $(i,j,k)$, we move the vertex that used to be at $(i,j,k)$ to $(i,j,k-2)$ and call the new cluster variable obtained by the mutation $T_{i,j,k-2}.$
We call this mutation a \defemph{downward mutation}.
On the other hand, when the neighbors are $(i\pm 1,j\pm 1,k+1)$, $(i,j,k)$ is moved to $(i,j,k+2)$ and the new cluster variable is called $T_{i,j,k+2}$.
We call it an \defemph{upward mutation}.
  
The set of vertices of a quiver $\mathcal{Q}$ obtained from the octahedron quiver by allowed mutations forms a stepped surface, denoted by $\mathbf{k}_\mathcal{Q}$.
On the other hand, we can create a quiver from a stepped surface by reading the arrangement of the quiver arrows from Table \ref{tab:quiver&k}, and call this quiver $\mathcal{Q}_\mathbf{k}.$
We notice that the quiver mutation at $(i,j)$ corresponds to moving $(i,j,\k(i,j))$ to $(i,j,\k(i,j)\pm 2)$, depending on the height of its neighbors as discussed above.
We say that $\k'$ is obtained from $\k$ by a \defemph{mutation} at $(i,j)$ if $\Q_{\k'}=\mu_{(i,j)}(\Q_{\k})$.


\subsection{T-systems with principal coefficients}

We define the \defemph{T-systems with principal coefficients} from the cluster algebra setting.
Instead of the coefficient-free cluster algebra with the octahedron quiver, we consider the cluster algebra with principal coefficients (Definition \ref{defn:principalcoef}) on the same quiver, where the initial coefficient at $(i,j)$ is $c_{i,j}.$
Due to Remark \ref{rem:PrincCoefFrozen}, it is the same as the coefficient-free cluster algebras on the coframed octahedron quiver, where the variables $c_{i,j}$ on the added vertices are frozen, see Figure \ref{fig_framedocta}.
We show that the cluster variables satisfy the recurrence relation \eqref{eq:tsyscoef} on $\{ T_{i,j,k} \mid (i,j,k)\in\Zodd \}$ with an extra set of coefficients $\{c_{i,j} \mid (i,j)\in\Z^2\}.$
We will use this recursion as an alternative definition of the T-system with principal coefficients.
  
\begin{figure}
  \begin{center}\includegraphics[scale=0.8]{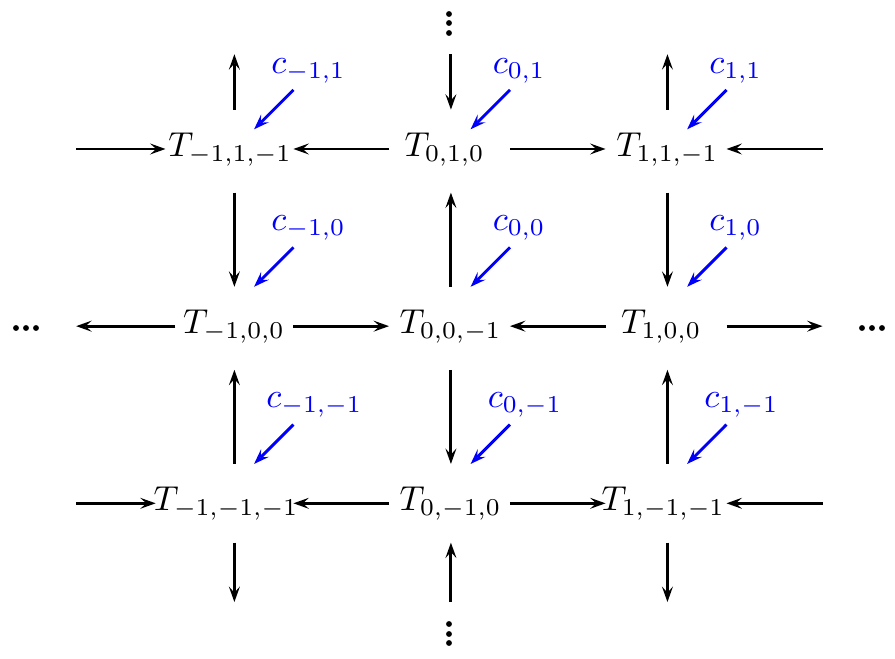}\end{center}
  \caption{A portion of the infinite framed octahedron quiver and its cluster variables including frozen variables $c_{i,j}$.}
  \label{fig_framedocta}
\end{figure}

\begin{thm}\label{thm:Tsyscoef}
  Let $\{ T_{i,j,k} \mid (i,j,k)\in\Zodd \}$ be the set of cluster variables obtained from the T-system with principal coefficients.
  Then
  \begin{align}
    T_{i,j,k-1}T_{i,j,k+1} = J_{i,j,k} T_{i-1,j,k}T_{i+1,j,k} + I_{i,j,k}T_{i,j-1,k}T_{i,j+1,k} 
    \label{eq:tsyscoef}
  \end{align}
  where
  \begin{align}
    I_{i,j,k} =
    \begin{cases}
      \prod_{a=k+1}^{-(k+1)} c_{i+a,j}, & k<0,\\
      1, & k\geq 0,
    \end{cases} \quad \text{and}\quad
    J_{i,j,k} =
    \begin{cases}
      1, &k<0,\\
      \prod_{a=-k}^k c_{i,j+a}, &k\geq 0.
    \end{cases}
    \label{eq:IJ}
  \end{align}
\end{thm}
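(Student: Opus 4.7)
The plan is to use Remark~\ref{rem:PrincCoefFrozen} to realize the T-system with principal coefficients as a coefficient-free cluster algebra on the coframed octahedron quiver, with the $c_{i,j}$ attached as frozen variables via the initial arrows $c_{i,j}\to(i,j,\fund(i,j))$. Every cluster variable $T_{i,j,k}$ is then produced by the standard exchange formula at some mutable vertex, so the theorem reduces to the bookkeeping claim that at each such mutation the four cluster arrows contribute the two octahedral pairs in~\eqref{eq:tsyscoef} and the products of incoming and outgoing frozen arrows are exactly $J_{i,j,k}$ and $I_{i,j,k}$ as defined in~\eqref{eq:IJ}.

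First I would dispatch the base case at $\fund$. The coframed quiver is given explicitly: a vertex $(i,j)$ with $i+j$ even sits at a valley of $\fund$ with incoming set $\{(i\pm1,j),\,c_{i,j}\}$ and outgoing set $\{(i,j\pm1)\}$, and the exchange formula immediately yields
\[
T_{i,j,-1}\,T_{i,j,1}\;=\;c_{i,j}\,T_{i-1,j,0}\,T_{i+1,j,0}\;+\;T_{i,j-1,0}\,T_{i,j+1,0},
\]
matching~\eqref{eq:tsyscoef} at $k=0$ since $J_{i,j,0}=c_{i,j}$ and $I_{i,j,0}=1$; the peak case ($i+j$ odd) is dual and gives~\eqref{eq:tsyscoef} at $k=-1$.

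For a general $(i_0,j_0,k_0)\in\Zodd$ I proceed by induction on $|k_0|$. In the upward case, I fix a sequence of mutations from $\fund$ that first raises the four lattice neighbors $(i_0\pm1,j_0)$ and $(i_0,j_0\pm1)$ to the common height $k_0-1$ (legal by the parity conservation $k\equiv\fund(i,j)\bmod 2$), and then performs the final mutation at $(i_0,j_0,k_0-2)$. Two things must be checked at this final vertex: (i) the four cluster arrows are exactly the octahedral neighbors at height $k_0-1$, split $2$-and-$2$ between incoming and outgoing; and (ii) the incoming and outgoing frozen products equal $J_{i_0,j_0,k_0-1}$ and $I_{i_0,j_0,k_0-1}$ respectively. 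Item~(i) is the coefficient-free content of~\cite{DFK09}: mutations of the octahedron quiver preserve its local octahedral arrow pattern at any mutable vertex. The downward case is dual.

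The main obstacle is item~(ii), the combinatorial tracking of the frozen arrows. Each intermediate mutation at a neighbor introduces, via step~(1) of the quiver mutation rule, new arrows from $c$-vertices attached to that neighbor into the nearby cluster vertices; many of these are immediately annihilated as oriented $2$-cycles (step~(3)) against arrows added in previous mutations. I have verified in small cases (e.g.\ computing $T_{1,0,2}$ after the mutations at $(0,0),(2,0),(1,\pm1)$) that after all cancellations the incoming frozen arrows at $(i_0,j_0)$ form exactly the row $c_{i_0,j_0-k_0+1},\ldots,c_{i_0,j_0+k_0-1}$, whose product is $J_{i_0,j_0,k_0-1}$, while the outgoing frozen side is empty. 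To make the induction go through uniformly I would formulate an explicit description of the coframed quiver $\overline{\mathcal{Q}}_\k$ attached to any stepped surface $\k$ reachable from $\fund$, extending the quiver--surface correspondence of Section~\ref{subsec:Tsys} to the frozen vertices; the exchange relation at the final mutation then collapses in one step to~\eqref{eq:tsyscoef}.
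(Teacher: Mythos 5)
Your overall strategy is the same as the paper's: reduce \eqref{eq:tsyscoef} to the exchange relation at a single mutation at $(i,j,k-1)$ on a surface $\k$ with $\k(i\pm 1,j\pm 1)=k$, and verify that the coefficient there (equivalently, the configuration of frozen arrows at $(i,j)$ in the coframed quiver, via Remark~\ref{rem:PrincCoefFrozen}) equals $J_{i,j,k}/I_{i,j,k}$. The base case at $\fund$ is handled correctly. But the proof has a genuine gap exactly where you flag it: the statement ``I would formulate an explicit description of the coframed quiver $\overline{\mathcal{Q}}_\k$ for any reachable $\k$'' is not a proof of that description, and that description \emph{is} the mathematical content of the theorem. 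The paper supplies it as Proposition~\ref{prop:coefatvertex}: the coefficient at $(i,j)$ on $\k$ is
\[
  y_{(i,j),\k} = \frac{ I_{i,j,k-1}\, J_{i,j-1,k}^{[\epsilon_1]_+}\, J_{i,j+1,k}^{[\epsilon_2]_+} }{ J_{i,j,k-1}\, I_{i-1,j,k}^{[\epsilon_3]_+}\, I_{i+1,j,k}^{[\epsilon_4]_+}},
\]
where the $\epsilon_\ell\in\{-1,1\}$ record the heights of the four lattice neighbors, and this is proved by induction on the number of mutations from $\fund$ using the Y-seed mutation rule. Small-case verification (your $T_{1,0,2}$ computation) does not substitute for this, because the inductive step requires knowing the coefficient at \emph{every} vertex of the surface, including vertices whose four neighbors sit at mixed heights --- your proposal only describes the frozen arrows at the fully mutable vertex being exchanged, which is not a closed invariant under mutation: mutating at $(i,j)$ alters the coefficients at all four neighbors $(i\pm1,j\pm1)$, and those neighbors are generically in a mixed-height configuration.

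To close the gap you must (a) state the invariant for an arbitrary local height pattern, with the $[\pm\epsilon_\ell]_+$ exponents or an equivalent arrow-counting formulation on the coframed quiver, and (b) check that a single mutation at $(i,j)$ preserves it at $(i,j)$ (where the coefficient inverts) and at each of the four neighbors (where it picks up or loses exactly the factor $J_{i,j,k-1}$ or $I_{i,j,k-1}$ predicted by the change of sign of the relevant $\epsilon$). Both upward and downward mutations must be treated, since surfaces reachable from $\fund$ require both. Once that proposition is in place, your final step --- reading off \eqref{eq:tsyscoef} from the exchange formula with $y^{+}=J_{i,j,k}$ and $y^{-}=I_{i,j,k}$ (using that in $\Trop(c_{i,j})$ one has $y\oplus 1 = I_{i,j,k}^{-1}$ since $I$ and $J$ are never simultaneously nontrivial) --- is correct and matches the paper.
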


We call the relation \eqref{eq:tsyscoef} the \defemph{octahedron recurrence with principal coefficients}.
The pictorial representation of $I$ and $J$ are shown in Figure \ref{fig_IJ}.

In order to prove Theorem \ref{thm:Tsyscoef}, we first compute the coefficients at the vertices in any quiver obtained by the octahedron quiver.
We note that unlike cluster variables, a coefficient at $(i,j)$ on a stepped surface $\k$ depends on the height of $(i,j)$ and its neighbors $(i\pm 1,j\pm 1)$.

\begin{prop}\label{prop:coefatvertex}
  Consider the T-system with principal coefficients.
  Let $\k$ be a stepped surface obtained from $\fund$ by a finite number of allowed mutations.
  Let $y_{(i,j),\k}$ be the coefficient at the vertex $(i,j)$ in $\Q_\k$.
  Then
  \begin{align}
    y_{(i,j),\k} = \frac{ I_{i,j,k-1} J_{i,j-1,k}^{[\epsilon_1]_+} J_{i,j+1,k}^{[\epsilon_2]_+} }{ J_{i,j,k-1} I_{i-1,j,k}^{[\epsilon_3]_+} I_{i+1,j,k}^{[\epsilon_4]_+}} = \frac{ J_{i,j,k+1}I_{i-1,j,k}^{[-\epsilon_3]_+} I_{i+1,j,k}^{[-\epsilon_4]_+}}{ I_{i,j,k+1}J_{i,j-1,k}^{[-\epsilon_1]_+} J_{i,j+1,k}^{[-\epsilon_2]_+}},
    \label{eq:coefatvertex}
  \end{align}
  when $\k(i,j)=k$, $\k(i,j-1)=k+\epsilon_1$, $\k(i,j+1)=k+\epsilon_2$, $\k(i-1,j)=k+\epsilon_3$ and $\k(i+1,j)=k+\epsilon_4$ where $ \epsilon_\ell\in\{-1,1\}$, as described as follows:
  \[
    \begin{matrix}
      & (i,j+1,k+\epsilon_2) &\\\\
      (i-1,j,k+\epsilon_3) & (i,j,k) & (i+1,j,k+\epsilon_4)\\\\
      & (i,j-1,k+\epsilon_1) &
    \end{matrix}
  \]
\end{prop}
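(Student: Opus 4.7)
I would prove \eqref{eq:coefatvertex} by induction on the minimal number of allowed mutations needed to reach $\k$ from $\fund$.

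For the base case $\k = \fund$, split on the parity of $i + j$. If $i + j$ is even then $\fund(i, j) = -1$ and all four lattice-neighbors sit at height $0$, so every $\epsilon_\ell = 1$; using \eqref{eq:IJ}, all $J$ factors trivialize, and the ratio reduces to $I_{i, j, -2}/(I_{i-1, j, -1} I_{i+1, j, -1}) = c_{i-1, j} c_{i, j} c_{i+1, j}/(c_{i-1, j} c_{i+1, j}) = c_{i, j}$. If $i + j$ is odd, all $\epsilon_\ell = -1$ and the formula collapses to $I_{i, j, -1}/J_{i, j, -1} = c_{i, j}$. In both cases the answer is the initial principal coefficient from Definition \ref{defn:principalcoef}.

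For the inductive step, assume \eqref{eq:coefatvertex} holds on $\k$ and let $\k'$ arise by a downward mutation at $(i_0, j_0)$ (the upward case is symmetric). By Definition \ref{def:mutation} only the coefficients at $(i_0, j_0)$ and its four lattice-neighbors change, so I verify the formula at each of those five vertices in $\k'$. At the mutating vertex I use $y_{(i_0, j_0), \k'} = y_{(i_0, j_0), \k}^{-1}$: the right side is $J_{i_0, j_0, k-1}/I_{i_0, j_0, k-1}$ by Form~1 of the inductive hypothesis with $k$ and $\epsilon_\ell = -1$, while Form~1 applied to $\k'$ at $k' = k - 2$ with $\epsilon'_\ell = 1$ gives a ratio of shifted $I$'s and $J$'s, and equality of the two reduces to the telescoping identity
\[
I_{i, j, k-1} I_{i, j, k+1} J_{i, j-1, k} J_{i, j+1, k} = J_{i, j, k-1} J_{i, j, k+1} I_{i-1, j, k} I_{i+1, j, k},
\]
which follows directly from \eqref{eq:IJ} and, incidentally, shows that Form~1 and Form~2 in \eqref{eq:coefatvertex} always coincide. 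At a lattice-neighbor, say $(i_0 + 1, j_0)$, only the $\epsilon$-index paired with $(i_0, j_0)$ flips between $\k$ and $\k'$ (from $+1$ to $-1$), so Form~1 shows the ratio $y_{(i_0+1, j_0), \k'}/y_{(i_0+1, j_0), \k}$ equals $I_{i_0, j_0, k-1}$. The $Y$-mutation rule expresses the same ratio as $(y_{(i_0, j_0), \k}^+)^{[b]_+}(y_{(i_0, j_0), \k}^-)^{-[-b]_+}$ with $b = b^{(\k)}_{(i_0, j_0), (i_0+1, j_0)}$; since $(i_0, j_0)$ is locally maximal, its quiver (from Table \ref{tab:quiver&k}) has outgoing arrows in the $i$-direction and incoming ones in the $j$-direction, so $b = +1$ at $i$-neighbors and $b = -1$ at $j$-neighbors. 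Because $y_{(i_0, j_0), \k} = I_{i_0, j_0, k-1}/J_{i_0, j_0, k-1}$ is a Laurent monomial whose exponents all share one sign (positive for $k \leq 0$, negative for $k \geq 1$), its tropical decomposition is transparent: $y^+ = I_{i_0, j_0, k-1}$ and $y^- = J_{i_0, j_0, k-1}$. The mutation factor then comes out to $I_{i_0, j_0, k-1}$ at $i$-neighbors and $1/J_{i_0, j_0, k-1}$ at $j$-neighbors, matching in each case the ratio produced by the $\epsilon$-flip.

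The main obstacle is the case analysis — two mutation directions, four neighbor positions, and two sign sub-cases for the tropical split of $y_{(i_0, j_0), \k}$ — but each case collapses to the same telescoping identity, so the work is bookkeeping rather than structural. The only subtlety is aligning the quiver orientation at a locally maximal versus locally minimal vertex with the sign of the exponent of $y_{(i_0, j_0), \k}$, which controls whether $I$ or $J$ enters the mutation factor at each neighbor.
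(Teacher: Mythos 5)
Your proposal is correct and takes essentially the same route as the paper: induction on the number of mutations from $\fund$, base case split by the parity of $i+j$, the two forms of \eqref{eq:coefatvertex} reconciled via the same telescoping identity in $I$ and $J$, and the five affected coefficients checked against the tropical $Y$-mutation rule. Your identification $y_{(i_0,j_0),\k}^+ = I_{i_0,j_0,k-1}$ and $y_{(i_0,j_0),\k}^- = J_{i_0,j_0,k-1}$ is just a cleaner packaging of the computation of $1\oplus y_{(i_0,j_0),\k}$ that the paper carries out explicitly in its two cases.
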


\begin{ex}
  Consider a stepped surface $\k$ having height as the following.
  \[
    \begin{matrix}
      & (i,j+1,k+1) & \\\\
      (i-1,j,k+1) & (i,j,k) & (i+1,j,k-1) \\\\
      & (i,j-1,k-1) & 
    \end{matrix}
  \]
  The coefficient at the vertex $(i,j)$, $y_{(i,j),\k}$, computed by Proposition \ref{prop:coefatvertex} is
  \[
    y_{(i,j),\k}=\frac{ I_{i,j,k-1} J_{i,j+1,k} }{ J_{i,j,k-1} I_{i-1,j,k}} = \frac{ J_{i,j,k+1}I_{i+1,j,k}}{ I_{i,j,k+1}J_{i,j-1,k}}.
  \]
\end{ex}

\begin{proof}[Proof of Proposition \ref{prop:coefatvertex}]
  We fist show the second equality in \eqref{eq:coefatvertex}.
  Notice that $[\epsilon_i]_+ +[-\epsilon_i]_+ = 1$ for all $i$.
  So we only need to show
  \[
    \frac{ I_{i,j,k-1} J_{i,j-1,k} J_{i,j+1,k} }{ J_{i,j,k-1} I_{i-1,j,k} I_{i+1,j,k}} = \frac{ J_{i,j,k+1}}{ I_{i,j,k+1}},
  \]
  which can be easily derived from the definition of $I$ and $J$ in \eqref{eq:IJ}.
  
  We will then prove the proposition by induction on the number of mutations from the fundamental stepped surface.
  On $\fund$, the vertices are in the forms $(i,j,-1)$ or $(i,j,0)$ depending on the parity of $i+j$, and $y_{(i,j),\fund}=c_{i,j}$ for all $(i,j)$.
  When $i+j\equiv 0\bmod 2$, $\fund(i,j)=-1$ and the neighbors of $(i,j,-1)$ are $(i\pm 1,j\pm 1,0)$.
  So $\epsilon_\ell=1$ for all $\ell$ at $(i,j)$.
  We also have
  \[
    y_{(i,j),\k} = c_{i,j} = \frac{ J_{i,j,0}}{ I_{i,j,0}}.
  \]
  When $i+j\equiv 1\bmod 2$, $\fund(i,j)=0$ and the neighbors of $(i,j,0)$ are $(i\pm 1,j\pm 1,-1)$.
  So $\epsilon_\ell= -1$ for all $\ell$.
  We have
  \[
    y_{(i,j),\k} = c_{i,j} = \frac{ I_{i,j,-1}}{ J_{i,j,-1}}.
  \]
  Hence the proposition holds for the fundamental stepped surface.
  
  Next we assume that the proposition holds for a stepped surface $\k$.
  Consider a stepped surface $\k'$ obtained from $\k$ by a mutation at $(i,j)$.
  Then $\k=\k'$ on every point except at $(i,j)$.
  Also $y_{(a,b),\k}=y_{(a,b),\k'}$ for all $(a,b)$ but at most five points: $(i,j)$, $(i\pm 1, j\pm 1,)$.
  So we only need to consider the coefficients at these five points.

  Let us assume that $\k(i,j)=k$.
  Since $\k$ is mutable at $(i,j)$, we have two cases: $\k(i\pm 1,j\pm 1)=k-1$ or $\k(i\pm 1,j\pm 1) =k+ 1$, as shown in the following pictures.
  \[
    \scalebox{0.7}{
      $\xymatrix@=1em{
        & (i,j+1,k-1)\ar[d] & \\
        (i-1,j,k-1) & \ar[l](i,j,k)\ar[r] & (i+1,j,k-1) \\
        & (i,j-1,k-1)\ar[u] &.
      }
      \hspace{2em}
      \xymatrix@=1em{
        & (i,j+1,k+1) & \\
        (i-1,j,k+1)\ar[r] & (i,j,k)\ar[u]\ar[d] & \ar[l](i+1,j,k+1)\\
        & (i,j-1,k+1) &.
      }$
    }
  \]
  
  \begin{description}
  \item[Case 1]
    We know that $y_{(i,j),\k} = I_{i,j,k-1}/J_{i,j,k-1}$ by the induction hypothesis.
    After the mutation at $(i,j)$, the point $(i,j,k)$ becomes $(i,j,k-2)$.
    So on $\k'$, $\epsilon_i = 1$ for all $i$.
    We also get
    \[
      y_{(i,j),\k'} = \left(\frac{ I_{i,j,k-1}}{ J_{i,j,k-1}}\right)^{-1}=\frac{ J_{i,j,k'+1}}{ I_{i,j,k'+1}} = \frac{ J_{i,j,k'+1}I_{i-1,j,k'}^{[-1]_+} I_{i+1,j,k'}^{[-1]_+}}{ I_{i,j,k'+1}J_{i,j-1,k'}^{[-1]_+} J_{i,j+1,k'}^{[-1]_+}}
    \]
    where $k'=k-2$.
    Hence the expression of $y_{(i,j),\k'}$ agrees with the proposition.
    
    At $(i,j+1,k-1)$, the induction hypothesis gives
    \begin{align*}
      y_{(i,j+1),\k} &= \frac{ I_{i,j+1,k-2} J_{i,j,k-1}^{[1]_+} J_{i,j+2,k-1}^{[\epsilon_2]_+} }{ J_{i,j+1,k-2} I_{i-1,j+1,k-1}^{[\epsilon_3]_+} I_{i+1,j+1,k-1}^{[\epsilon_4]_+}}\\
      &= \frac{ I_{i,j+1,k-2} J_{i,j,k-1} J_{i,j+2,k-1}^{[\epsilon_2]_+} }{ J_{i,j+1,k-2} I_{i-1,j+1,k-1}^{[\epsilon_3]_+} I_{i+1,j+1,k-1}^{[\epsilon_4]_+}}.
    \end{align*}
    We know that $\epsilon_1=1$ since $\k(i,j)= k =\k(i,j+1)+1$.
    Then the mutation at $(i,j)$ gives
    \begin{align*}
      y_{(i,j+1),\k'} &= y_{(i,j+1),\k} (1\oplus y_{(i,j),\k})\\
      &= y_{(i,j+1),\k}\frac{1}{J_{i,j,k-1}}\\
      &= \frac{ I_{i,j+1,k-2} J_{i,j,k-1}^{[-1]_+} J_{i,j+2,k-1}^{[\epsilon_2]_+} }{ J_{i,j+1,k-2} I_{i-1,j+1,k-1}^{[\epsilon_3]_+} I_{i+1,j+1,k-1}^{[\epsilon_4]_+}},
    \end{align*}
    which agrees to the proposition.
    By the similar argument, we can show that all four of the $y_{(i\pm 1,j\pm 1),\k'}$ agree to the proposition.
  \item[Case 2]
  We know that $y_{(i,j),\k} = J_{i,j,k+1}/I_{i,j,k+1}$ by the induction hypothesis.
  After the mutation at $(i,j)$, the point $(i,j,k)$ becomes $(i,j,k+2)$.
  So on $\k'$, $\epsilon_i = -1$ for all $i$.
  We also get
    \[
      y_{(i,j),\k'} = \left(\frac{ J_{i,j,k+1}}{ I_{i,j,k+1}}\right)^{-1}=\frac{ I_{i,j,k'-1}}{ J_{i,j,k'-1}} = \frac{ I_{i,j,k'-1}J_{i,j-1,k'}^{[-1]_+} J_{i,j+1,k'}^{[-1]_+}}{ J_{i,j,k'-1}I_{i-1,j,k'}^{[-1]_+} I_{i+1,j,k'}^{[-1]_+}}
    \]
    when $k'=k+2$.
    Hence the expression of $y_{(i,j),\k'}$ agrees with the proposition.
    
    At $(i,j+1,k-1)$, the induction hypothesis gives
    \begin{align*}
      y_{(i,j+1),\k} &= \frac{ I_{i,j+1,k-2} J_{i,j,k-1}^{[-1]_+} J_{i,j+2,k-1}^{[\epsilon_2]_+} }{ J_{i,j+1,k-2} I_{i-1,j+1,k-1}^{[\epsilon_3]_+} I_{i+1,j+1,k-1}^{[\epsilon_4]_+}}\\
      &= \frac{ I_{i,j+1,k-2} J_{i,j,k-1} J_{i,j+2,k-1}^{[\epsilon_2]_+} }{ J_{i,j+1,k-2} I_{i-1,j+1,k-1}^{[\epsilon_3]_+} I_{i+1,j+1,k-1}^{[\epsilon_4]_+}}.
    \end{align*}
    We know that $\epsilon_1=-1$ since $\k(i,j)= k =\k(i,j+1)-1$.
    Then the mutation at $(i,j)$ gives
    \begin{align*}
      y_{(i,j+1),\k'} &= y_{(i,j+1),\k} \frac{y_{(i,j),\k}}{(1\oplus y_{(i,j),\k})}\\
      &= y_{(i,j+1),\k}J_{i,j,k-1}\\
      &= \frac{ I_{i,j+1,k-2} J_{i,j,k-1}^{[1]_+} J_{i,j+2,k-1}^{[\epsilon_2]_+} }{ J_{i,j+1,k-2} I_{i-1,j+1,k-1}^{[\epsilon_3]_+} I_{i+1,j+1,k-1}^{[\epsilon_4]_+}},
    \end{align*}
    which agrees to the proposition.
    By the similar argument, we can show that all four of the $y_{(i\pm 1,j\pm 1),\k'}$ agree to the proposition.
  \end{description}
  By both cases, we proved the proposition.
\end{proof}

\begin{proof}[Proof of Theorem \ref{thm:Tsyscoef}]
  To show \eqref{eq:tsyscoef}, it is enough to show that it is the mutation rule at $(i,j,k-1)$ on $\k$ such that $\k(i\pm 1,j\pm 1)=k$ and $\k(i,j)=k-1$.
  The quiver at $(i,j)$ will look like the following:
  \[
    \scalebox{0.8}{$
      \xymatrix@=1em{
        & (i,j+1,k) & \\
        (i-1,j,k)\ar[r] & (i,j,k-1)\ar[u]\ar[d] & \ar[l](i+1,j,k) \\
        & (i,j-1,k) &
      }
    $}
  \]
  So it is equivalent to show that $y_{(i,j),\k} = J_{i,j,k}/I_{i,j,k}$, which comes from Proposition \ref{prop:coefatvertex}.
\end{proof}

\begin{figure}
  \begin{center}\includegraphics{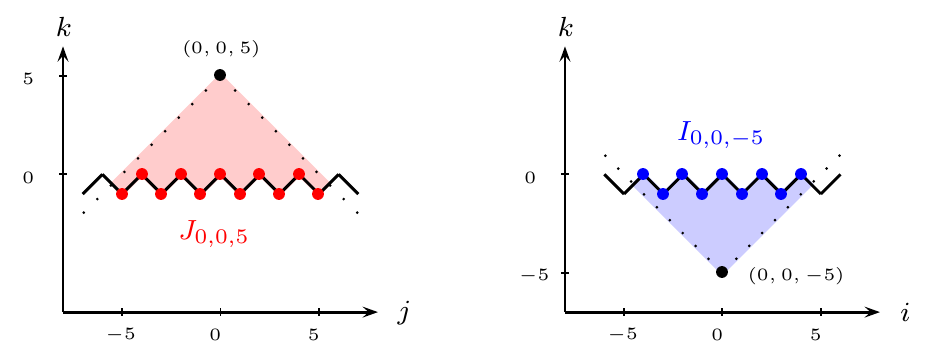}\end{center}
  \caption{
    $J_{0,0,5} = c_{0,-5}c_{0,-4}\dots c_{0,5}$ can be realized as a shadow shaded down from $(0,0,5)$ as depicted by the red dots in the section $i=0$.
    $I_{0,0,-5} = c_{-4,0}c_{-3,0}\dots c_{4,0}$ can be realized as a shadow shaded up from $(0,0,-5)$ as depicted by the blue dots in the section $j=0$.
    }
  \label{fig_IJ}
\end{figure}

Due to Theorem \ref{thm:Tsyscoef}, we view the T-system with principal coefficients as a recurrence relation on $T_{i,j,k}$, $(i,j,k)\in\Zodd$ with extra coefficient variables $c_{i,j}$, $(i,j)\in\Z^2.$
Fixing a point $p=(i_0,j_0,k_0)\in\Zodd$ and an admissible initial data on a stepped surface $\mathbf{k}$, Theorem \ref{thm:Laurent} guarantees that the expression of $T_{i_0,j_0,k_0}$ is a Laurent polynomial in the initial data $\{ t_{i,j}=T_{i,j,\k(i,j)}\mid (i,j)\in\Z^2 \}$ and coefficients $\{c_{i,j}\mid (i,j)\in\Z^2 \}$.
The goal is to give combinatorial interpretation for this expression.
In this paper, we study the case when $p$ is above the $\k$ and $\k$ is above $\fund$, i.e.
\begin{align}\label{eq:surfacecond}
  k_0 \geq \k(i_0,j_0)\quad\text{and}\quad\k(i,j) \geq \fund(i,j) = (i+j\bmod 2)-1.
\end{align}
In this case, we have explicit combinatorial solutions in terms of perfect matchings in Sections \ref{sec_Dimer} and \ref{sec_edge-weight}, non-intersecting paths in Section \ref{sec_path} and networks in Section \ref{sec_network}.


\section{Perfect-matching solution}\label{sec_Dimer}

The goal of this section is to give an expression of $T_{i_0,j_0,k_0}$ in terms of a partition function of weighted perfect matchings of a certain graph.
There are previous works \cite{Speyer,MS10,JMZ13} on expressing cluster variables by using perfect matchings of certain weighted graphs.
Regarding only cluster variables, the weight studied in \cite{Speyer} coincides with the ``face-weight'' in Definition \ref{defn:faceweight}, while the weight in \cite{MS10,JMZ13} coincides with the ``edge-weight'' in Definition \ref{defn:edgeweight}.


\subsection{Graphs from stepped surfaces}\label{subsec_graph}

We fix a point $p=(i_0,j_0,k_0)\in\Zodd$, an admissible stepped surface $\k$ and an initial data $X_\mathbf{k}(\mathbf{t}):\{ T_{i,j,\k(i,j)}=t_{i,j} \mid i,j\in\Z\}$ on $\mathbf{k}$.
Also assume that $k_0 \geq \k(i_0,j_0)$ and $\k \geq \fund$, i.e. $\k(i,j)\geq\fund(i,j)$ for all $(i,j)\in\Z^2$.

From the stepped surface $\mathbf{k}$, we follow the construction in \cite{Speyer} and define, using Table \ref{tab:quiver&k}, an infinite bipartite graph $G_\k$ associated with $\k$.
This graph can also be realized as the dual of the quiver $\Q_\k$ associated with $\k$ with vertex bi-coloring, see the end of Section \ref{subsec:Tsys}.
Faces of $\mathcal{Q}_{\k}$ become vertices of $G_\k.$
Since all faces of $\Q_k$ are always oriented, we color a vertex of the graph in white if the arrows around its corresponding face of the quiver are oriented counter-clockwise and black if they are oriented clockwise.
Vertices of $\mathcal{Q}_\k$ become faces of $G_\mathbf{k}$.
Since the vertices of the quiver are indexed by $\Z^2$, we will use $(i,j)\in\Z^2$ to represent a face of the graph.
Arrows of $\Q_\k$ gives edges of $G_\k$.
There are three types of edges in the graphs: horizontal, vertical and diagonal, which came from vertical, horizontal and diagonal arrows of the quiver, respectively.
See Figure \ref{fig:3edges} for an example.

\begin{table}
  \begin{center}{\begin{tabular}{|c|c|c|}
\hline
  \quad
  $\begin{matrix}
  \k(D)	&	\k(C) \\
  \k(A) & \k(B)
  \end{matrix}$
  \quad
&
  \quad A part in $\mathcal{Q}_\k$ \quad
& 
  \quad A part in $G_\mathbf{k}$ \quad
\\[2ex]
\hline 
  $\begin{matrix}
  k	&	k+1 \\
  k+1   &k
  \end{matrix}$
&
  $\begin{matrix}
  D		& \leftarrow &	C \\
  \downarrow	& 						&	\uparrow		\\
  A			& \rightarrow 	&	B
  \end{matrix}$
&
  \raisebox{-.5\height}{\includegraphics[scale=0.8]{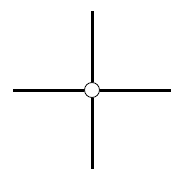}}
\\
\hline
  $\begin{matrix}
  k+1 & k\\
  k & k+1
  \end{matrix}$
&
  $\begin{matrix}
  D		&	\rightarrow	&	C \\
  \uparrow&							& \downarrow\\
  A			&	\leftarrow	&	B
  \end{matrix}$
&
  \raisebox{-.5\height}{\includegraphics[scale=0.8]{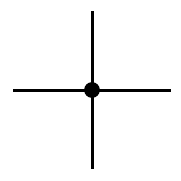}}
\\
\hline
  $\begin{matrix}
  k-1 & k \\ k &k+1 
  \end{matrix}$
&
  $\begin{matrix}
  D		& \leftarrow &	C \\
  \downarrow	& \nearrow		&	\downarrow		\\
  A			& \leftarrow 	&	B
  \end{matrix}$
&
  \raisebox{-.5\height}{\includegraphics[scale=0.8]{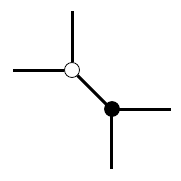}}
\\
\hline
	$\begin{matrix}
	k+1 & k \\ k & k-1
	\end{matrix}$
&
	$\begin{matrix}
	D		& \rightarrow &	C \\
	\uparrow	& \swarrow		&	\uparrow		\\
	A			& \rightarrow 	&	B
	\end{matrix}$
&
	\raisebox{-.5\height}{\includegraphics[scale=0.8]{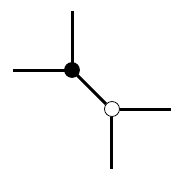}}
\\
\hline
	$\begin{matrix}
	k & k+1 \\ k-1 & k
	\end{matrix}$
&
	$\begin{matrix}
	D		& \leftarrow &	C \\
	\uparrow	& \searrow		&	\uparrow		\\
	A			& \leftarrow 	&	B
	\end{matrix}$
&
	\raisebox{-.5\height}{\includegraphics[scale=0.8]{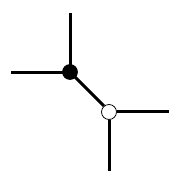}}
\\
\hline
	$\begin{matrix}
	k & k-1 \\ k+1 & k
	\end{matrix}$
&
	$\begin{matrix}
	D		& \rightarrow &	C \\
	\downarrow	& \nwarrow		&	\downarrow		\\
	A			& \rightarrow 	&	B
	\end{matrix}$
&
	\raisebox{-.5\height}{\includegraphics[scale=0.8]{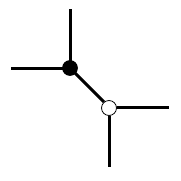}}
\\
\hline
\end{tabular}}\end{center}\bigskip
  \caption{All six local pictures of $\mathcal{Q}_\k$ and $G_\k$ for four points $A=(i,j),$ $B=(i+1,j),$ $C=(i+1,j+1),$ and $D=(i,j+1)$ on a stepped surface $\k$.}
  \label{tab:quiver&k}
\end{table}

\begin{figure}
  \centering
  \begin{subfigure}[b]{0.3\textwidth}
    \centering 
    $\begin{matrix} 0 && 1 && 2 &&3 \\\\ 1 && 2 && 1 && 2 \\\\ 2& & 1 && 0 && 1 \\\\ 1 && 2 && 1 && 2 \end{matrix}$
    \caption{A portion of $\mathbf{k}$}
  \end{subfigure}
  \begin{subfigure}[b]{0.3\textwidth}
    \centering$\xymatrix@=1.45em{
    \bullet\ar[d] & \bullet\ar[l]\ar[d] & \bullet\ar[l]\ar[rd] & \bullet\ar[l] \\
    \bullet\ar[ur]\ar[d] &\bullet\ar[l]\ar[r] & \bullet\ar[u]\ar[ld]\ar[rd] & \bullet\ar[l]\ar[u] \\
    \bullet\ar[r] & \bullet\ar[u]\ar[r]\ar[d] & \bullet\ar[u]\ar[d] & \bullet\ar[u]\ar[l]\ar[d] \\
    \bullet\ar[u] & \bullet\ar[l]\ar[r] & \bullet\ar[ul]\ar[ur] & \bullet\ar[l]
    }$
    \caption{A portion of $\mathcal{Q}_\k$}
  \end{subfigure}
  \begin{subfigure}[b]{0.3\textwidth}
    \centering\includegraphics[scale=1.2]{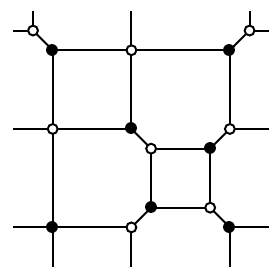}
    \caption{A portion of $G_\mathbf{k}$}
  \end{subfigure}
  \caption{An example of $\mathbf{k}$ and its corresponding $\mathcal{Q}_k$ and $G_\mathbf{k}.$}
  \label{fig:3edges}
\end{figure}

If $\k'$ is obtained from $\k$ by a mutation at $(i,j)$, then we can see from Table \ref{tab:quiver&k} that the face $(i,j)$ in $G_k$ must be a square.
In addition, $G_{\k'}$ can be obtained \cite{Ciucu,Speyer} from $G_\k$ by the following steps. 
\begin{enumerate}
  \item Apply \defemph{urban renewal} at the face $(i,j)$, see Figure \ref{fig:urban}.
  \item \defemph{Collapse} any degree-2 vertices created by the previous step, see Figure \ref{fig:deg2vertex}.
\end{enumerate}

\begin{figure}
  \begin{center}\includegraphics{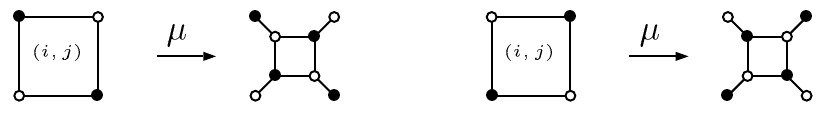}\end{center}
  \caption{The urban renewal at the face $(i,j)$.}
  \label{fig:urban}
\end{figure}

\begin{figure}
  \begin{center}\includegraphics[scale=1.3]{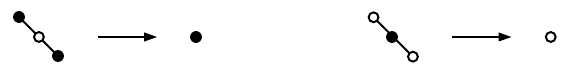}\end{center}
  \caption{A degree-2 vertex and its two adjacent vertices collapse into one vertex.}
  \label{fig:deg2vertex}
\end{figure}

We use the notations $F(G)$, $V(G)$ and $E(G)$ for the set of faces, vertices and edges of a graph $G$, respectively.
We then define two subsets $\mathring{F}=\mathring{F}(p,G_\k)$ and $\partial F = \partial F(p,G_\k)$ of $F(G_\mathbf{k})=\Z^2$ depending on $p$ and $\k$ as follows.
\begin{align}
\begin{aligned}
  \mathring{F}&=\left\{ (i,j)\in \Z^2 ~\big|~ |i-i_0|+|j-j_0|<k_0-\k(i,j) \right\},\\
  \partial F &= \left\{ (i',j')\in\Z^2 \setminus \mathring{F} ~\Big|~ |i'-i|+|j'-j|=1 \text{ for some }(i,j)\in \mathring{F} \right\}.
\end{aligned}\label{eq:opencloseface}
\end{align}
We also assume that $\partial F=\{(i_0,j_0)\}$ when $k_0=\k(i_0,j_0)$.
The set $\mathring{F}$ can be illustrated as the set of points inside (excluding boundary) the shadow projecting from $p$ onto $\k$, while $\partial F$ is the boundary of the projection.
The following example shows elements of $\mathring{F}$ in blue and elements of $\partial F$ in red when $p=(0,0,3)$ and $\k:(i,j)\mapsto |i+j|-1$.
\begin{center}
  \includegraphics[scale=0.65]{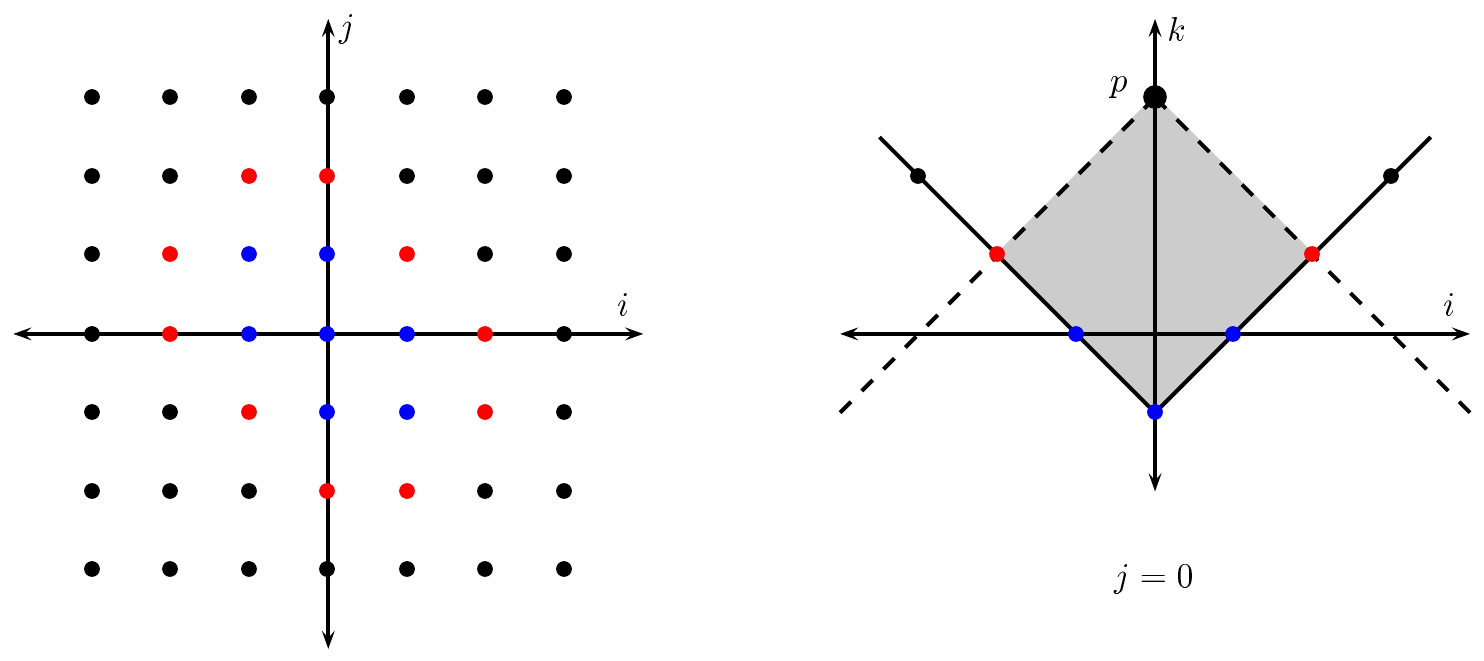}
\end{center}
The picture on the left shows the faces on $(i,j)$-plane, discarding the $k$-direction.
The picture on the right shows the projection in the section $j=0$ of the whole 3-dimensional space.

We will see later from the solution to the T-system (Theorem \ref{thm:main}) that the expression of $T_{i_0,j_0,k_0}$ depends only on $t_{i,j}$'s where $(i,j)\in \mathring{F}\cup\partial F$.
Due to this reason, we will work on a finite subgraph $G_{p,\mathbf{k}}$ of $G_\mathbf{k}$ generated by the faces in $\mathring{F},$ while considering faces in $\partial F$ as ``open faces'' as in the following definition.

\begin{defn}[Graph with open faces {\cite[Section 2.2]{Speyer}}]
  The \defemph{graph with open faces} associated with $p$ and $\k$ is defined to be a pair $\left(G,\partial F(G)\right)$ where $G:=G_{p,\mathbf{k}}$ is a finite subgraph of $G_\mathbf{k}$ generated by the faces in $\mathring{F}$, and $\partial F(G):= \partial F$ is the set of \defemph{open faces}.
\end{defn}

Since we can always determine $\partial F(G)$ from $F(G),$ we will omit $\partial F(G)$ by writing just $G$ instead of $(G,\partial F(G)).$
The faces in $F(G)=\mathring{F}$ are called \defemph{closed faces}, while the faces in $\partial F(G)=\partial F$ are called \defemph{open faces}.

Later in the paper, some other solutions to the T-systems with principal coefficients will look nicer if written in terms of the ``closure" of $G$ instead of $G$.
This will be a graph with no open faces.

\begin{defn}[The closure $\overline{G}$ of $G$]\label{defn:Gclosure}
  For a point $p$ and a surface $\k$, let $\k_p$ be the adjusted stepped surface associated with $\k$ and $p$ defined in \eqref{eq:fun&topsurface} and $G_\infty:=G_{\k_p}$ be the graph associated to $\k_p$.
  We define the closure $\overline{G}$ of $G$ to be the finite subgraph of $G_\infty$ generated by $\mathring{F}\cup\partial F$, and we think of it as a graph with no open face.
\end{defn}

We note that $\k(i,j)=\k_p(i,j)$ for all $(i,j)\in F(G)\cup\partial F(G).$
So the graphs with open faces $G_{p,\k}$ and $G_{p,\k_p}$ are exactly the same except for the shape of the open faces.
Due to the following proposition, we can obtain $\overline{G}$ directly from $G$ by closing all the open faces of $G$ in a certain way.

\begin{prop}\label{prop:modifiedopenface}
  All 16 types of the faces of $\G$ in $F(\overline{G})\setminus F(G) = \partial F$ are shown in Figure \ref{fig_modifiedopenface} where dotted lines indicate edges in $E(\overline{G})\setminus E(G).$
\end{prop}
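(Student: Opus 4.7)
The plan is a local analysis at each open face $(i',j') \in \partial F$, which reduces the classification to four binary parameters and hence to $2^4 = 16$ cases.

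First, I would check that $\k_p = \min(\k,\proj_p)$ is itself a stepped surface. At any $(i,j) \in \Z^2$, both $\k(i,j)$ and $\proj_p(i,j)$ have parity $(i+j+1)\bmod 2$ (since $p \in \Zodd$ and $\k$ is a stepped surface), and each jumps by exactly $\pm 1$ between lattice-adjacent points; hence their minimum has the same parity and, by that parity, cannot stay constant between adjacent points, while it trivially cannot jump by more than $1$. So $\k_p$ is stepped, $G_{\k_p}$ is built from Table \ref{tab:quiver&k} in the usual way, and $\overline{G}$ is the subgraph of $G_{\k_p}$ whose faces are $\mathring{F} \cup \partial F$ by Definition \ref{defn:Gclosure}.

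Second, I would show $\k_p = \k$ on $\partial F$. For $(i',j') \in \partial F$, the inequality $\proj_p(i',j') \leq \k(i',j')$ follows from $(i',j') \notin \mathring{F}$. Conversely, $(i',j')$ is lattice-adjacent to some $(i,j) \in \mathring{F}$, and combining $\proj_p(i,j) > \k(i,j) \geq \k(i',j') - 1$ with $|\proj_p(i,j) - \proj_p(i',j')| = 1$ yields $\proj_p(i',j') \geq \k(i',j') - 1$; since $\proj_p(i',j')$ and $\k(i',j')$ have the same parity, this upgrades to $\proj_p(i',j') \geq \k(i',j')$. Together with Step~1, the four lattice-adjacent neighbors of $(i',j')$ have $\k_p$-heights $\k(i',j') + \epsilon_\ell$ with $\epsilon_\ell \in \{\pm 1\}$, one sign per direction, giving exactly the $2^4 = 16$ local configurations that the proposition catalogues.

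For each sign vector $(\epsilon_1,\epsilon_2,\epsilon_3,\epsilon_4)$, I would assemble the face of $\overline{G}$ at $(i',j')$ by applying Table \ref{tab:quiver&k} to each of the four unit squares sharing $(i',j')$ as a corner, then label every edge of the resulting picture as solid or dotted according to whether it already lies in $G$ --- equivalently, whether its unit square sits entirely inside $\mathring{F}\cup\partial F$ and the edge borders some face in $\mathring{F}$. Edges failing either condition (because some corner of the incident unit square falls outside $\mathring{F}\cup\partial F$, or because the edge borders only open faces) are the dotted ones. The main obstacle is purely the bookkeeping of these 16 routine cases; the outcome is the catalogue drawn in Figure \ref{fig_modifiedopenface}.
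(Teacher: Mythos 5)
Your proposal is correct and follows essentially the same route as the paper's (very terse) proof: classify each open face by the heights of its four lattice-adjacent neighbours and read off the local picture from Table \ref{tab:quiver&k}; you additionally supply the verifications the paper leaves implicit, namely that $\k_p$ is a stepped surface and that $\k=\k_p=\proj_p$ on $\partial F$, which is a worthwhile strengthening. The only point your bookkeeping must confirm along the way is that the portion of the face at $(i',j')$ contributed by each unit square does not depend on the height of the diagonally opposite corner (which Table \ref{tab:quiver&k} a priori requires), so that the four signs $\epsilon_\ell$ really do yield exactly $16$ types.
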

\begin{proof}
  At each open face of $G$, we consider the height of its neighboring faces.
  The shape of the faces are obtained from Table \ref{tab:quiver&k}.
  The proposition then easily follows.
\end{proof}

\begin{ex}\label{ex:gbar}
  Let $\k(i,j)=|i+j|-1$ and $p=(0,0,3)$.
  Then the infinite graphs $G_\mathbf{k}$, $G_\infty=G_{\k_p}$ and the finite graphs $G=G_{p,\k}$, $\overline{G}$ are shown in Figure \ref{fig_Gbar}.
\end{ex}

\begin{figure}
  \begin{center}\includegraphics[scale=0.8]{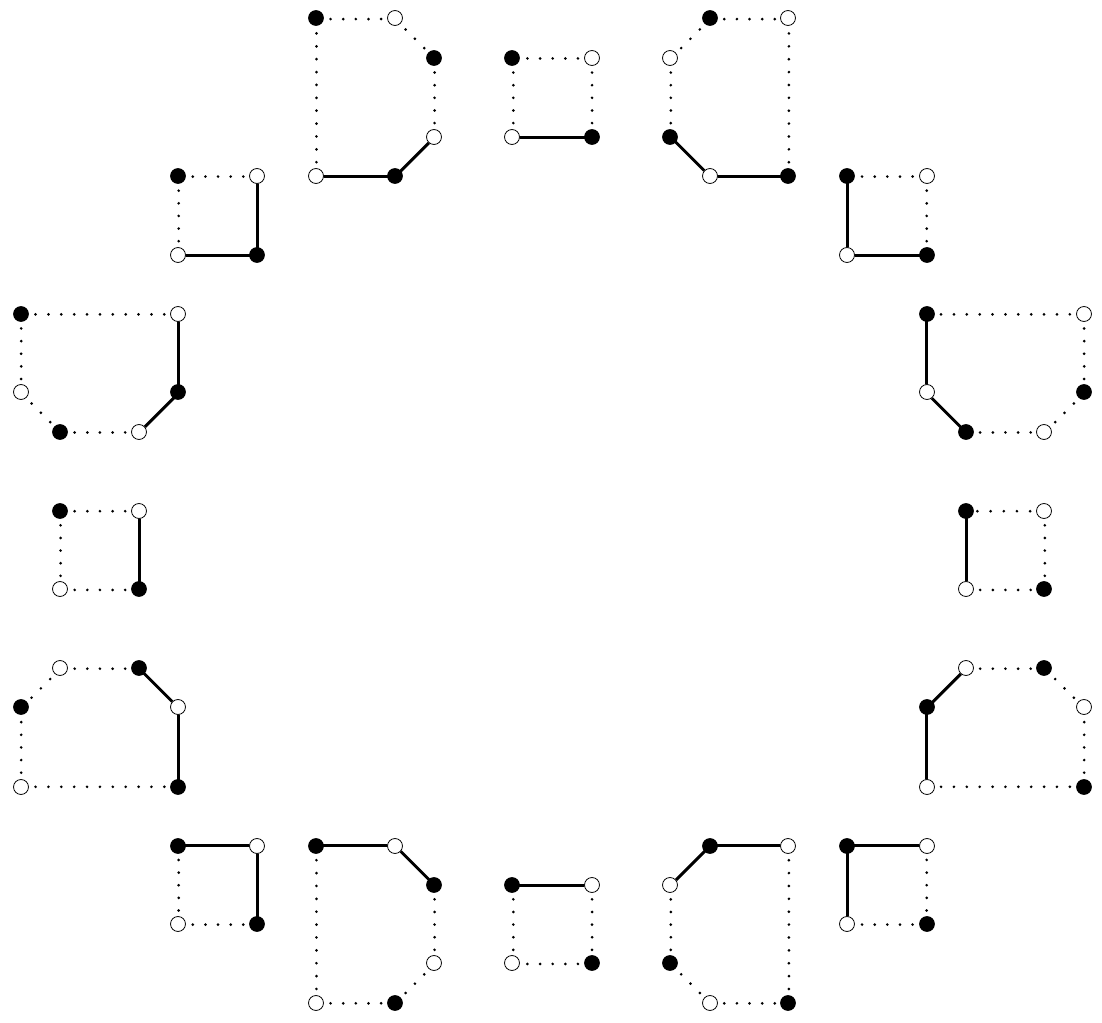}\end{center}
  \caption{Faces in $F(\overline{G})\setminus F(G) = \partial F$ of $\overline{G}.$}
  \label{fig_modifiedopenface}
\end{figure}

\begin{figure}
  \begin{center}\includegraphics[scale=0.5]{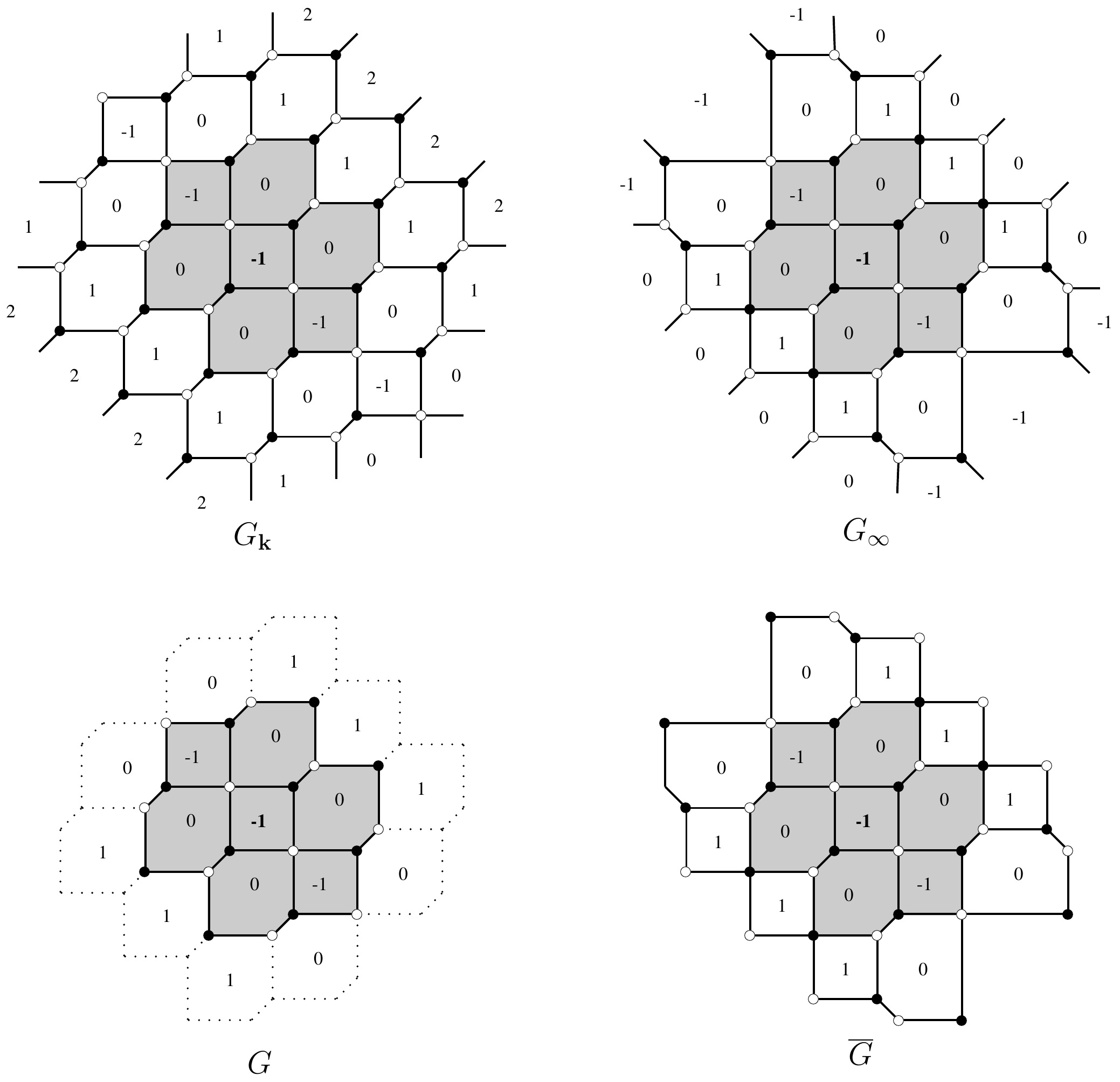}\end{center}
  \caption{
    $G_\mathbf{k}$, $G_\infty$, $G$ and $\overline{G}$ when $\k(i,j)=|i+j|-1$ and $p=(0,0,3)$.
    The shaded faces are the faces in $\mathring{F}$.
  }
  \label{fig_Gbar}
\end{figure}


\subsection{Face-weight and pairing-weight}\label{subsec:weight}

From this point onward, we let $G:=G_{p,\k}$ as a graph with open faces.
Let $\mathcal{M}$ be a set of all perfect matchings, a.k.a. dimer configurations, of $G.$
We recall that a perfect matching of $G$ is a subset $M\subseteq E(G)$ such that each $v\in V(G)$ is incident to exactly one edge in $M.$

We define the face-weight $w_f$ and the pairing-weight $w_p$ on $G$, which contribute cluster variables/initial data $t_{i,j}$'s and coefficients $c_{i,j}$'s, respectively, to the expression of $T_{i_0,j_0,k_0}$.

\begin{defn}\label{defn:faceweight}
  For a face $(i,j)\in \mathring{F}\cup \partial F,$ we define the \defemph{face-weight} depending on a perfect matching $M$ of $G$ as:
  \[
    w_f(M) := \prod_{x\in \mathring{F}\cup\partial F}w_f(x),
  \]
  where a contribution of a face to the product is defined as:
  \[
    w_f(i,j) := 
    \begin{cases}
      t_{i,j}^{\left\lceil {\frac{b-a}2}\right\rceil-1}, 	& (i,j)\in \mathring{F},\\
      t_{i,j}^{\left\lceil {\frac{b-a}2}\right\rceil}, 		& (i,j)\in \partial F,
    \end{cases}
  \]
  where $a$ is the number of sides of $(i,j)$ in the matching $M$ and $b$ the number of sides in $E(G)\setminus M.$
\end{defn}

The pairing-weight will be defined on pairs of horizontal edges in $M.$
We first note that there are exactly two types of horizontal edges in $G$ as follows.
\begin{center}
  \includegraphics{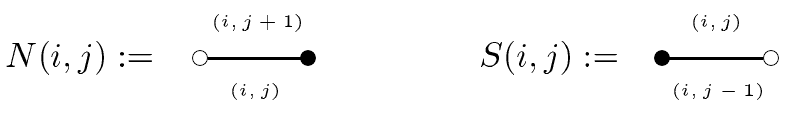}
\end{center}
\begin{itemize}
  \item a \defemph{white-black} horizontal edge, an edge joining a white vertex on the left and a black vertex on the right. We will call it $N(i,j)$, indexing by the face $(i,j)$ below it (the north side of the face $(i,j)$).
  \item a \defemph{black-white} horizontal edge, an edge joining a black vertex on the left and a white vertex on the right. We will call it $S(i,j)$, indexing by the face $(i,j)$ above it (the south side of the face $(i,j)$).
\end{itemize}

Let an \defemph{allowed pair} be a pair of $S(i,j_1)$ and $N(i,j_2)$ when $j_1\leq j_2$ in the same column of the graph.
In the other words, an allowed pair is a pair of a white-black horizontal edge above a black-white horizontal edge in the same column.
We denote $\binom{N(i,j_2)}{S(i,j_1)}$ for an allowed pair. Since $F(G)\subset\Z^2$, for each $i$ we can consider a subgraph of $G$ generated by the faces in $F(G)\cap (\{i\}\times\Z).$
In this column subgraph, we read from the bottom to the top and get a sequence of horizontal edges in the matching $M$.
We then pair these edges into allowed pairs by the following steps.
\begin{enumerate}
  \item If $S(i,j_1)$ and $N(i,j_2)$ where $j_1\leq j_2$ are consecutive in the sequence, we pair the two.
  \item Remove both $S(i,j_1)$ and $N(i,j_2)$ from the sequence, and repeat the first step until the sequence is empty.
\end{enumerate}
We do this to all of the columns of $G$.
The set $P$ of all allowed pairs obtained by this process is called the \defemph{perfect pairing} of $M$.
Proposition \ref{prop:main} will guarantee that the process works and the perfect pairing always exists.
Now the pairing-weight is defined in the following definition.

\begin{defn}\label{defn:pairweight}
  Let $P$ be the perfect pairing of a perfect matching $M$ of $G$.
  The \defemph{pairing-weight} on $M$ is defined to be:
  \[
    w_p(M):=\prod_{x\in P}w_p(x),
  \]
  where a contribution of an allowed pair in the product is defined as:
  \begin{align*}
    w_p\binom{ N(i,j_2)}{S(i,j_1)} := J_{i,j',k'}= \prod_{a=j_1-\k(i,j_1)-1}^{j_2+\k(i,j_2)+1}c_{i,a},
  \end{align*}
  and
  \begin{align*}
    j' &= j_1-\k(i,j_1)+k'-1 = j_2+\k(i,j_2)-k'+1,\\
    k' &= \dfrac{\k(i,j_1)+\k(i,j_2)-j_1+j_2}{2}+1.
  \end{align*}
  A contribution of an allowed pair in the perfect pairing to the pairing-weight can be illustrated by Figure \ref{fig_pairing}.
\end{defn}

\begin{figure}
  \begin{center}\includegraphics{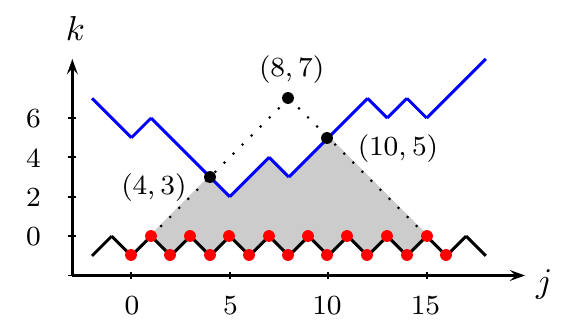}\end{center}
  \caption{
    If $\k$ is the surface depicted in blue, then $w_p\binom{ N(0,10)}{S(0,4)}= c_{0,0}c_{0,1}\dots c_{0,15} = J_{0,8,8}$ is shown in red.
    The picture is drawn in the section $i=0$ of the 3-dimensional lattice.
  }
  \label{fig_pairing}
\end{figure}

\begin{ex}
  Consider the following perfect matching $M$ of the graph $G$ from Example \ref{ex:gbar}.
  \begin{center}
    \includegraphics[scale=0.9]{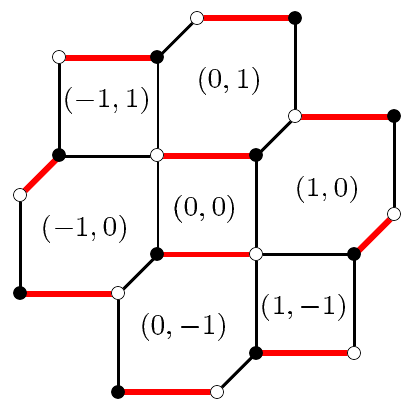}
  \end{center}
  The perfect pairing is
  \[
    P=\left\{ \binom{N(-1,1)}{S(-1,0)}, \binom{N(0,0)}{S(0,0)}, \binom{N(0,1)}{S(0,-1)}, \binom{N(1,0)}{S(1,-1)} \right\}.
  \]
  We then have
  \[
    w_p(M) = (c_{-1,-1}c_{-1,0}c_{-1,1})(c_{0,0})(c_{0,-2}c_{0,-1}\dots c_{0,2})(c_{1,-1}c_{1,0}c_{1,1}).
  \]
  Also, the face-weight of $M$ is $w_f(M) = t_{-2,0}t_{0,0}^{-1}t_{2,0}.$
\end{ex}

\begin{prop}\label{prop:main}
  Let $M$ be a perfect matching of $G$.
  Then the following holds.
  \begin{enumerate}
    \item
      If all four adjacent faces of a face $(i,j)\in\mathring{F}$ have the same height, then the face $(i,j)$ is a square.
      Also, the coloring around the face depends on the height of $(i,j)$ and its neighbors as shown in Figure \ref{fig:mutatable}.
    \item
      For each $i\in\Z,$ we get $|\{ S(i,j)\in M \mid j\in\Z\}| = |\{ N(i,j)\in M \mid j\in\Z\}|.$
      That means the number of black-white horizontal edges in $M$ and the number of white-black horizontal edges in $M$ in the same column are equal.
    \item
      For each $i,j\in\Z,$ $|\{ S(i,b)\in M \mid b\leq j\}| \geq |\{ N(i,b)\in M \mid b\leq j\}|.$
      That means in any column of $G$ the number of black-white horizontal edges in $M$ dominates the number of the white-black edges in $M$ when counting from bottom to top.
  \end{enumerate}
\end{prop}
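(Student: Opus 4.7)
The plan is to treat the three parts in order, using the explicit local structure of $G$ recorded in Table \ref{tab:quiver&k}.

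For Part (1), the argument is a direct enumeration. If all four neighbors of $(i,j)$ have the same height $k'$, then since the stepped surface changes by exactly one between lattice-adjacent points, $k' = \k(i,j) \pm 1$. The face $(i,j)$ appears in four $2\times 2$ blocks of $F(G_\k)$, one for each diagonal neighbor, and in each such block the three non-$(i,j)$ heights all equal $k'$. Comparing these height patterns against the six rows of Table \ref{tab:quiver&k}, only the first two rows are compatible with every block, and both produce a square face for $(i,j)$. Reading off the induced bipartite coloring from the table then yields the two configurations drawn in Figure \ref{fig:mutatable}, one for each sign of $k - k'$.

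For Parts (2) and (3), the plan is to restrict $M$ to a single column $i$ and run a bipartite parity argument on the horizontal edges. Order the horizontal edges of $M$ lying in column $i$ from bottom to top, say $e_1, e_2, \ldots, e_r$, each of which is either $S$-type or $N$-type. Between any two consecutive edges $e_a, e_{a+1}$, the remaining vertices of the column strip must be matched by $M$ using only non-horizontal (vertical or diagonal) edges, and an analogous constraint applies to the region above $e_r$ and below $e_1$. The crucial claim is that such an intermediate region admits a perfect matching by non-horizontal edges only when $e_a$ is $S$-type and $e_{a+1}$ is $N$-type, while the region below $e_1$ forces $e_1$ to be $S$-type and the region above $e_r$ forces $e_r$ to be $N$-type. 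Granting this, Part (3) is immediate because the running tally $\#\{S\text{-edges up to height }j\} - \#\{N\text{-edges up to height }j\}$ is then nonnegative at every $j$, and Part (2) follows since the total over the whole column must balance to zero.

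The main obstacle is proving the intermediate-region claim. The boundary of the region between $e_a$ and $e_{a+1}$ consists of the two horizontal edges together with zigzag left and right boundaries built from vertical and diagonal edges whose shapes depend on the local heights of $\k$ (classified again by Table \ref{tab:quiver&k}). I would proceed by induction on the number of faces in the intermediate region, tracking how the bipartite color balance on the two sides of the strip evolves face by face, and case-splitting on the four local height transitions that can occur along the column. This local-to-global bookkeeping should deliver the required parity constraint; some extra care will be needed at the top and bottom of the column where the open faces of $G$ meet the region.
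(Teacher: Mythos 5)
Your part (1) matches the paper: both read the answer off Table \ref{tab:quiver&k}.

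For parts (2) and (3) you take a genuinely different route from the paper --- the paper inducts on the number of downward mutations from $\proj_p$ to $\k$ and checks that properties (2) and (3) are preserved under the local matching correspondences induced by urban renewal (Figures \ref{fig:matching-mutation}, \ref{fig_urbanopen}, \ref{fig_urbaneast}) --- but your ``crucial claim'' is false, so the argument does not go through. You assert that between consecutive horizontal edges $e_a, e_{a+1}$ of $M$ in column $i$ the local structure forces $e_a$ to be $S$-type and $e_{a+1}$ to be $N$-type, which would make the bottom-to-top sequence strictly alternate $S,N,S,N,\dots$. That is strictly stronger than what the proposition asserts (the ballot condition: partial $S$-counts dominate partial $N$-counts, with equal totals), and it is contradicted by the paper's own example following Definition \ref{defn:pairweight}: there, column $i=0$ contains the horizontal edges $S(0,-1)$, $S(0,0)$, $N(0,0)$, $N(0,1)$, which read bottom to top as $S,S,N,N$. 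Two consecutive $S$-edges occur, so no local analysis of the region between consecutive horizontal edges can force alternation, and your derivation of (3) (and hence (2)) collapses.

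There is also a secondary problem with the setup: you claim the vertices of the column strip between $e_a$ and $e_{a+1}$ must be matched by $M$ using only vertical or diagonal edges. But the vertices on the lateral boundary of the strip belong to faces of the adjacent columns as well, and $M$ may match them by edges that leave the strip entirely (including horizontal edges of columns $i\pm 1$). Any parity bookkeeping confined to the strip must therefore control how many boundary vertices of each colour are matched externally, which is essentially the whole difficulty and is not addressed by your sketch. If you want a direct (non-inductive) proof you would need a global invariant --- e.g.\ a height-function or flow argument comparing $M$ to the reference configuration $\overline{M}_0$ --- rather than a strictly local alternation claim.
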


\begin{proof}
  (1) follows directly from Table \ref{tab:quiver&k}.
  For (2) and (3), we first notice that if the point $p=(i_0,j_0,k_0)$ lies on the stepped surface $\k$ then the graph with open face associated to $p$ and $\k$ is $(G,\partial F)$ where $G$ is empty and $\partial F = \{ (i_0,j_0) \}.$
  So (2) and (3) automatically hold.
  
  If $k_0 > \k(i_0,j_0)$, then $G:=G_{p,\k} = G_{p,\k_p}$, see \eqref{eq:fun&topsurface}.
  Without loss of generality, we can then assume that $\k = \k_p$.
  Then $\k$ is obtainable from the stepped surface $\proj_p$  by a finite number of downward mutations.
  We will show (2) and (3) using induction on the number of downward mutations from $\proj_p.$
  
  When $\k$ is away from $\proj_p$ by only one downward mutation, $G$ is a square of type (S1) in Figure \ref{fig:mutatable}.
  There are only two perfect matchings of the graph, which both satisfy (2) and (3).
  
  Next, we assume that the claims hold for any surfaces which are away from $\proj_p$ by less than $n$ mutations.
  Let $\k$ be a surface away from $\proj_p$ by $n$ downward mutations.
  There must be an intermediate surface $\k'$ such that $\k'$ is obtained from $\proj_p$ by $n-1$ downward mutations and $\k$ is obtained from $\k'$ by one downward mutation, says at $(i,j)$.
  We have two cases:
  \begin{center}
    \includegraphics{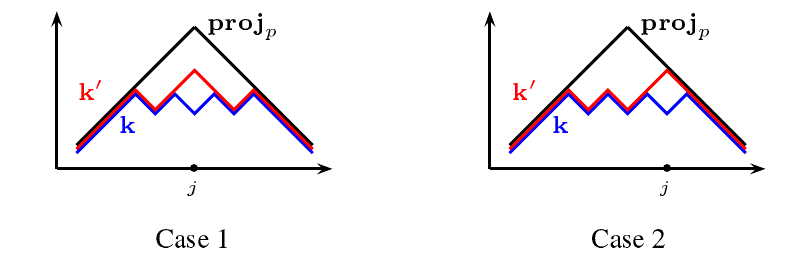}
  \end{center}
  
  \textbf{Case 1.}
  If $(i,j)$ is a closed face of $G_{p,\k'}$, then $(i,j)$ is also a closed face of $G_{p,\k}$.
  $G_{p,\k}$ is obtained from $G_{p,\k'}$ by applying the urban renewal action at the face $(i,j)$ then collapsing all degree-2 vertices created by the urban renewal.
  For any perfect matching $M$ of $G_{p,\k}$, there exists a perfect matching $M'$ of $G_{p,\k'}$ differing from $M$ only at the face $(i,j)$, see Figure \ref{fig:matching-mutation}.
  Since $M'$ satisfies (2) and (3) by the induction hypothesis, we see from Figure \ref{fig:matching-mutation} that $M$ also satisfy (2) and (3).
  So they hold for any matchings of $G_{p,\k}$.

  \textbf{Case 2.}
  If $(i,j)$ is an open face of $G_{p,\k'}$, then $(i,j)$ becomes a closed face of $G_{p,\k}$.
  We first consider the case when $i>i_0$ and $j>j_0$.
  $G_{p,\k}$ is obtained from $G_{p,\k'}$ by applying the urban renewal action at the face $(i,j)$ and collapsing all degree-2 vertices created by the urban renewal.
  This yields the correspondence of the matchings of $G_{p,\k'}$ and $G_{p,\k}$ via Figure \ref{fig_urbanopen}.
  With the same argument as for a closed face, (2) and (3) hold for any perfect matchings of $G_{p,\k}$.
  Similarly, if $i>i_0$ and $j=j_0$, the correspondence is shown in Figure \ref{fig_urbaneast}, which implies (2) and (3) for any perfect matchings of $G_{p,\k}$.
  The other cases can be treated similarly.
  
  From both cases, the statements (2) and (3) hold for every perfect matching of $G_{p,\k}.$
  By induction, we proved (2) and (3).
\end{proof}

\begin{figure}
  \begin{center}\includegraphics{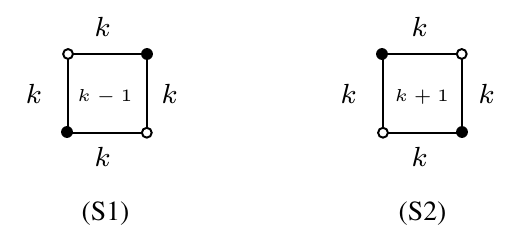}\end{center}
  \caption{
    The only two possibilities of square faces.
    The values on the faces indicate their height.
  }
  \label{fig:mutatable}
\end{figure}

\begin{table}
  \begin{center}\includegraphics[scale=0.9]{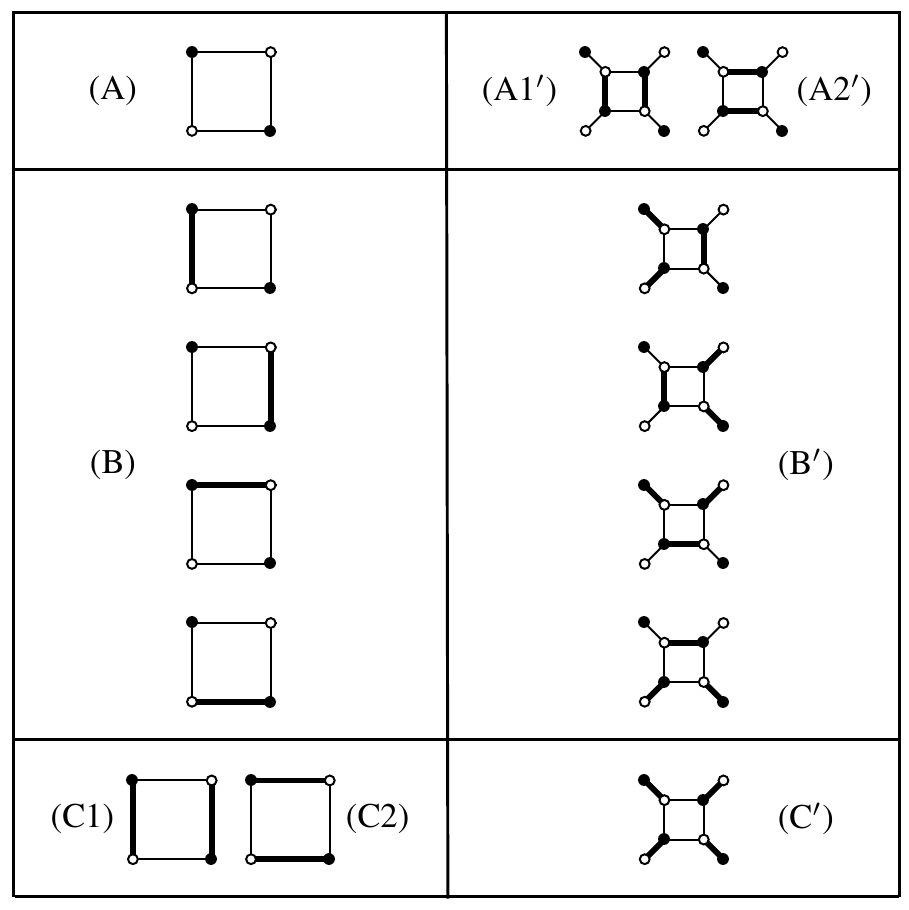}\end{center}
  \caption{The list of all correspondences between matchings before and after a single downward mutation at a closed face.}
  \label{fig:matching-mutation}
\end{table}

\begin{table}%
  \begin{center}\includegraphics[scale=0.9]{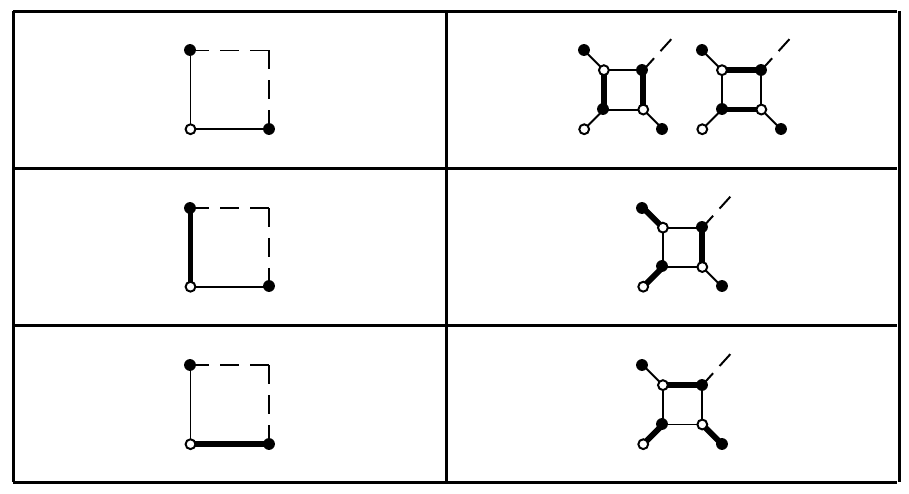}\end{center}
  \caption{The list of all correspondences between matchings before and after a single downward mutation at an open face $(i,j)$ where $i>i_0$ and $j>j_0$.}%
  \label{fig_urbanopen}%
\end{table}

\begin{table}%
  \begin{center}\includegraphics[scale=0.9]{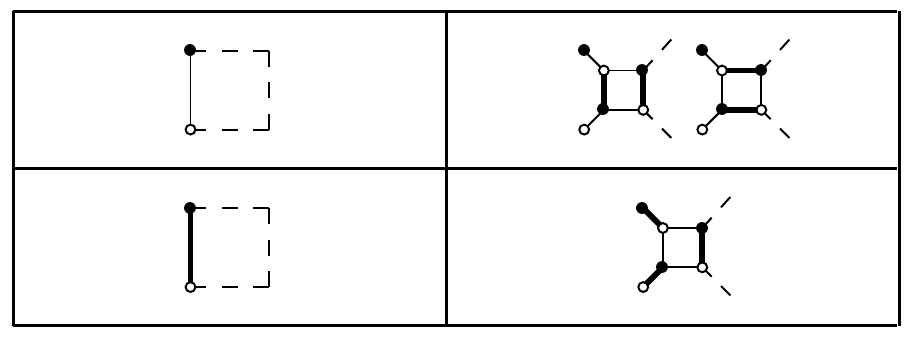}\end{center}
  \caption{The list of all correspondences between matchings before and after a single downward mutation at an open face $(i,j)$ where $i>i_0$ and $j=j_0$.}%
  \label{fig_urbaneast}%
\end{table}

We have defined face-weight and pairing-weight for perfect matchings of $G$.
The previous proposition ensures that the pairing-weight is well-defined.
We are now ready to state the main theorem.


\subsection{Perfect-matching solution}

\begin{thm}[Perfect-matching solution]\label{thm:main}
    Let $p=(i_0,j_0,k_0)$ and $\mathbf{k}$ be an admissible initial data stepped surface with respect to $p$ where $k_0\geq \k(i_0,j_0)$ and $\k\geq \fund$.
    Then
    \begin{align}
      T_{i_0,j_0,k_0} = \sum_{M\in\mathcal{M}} w_p(M)w_f(M)
    \label{eq:dimersol}
    \end{align}
    where $\mathcal{M}$ is the set of all the perfect matchings of $G=G_{p,\k}.$
  \end{thm}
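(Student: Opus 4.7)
The plan is to prove Theorem \ref{thm:main} by induction on the number of downward mutations from $\proj_p$ to $\k$, mirroring the inductive structure used in the proof of Proposition \ref{prop:main}. Since $G_{p,\k}=G_{p,\k_p}$, one may first replace $\k$ by the adjusted stepped surface $\k_p$ defined in \eqref{eq:fun&topsurface} and assume $\k\leq\proj_p$; any such admissible $\k\geq\fund$ is then reached from $\proj_p$ by finitely many downward mutations.

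For the base case $\k=\proj_p$, the point $p$ lies on $\k$ so $T_{i_0,j_0,k_0}=t_{i_0,j_0}$ by admissibility. Combinatorially, $\mathring{F}=\emptyset$ and $\partial F=\{(i_0,j_0)\}$ by convention, there is a unique empty perfect matching, and one verifies directly from Definitions \ref{defn:faceweight} and \ref{defn:pairweight} that its total weight collapses to $t_{i_0,j_0}$.

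For the inductive step, suppose the formula holds for $\k'$ and that $\k$ is obtained from $\k'$ by a single downward mutation at a face $(i,j)$ taking $\k'(i,j)=k$ to $\k(i,j)=k-2$. The T-system with principal coefficients (Theorem \ref{thm:Tsyscoef}) gives
\[
T_{i,j,k-2}\,T_{i,j,k}=J_{i,j,k-1}\,T_{i-1,j,k-1}\,T_{i+1,j,k-1}+I_{i,j,k-1}\,T_{i,j-1,k-1}\,T_{i,j+1,k-1},
\]
which expresses the old datum $t_{i,j}$ (on $\k'$) as a rational function of the new datum $t'_{i,j}$ (on $\k$) and the unchanged neighboring data. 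I would substitute this relation into the perfect-matching sum for $G_{p,\k'}$ given by the induction hypothesis and show that the result is exactly the perfect-matching sum for $G_{p,\k}$. The correspondence between matchings before and after a downward mutation that was built in the proof of Proposition \ref{prop:main} (Figures \ref{fig:matching-mutation}, \ref{fig_urbanopen}, \ref{fig_urbaneast}) supplies the required bijection: each matching of $G_{p,\k'}$ is paired either with a single matching or with a pair of matchings of $G_{p,\k}$ whose weighted sum reproduces the substituted expression. Because urban renewal is designed to implement the cluster-algebra exchange rule, the face-weight $w_f$ transforms correctly by a purely local computation around the mutated face $(i,j)$, reproducing the numerator/denominator structure of the exchange relation up to the coefficient factors.

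The hard part will be tracking the pairing-weight $w_p$ through this bijection. Unlike $w_f$, $w_p$ is a nonlocal quantity: horizontal edges in each column are paired bottom-to-top across the whole column, and a single downward mutation at $(i,j)$ can change which horizontal edges lie in the matching in column $i$. I need to show that the two local choices of matching at the renewed square face contribute precisely the monomials $J_{i,j,k-1}$ and $I_{i,j,k-1}$ demanded by the exchange relation, and that all other column pairings are either preserved or shifted consistently so that their weights are unaffected. This requires a careful case analysis based on the explicit formulas for $I$ and $J$ in \eqref{eq:IJ}, handling separately mutations at interior closed faces and at open boundary faces, the latter needing the extra bookkeeping already visible in Figures \ref{fig_urbanopen} and \ref{fig_urbaneast}. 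Once this verification is in place, combining it with the local face-weight transformation closes the induction and establishes the formula for arbitrary admissible $\k$.
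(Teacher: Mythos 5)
Your plan is, in outline, the same as the paper's: induction on the number of downward mutations from $\proj_p$ to $\k$ (after replacing $\k$ by $\k_p$), a base case consisting of the unique matching of $G_{\proj_p}$ with weight $t_{i_0,j_0}$, and an inductive step driven by the local matching correspondences of Figure \ref{fig:matching-mutation} together with the exchange relation \eqref{eq:tsyscoef}. One structural difference: the paper does not run the induction on $G$ itself but first passes to the infinite completion $G_\infty=G_{\k_p}$ and its acceptable perfect matchings (Proposition \ref{prop:infcomp}, Theorem \ref{thm:maininf}), precisely so that every mutated face in the induction is a closed square face; the open-face bookkeeping of Figures \ref{fig_urbanopen} and \ref{fig_urbaneast} is thereby confined to the well-definedness argument for the pairing (Proposition \ref{prop:main}) and does not reappear in the main induction. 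Your route of handling open faces inside the induction should still work, but it duplicates that case analysis for no gain.

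The substantive issue is that the step you defer as ``the hard part'' --- tracking the nonlocal pairing-weight through the bijection --- is the actual content of the proof, and your proposal only states what must be shown. The paper's verification is concrete: in the $(A)$-type configuration the matching $M_{A2'}$ acquires exactly one new allowed pair $\binom{N(i,j)}{S(i,j)}$, contributing the factor $J_{i,j,k}$; in the $(C)$-type configuration the two pairs $\binom{N(i,j-1)}{S(i,j_1)}$ and $\binom{N(i,j_2)}{S(i,j+1)}$ of $M'$ merge into the single pair $\binom{N(i,j_2)}{S(i,j_1)}$ of $M_{C'}$, and the telescoping of the products of $c_{i,a}$ yields the same ratio $J_{i,j,k}$, with all other column pairings untouched. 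Moreover, your expectation that the two local choices at the renewed square contribute ``precisely the monomials $J_{i,j,k-1}$ and $I_{i,j,k-1}$'' is only half right: under the hypothesis $\k\geq\fund$ every mutated height satisfies $k\geq 0$, so $I_{i,j,k}=1$ by \eqref{eq:IJ}, and the vertical-edge local configuration contributes no coefficient monomial at all. If you set out to extract a nontrivial $I$-factor from the pairing-weight there, you would find none, and the identity closes only because $I$ is trivial in this regime. Spelling out these two computations is what turns your outline into a proof.
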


This solution specializes to the solution in \cite{Speyer} for the coefficient-free T-system \cite[The Aztec Diamonds theorem]{Speyer} when $c_{i,j}=1$ for all $(i,j)\in\Z^2$.

The proof of the theorem follows from the proof in \cite{Speyer} using the ``infinite completion" of $G=G_{p,\k}$, which is the same thing as $G_\infty = G_{\k_p}$ in our setup.
To do so, we need to make sense of perfect matchings of $G_\infty$ and weight on them.

\begin{defn}[Acceptable perfect matching of $G_\infty$]
  We call a perfect matching $M_\infty$ of $G_\infty$ \defemph{acceptable} if $M_\infty \setminus E(G)$ is exactly the set of all the diagonal edges in $E(G_\infty)\setminus E(G)$.
\end{defn}

We then extend the definition of the face-weight and the pairing-weight to acceptable perfect matchings of $G_\infty$.
Notice that $G_\infty$ has no open faces.
Also the weight of $M$ and $M_\infty$ are equal, i.e.
\begin{align}
  w_p(M_\infty)w_f(M_\infty) = w_p(M)w_f(M).
  \label{eq:weightpreserve}
\end{align}
The following proposition gives a bijection between the perfect matchings of $G$ and the acceptable perfect matching of $G_\infty$.	

\begin{prop}[{{\cite[Proposition 6]{Speyer}}}]\label{prop:infcomp}
  
  There exists a bijection between the set of all perfect matchings of $G$ and the set of all acceptable perfect matchings of $G_\infty$, which maps a perfect matching $M$ of $G$ to an acceptable perfect matching $M_\infty$ of $G_\infty$ where
  \[
    M_\infty = M \cup \{ \text{diagonal edges in }E(G_\infty)\setminus E(G) \}.
  \]
\end{prop}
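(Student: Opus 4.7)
The plan is to establish that $\Phi:M\mapsto M_\infty:=M\cup D$, where $D:=\{\text{diagonal edges in }E(G_\infty)\setminus E(G)\}$, is a bijection from perfect matchings of $G$ to acceptable perfect matchings of $G_\infty$. Acceptability of $\Phi(M)$ holds tautologically since $\Phi(M)\setminus E(G)=D$, and $M$ can be recovered as $M_\infty\cap E(G)$, which gives injectivity for free. The nontrivial content is therefore (a) that $\Phi(M)$ is in fact a perfect matching of $G_\infty$, and (b) that every acceptable perfect matching of $G_\infty$ arises this way.

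Both (a) and (b) reduce to two structural claims about $G_\infty$: first, that $D$ restricted to $V(G_\infty)\setminus V(G)$ is a perfect matching of that vertex set; and second, that no edge of $D$ has an endpoint in $V(G)$. Granted these, the set $M\cup D$ covers each vertex of $V(G)$ exactly once via $M$ and each vertex of $V(G_\infty)\setminus V(G)$ exactly once via $D$, proving (a). For (b), acceptability forces $M_\infty\setminus E(G)=D$, so $M:=M_\infty\setminus D\subseteq E(G)$; the two structural claims then force $M$ to cover $V(G)$ perfectly, making $M$ a perfect matching of $G$ with $\Phi(M)=M_\infty$.

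To prove the two structural claims, I analyze $G_\infty$ locally outside the shadow $\mathring F$. Since $\k_p$ coincides with $\proj_p$ on $\Z^2\setminus\mathring F$ and $\proj_p$ strictly descends by one unit per lattice step away from $(i_0,j_0)$, every $2\times 2$ lattice block lying outside $\mathring F$ matches one of the descending height patterns in rows 3--6 of Table \ref{tab:quiver&k}: the upper-right quadrant yields the row-6 pattern, the upper-left the row-3 pattern, and so on. Each such block contributes a pair of triangular faces of $G_\infty$ joined by a unique diagonal edge, and these are the only diagonals of $G_\infty$ lying outside $E(G)$ in the quadrant interiors. The pairing they define is tautologically a perfect matching of the outer vertices, which establishes the first structural claim. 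The second claim then reduces to inspecting the sixteen boundary configurations of Figure \ref{fig_modifiedopenface} (Proposition \ref{prop:modifiedopenface}) and verifying that every edge of $E(G_\infty)\setminus E(G)$ meeting a vertex of $V(G)$ is horizontal or vertical, never diagonal.

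The main obstacle is this last boundary verification: for each of the sixteen open-face shapes near $\partial F$ one must compare the local picture of $\mathcal{Q}_{\k_p}$ to Table \ref{tab:quiver&k} and confirm that no diagonal edge of $E(G_\infty)\setminus E(G)$ terminates at a $V(G)$-vertex. This is a finite case analysis, precisely the combinatorial verification carried out by Speyer in \cite[Proposition 6]{Speyer}, on which the present bijection is modeled; the coefficient data $\{c_{i,j}\}$ play no role here, since the bijection depends only on the underlying graph structure.
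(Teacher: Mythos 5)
The paper offers no proof of Proposition \ref{prop:infcomp}: it is imported verbatim from \cite[Proposition 6]{Speyer} and only illustrated by an example, so there is no internal argument to measure your proposal against. On its own terms your reduction is sound. Once one knows (i) that the diagonal edges $D\subseteq E(G_\infty)\setminus E(G)$ cover every vertex of $V(G_\infty)\setminus V(G)$ exactly once and (ii) that no edge of $D$ meets a vertex of $V(G)$, both directions of the bijection follow by exactly the bookkeeping you describe, and your observation that $\k_p$ agrees with $\proj_p$ outside $\mathring{F}$, so that every unit block lying wholly outside the shadow is monotone and falls into rows 3--6 of Table \ref{tab:quiver&k}, correctly accounts for the diagonals deep in the four quadrants. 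Note also that claim (ii) is forced by the consistency of the paper's later constructions (e.g.\ the assertion in Definition \ref{defn:Mbar0} that $\overline{M}$ is a matching), so you are proving the right facts.

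Two caveats. First, you establish claim (i) only for blocks lying entirely outside $\mathring{F}$, but $V(G_\infty)\setminus V(G)$ also contains vertices sitting in blocks that straddle $\partial F$; those must likewise be matched exactly once by $D$, and that verification belongs to the same sixteen-configuration boundary check (Proposition \ref{prop:modifiedopenface}, Figure \ref{fig_modifiedopenface}) that you assign only to claim (ii). As written, claim (i) is invoked before it is fully established. Second, that boundary check is precisely the step you ultimately delegate to Speyer, so your proposal amounts to a correct logical skeleton plus a citation for the combinatorial core --- which, in substance, is where the paper already stands. To make the proof self-contained you would need to actually run the finite case analysis rather than point at it.
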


\begin{ex}
  Figure \ref{fig_exinfcomp} shows an example of a perfect matching $M$ of $G$ and its corresponding acceptable perfect matching $M_\infty$ of $G_\infty$ from the bijection in Proposition \ref{prop:infcomp}.
  An edge in $M_\infty$ is either an edge in $M$ (described in red) or a diagonal edge in $E(G_\infty)\setminus E(G)$ (described in blue).
 \end{ex}

\begin{figure}
  \begin{center}\includegraphics[scale=0.6]{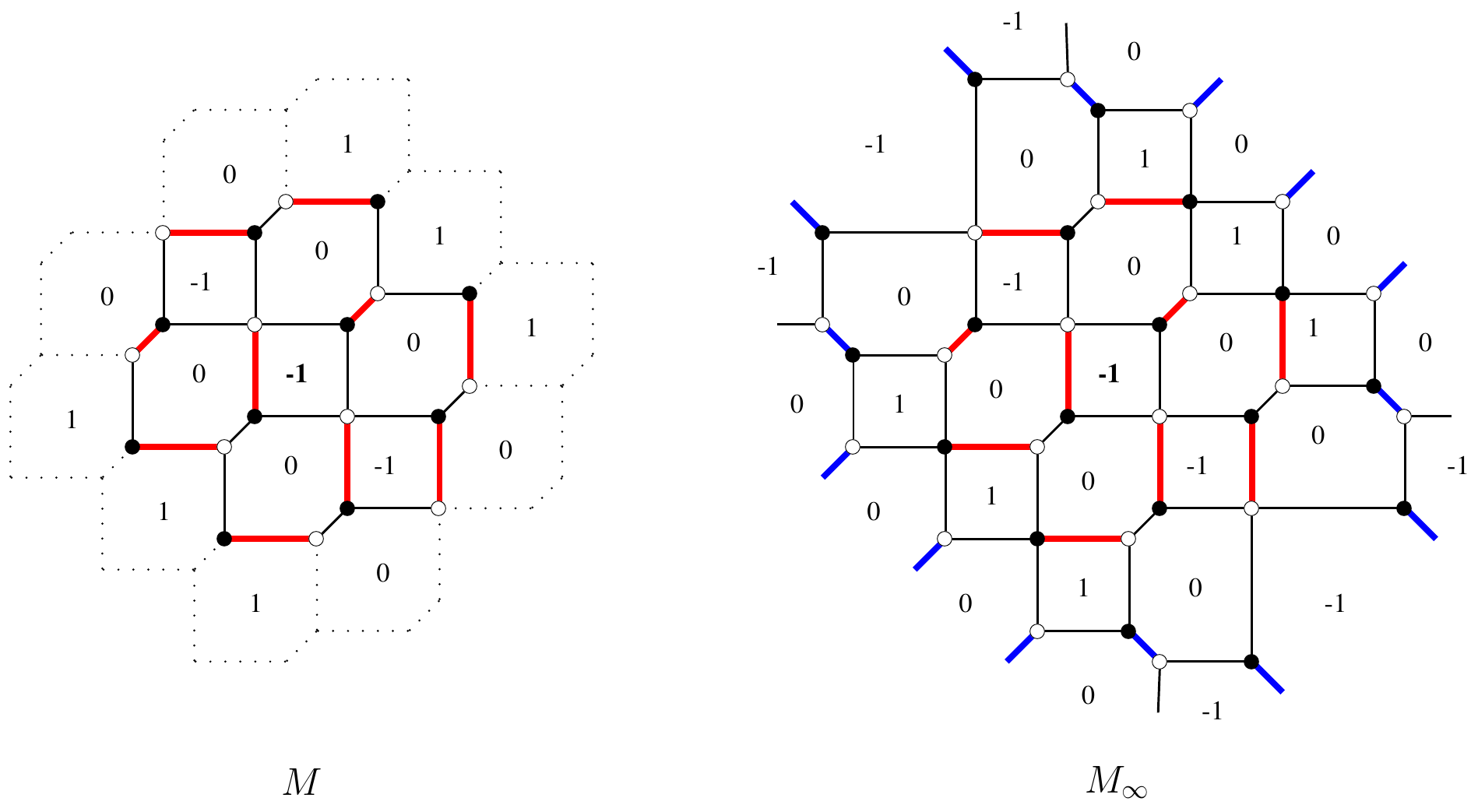}\end{center}
  \caption{A perfect matching $M$ of $G$ and its corresponding acceptable perfect matching $M_\infty$ of $G_\infty$. }
  \label{fig_exinfcomp}
\end{figure}

From \eqref{eq:weightpreserve} and Proposition \ref{prop:infcomp}, we can see that Theorem \ref{thm:main} is equivalent to the following theorem.

\begin{thm}[Perfect matching solution for $G_\infty$]\label{thm:maininf}
  Let $p$ and $\k$ be as in the assumption of Theorem \ref{thm:main} and $\k_p$ be defined as in \eqref{eq:fun&topsurface}.
  We have
  \[
    T_{i_0,j_0,k_0} = \sum_{M} w_p(M)w_f(M)
  \]
  where the sum runs over all the acceptable perfect matchings of $G_\infty=G_{\k_p}.$
\end{thm}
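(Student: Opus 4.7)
The plan is to closely follow Speyer's proof of the coefficient-free case while adding the bookkeeping of the pairing-weight $w_p$. We induct on the number $n$ of downward mutations separating $\k_p$ from $\proj_p$. Since $G_{p,\k}$ and $G_{p,\k_p}$ have identical closed faces and interior edges, and both sides of the claimed identity depend only on data indexed by $\mathring{F}\cup\partial F$, we may reduce to the case $\k=\k_p$ from the outset.

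For the base case $n=0$, we have $\k=\proj_p$ and $k_0=\k(i_0,j_0)$, so $T_{i_0,j_0,k_0}=t_{i_0,j_0}$. The interior $\mathring{F}$ is empty, and a direct inspection via Table \ref{tab:quiver&k} shows $G_\infty$ admits a single acceptable perfect matching $M_\infty$ whose edges are entirely diagonal. Since $M_\infty$ contains no horizontal edges, $w_p(M_\infty)=1$, and the face-weight contribution from the relevant open face $(i_0,j_0)$ gives $w_f(M_\infty)=t_{i_0,j_0}$, matching the desired value.

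For the inductive step, write $\k = \mu_{(i^*,j^*)}(\k')$, where $\k'$ is within $n-1$ downward mutations of $\proj_p$. Let $k^*$ denote the common height of the four lattice neighbors of $(i^*,j^*)$ on both surfaces, so that $\k'(i^*,j^*)=k^*+1$ and $\k(i^*,j^*)=k^*-1$. Theorem \ref{thm:Tsyscoef} applied at $(i^*,j^*,k^*)$ gives
\[
  t'_{i^*,j^*}\,t_{i^*,j^*} \;=\; J_{i^*,j^*,k^*}\,t_{i^*-1,j^*}\,t_{i^*+1,j^*} \;+\; I_{i^*,j^*,k^*}\,t_{i^*,j^*-1}\,t_{i^*,j^*+1},
\]
where $t'_{i^*,j^*}$ and $t_{i^*,j^*}$ are the initial values on $\k'$ and $\k$, respectively. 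The passage from the infinite graph for $\k'$ to that for $\k$ is a local urban renewal at $(i^*,j^*)$ followed by collapses of newly created degree-$2$ vertices; the resulting correspondence between acceptable perfect matchings is catalogued in Figures \ref{fig:matching-mutation}, \ref{fig_urbanopen} and \ref{fig_urbaneast}. Combining the inductive hypothesis for $\k'$ with the above recurrence, we must verify that the matching sum over the graph for $\k$ equals the result of substituting the recurrence for $t'_{i^*,j^*}$ into the sum for $\k'$, which reduces to a local weight check at the mutated face.

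The main technical obstacle is verifying that the pairing-weight $w_p$ transforms correctly under this local move. Unlike $w_f$, which is a product over faces and transforms locally exactly as in Speyer's original argument, $w_p$ is defined through a global partnering of horizontal edges within each column; a local urban renewal at $(i^*,j^*)$ can, in principle, reassign partners throughout the column $\{i=i^*\}$. The crucial assertion to check is that the net change in $w_p$ across the bijection is precisely the factor $J_{i^*,j^*,k^*}$ when the new matchings use horizontal edges at $(i^*,j^*)$, and the factor $I_{i^*,j^*,k^*}$ when they use vertical ones, in agreement with the two terms of the recurrence. The verification splits along whether $(i^*,j^*)\in\mathring{F}$ or $(i^*,j^*)\in\partial F$ and, in the latter case, by the octant of $\partial F$ containing $(i^*,j^*)$, matching the case decomposition already drawn in Figures \ref{fig:matching-mutation}, \ref{fig_urbanopen} and \ref{fig_urbaneast}.
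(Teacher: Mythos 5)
Your overall strategy coincides with the paper's: reduce to $\k=\k_p$, induct on the number of downward mutations from $\proj_p$, dispose of the base case by inspecting the unique acceptable matching of $G_{\proj_p}$, and match the two terms of the recurrence \eqref{eq:tsyscoef} against the local configurations at the mutated face. However, there is a genuine gap: the step you yourself flag as ``the main technical obstacle'' --- that the pairing-weight changes by exactly $J_{i^*,j^*,k^*}$ across the urban renewal --- is announced but never carried out, and that verification \emph{is} the proof. Two things are missing. First, the correspondence of acceptable matchings under the local move is not uniformly one-to-one: a matching of type (A) at the face corresponds to \emph{two} matchings (A1$'$), (A2$'$) of the new graph, while the two matchings of types (C1) and (C2) correspond to a \emph{single} matching (C$'$); the identity to be proved is an equality of sums over these fibers, and this bookkeeping must be set up explicitly before any weight comparison makes sense. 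Second, the pairing-weight ratios must actually be computed. In case (A2$'$) one checks that the new matching acquires exactly one extra allowed pair $\binom{N(i^*,j^*)}{S(i^*,j^*)}$, contributing $J_{i^*,j^*,k^*}$. In case (C) the change is genuinely non-local within the column $i=i^*$: the type-(C2) matching carries two allowed pairs $\binom{N(i^*,j^*-1)}{S(i^*,j_1)}$ and $\binom{N(i^*,j_2)}{S(i^*,j^*+1)}$ that merge into the single pair $\binom{N(i^*,j_2)}{S(i^*,j_1)}$ of (C$'$), and one must verify both that the rest of the column's pairing is untouched and that the telescoping ratio of the products of $c_{i^*,b}$ equals exactly $J_{i^*,j^*,k^*}$. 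Without these computations the argument is an outline rather than a proof.

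A smaller but substantive point: the pairing-weight is a product of $c_{i,a}$'s running in the $j$-direction within a single column, so it can only ever produce factors of the form $J_{i,j,k}$; it cannot produce $I_{i^*,j^*,k^*}=\prod_a c_{i^*+a,j^*}$, which runs in the $i$-direction. Your claim that the vertical configurations pick up ``the factor $I_{i^*,j^*,k^*}$'' is therefore only true because the standing hypothesis $\k\geq\fund$ forces $k^*\geq 0$ and hence $I_{i^*,j^*,k^*}=1$. The correct mechanism is that the pairing weight is simply unchanged for those configurations, and this is exactly where the hypothesis $\k\geq\fund$ enters; as written, your phrasing obscures the one place the hypothesis is used.
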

\begin{proof}
  Since $T_{i_0,j_0,k_0}$ depends only on $G_{p,\k}=G_{p,\k_p}$, we can assume without loss of generality that $\k = \k_p$.
  We will prove the theorem by using induction on the number of downward mutations from the top-most stepped surface $\proj_p$ to $\k$.
  
  The base case is when $\k = \proj_p$.
  The graph $G_{\proj_p}$ is shown in Figure \ref{fig:p}.
  There is only one acceptale perfect matching and its weight is $t_{i_0,j_0}=T_{i_0,j_0,k_0}$.
  So the theorem holds for the base case.
  
  Assuming that the theorem holds for any stepped surfaces away from $\proj_p$ by less than $n$ downward mutations, we let $\k$ be a surface obtained from $\proj_p$ by $n$ downward mutations.
  Then we can find an intermediate surface $\k'$ such that it is obtained from $\proj_p$ by $n-1$ downward mutations and $\k$ is obtained from $\k'$ by one downward mutation at $(i,j)$.
  We also assume that $\k(i,j)=k-1$ and $\k'(i,j)=k+1$ for some $k\in\Z$.
  By the induction hypothesis we have
  \[
    T_{i_0,j_0,k_0} = \sum_{\text{acceptable }M\text{ of }G_{\k'}} w_p(M)w_f(M).
  \]
  
  Let $M$ be any acceptable perfect matching of $G_{\k'}$.
  By Proposition \ref{prop:main}, the face $(i,j)$ of $G_{\k'}$ is a square of type (S2) in Figure \ref{fig:mutatable}.
  Then the matching $M$ at the face $(i,j)$ must be one of the 7 cases in the first column of Figure \ref{fig:matching-mutation}.
  
  If $M$ is of type $(A)$ at $(i,j)$, there are two matchings $M_{A1'}$ and $M_{A2'}$ of $G_{\k}$ of type $(A1')$ and $(A2'),$ respectively, such that the matchings are the same except locally at the face $(i,j)$.
  We then have
  \begin{align*}
    w_p(M_{A1'})w_f(M_{A1'}) &= \frac{T_{i,j-1,k}T_{i,j+1,k}}{T_{i,j,k-1}T_{i,j,k+1}}w_p(M)w_f(M),\\
    w_p(M_{A2'})w_f(M_{A2'}) &= \frac{J_{i,j,k}T_{i-1,j,k}T_{i+1,j,k}}{T_{i,j,k-1}T_{i,j,k+1}}w_p(M)w_f(M).
  \end{align*}
  The term $J_{i,j,k}$ in the second equation came from an extra pair $\binom{ N(i,j)}{ S(i,j)}$ in $M_{A2'}$ which gives an extra term $J_{i,j,k}$ to the pairing-weight.
  By \eqref{eq:tsyscoef}, we have
  \begin{align}
    w_p(M_{A1'})w_f(M_{A1'})+w_p(M_{A2'})w_f(M_{A2'}) = w_p(M_A)w_f(M_A).
  \label{eq:a}
  \end{align}

  If $M$ is of type $(B)$, there exists a unique corresponding matching $M_{B'}$ of $G_\k$ of type $(B')$.
  We see that the weight of $M$ and $M_{B'}$ are equal.
  That is
  \begin{align}
    w_p(M_{B'})w_f(M_{B'}) = w_p(M)w_f(M).
  \label{eq:b}
  \end{align}

  If $M$ is of type $(C1)$ (resp. $(C2)$), let $M'$ be another matching of $G_{\k'}$ of type $(C2)$ (resp. $(C1)$) which is the same as $M$ except for the two edges at the face $(i,j)$.
  Without loss of generality, we assume that $M$ is of type $(C1)$ and $M'$ is of type $(C2)$.
  Then there exists a corresponding perfect matching $M_{C'}$ of $G_\k$ of type $(C')$.
  We then have
  \[
    w_p(M)w_f(M) = \frac{T_{i,j-1,k}T_{i,j+1,k}}{T_{i,j,k-1}T_{i,j,k+1}}w_p(M_{C'})w_f(M_{C'}).
  \]

  To write $w_p(M')$ in terms of $w_p(M_{C'})$, we first notice that there must be two other edges $S(i,j_1),N(i,j_2)\in M'$ where $j_1<j<j_2$ such that both pairs $\binom{N(i,j-1)}{ S(i,j_1)}$ and $\binom{N(i,j_2)}{S(i,j+1)}$ are in the perfect pairing of $M'$, while in its corresponding $M_{C'}$ there is only $\binom{N(i,j_2)}{S(i,j_1)}$.
  Thus we have
  \[
    w_p(M') = \frac{\prod_{b=j_1-k_1-1}^{j-1+k+1} c_{i,b} \prod_{b=j+1-k-1}^{j_2+k_2+1} c_{i,b}}{\prod_{b=j_1-k_1-1}^{j_2+k_2+1} c_{i,b}}w_p(M_{C'})= J_{i,j,k}w_p(M_{C'}),
  \]
  and so
  \[
    w_p(M')w_f(M') = \frac{J_{i,j,k}T_{i-1,j,k}T_{i+1,j,k}}{T_{i,j,k-1}T_{i,j,k+1}}w_p(M_{C'})w_f(M_{C'}).
  \]
  Hence,
  \begin{align}
    w_p(M)w_f(M)+w_p(M')w_f(M') = w_p(M_{C'})w_f(M_{C'}).
  \label{eq:c}
  \end{align}
  By \eqref{eq:a}, \eqref{eq:b}, \eqref{eq:c} and the induction hypothesis, we can conclude that
  \[  
    \sum_{\text{acceptable }M\text{ of }G_{\k}} w_p(M)w_f(M) = \sum_{\text{acceptable }M\text{ of }G_{\k'}} w_p(M)w_f(M) = T_{i_0,j_0,k_0}. 
  \]
  So the statement holds for $\k$. By the induction, we proved the theorem.
\end{proof}

\begin{figure}
  \begin{center}\includegraphics{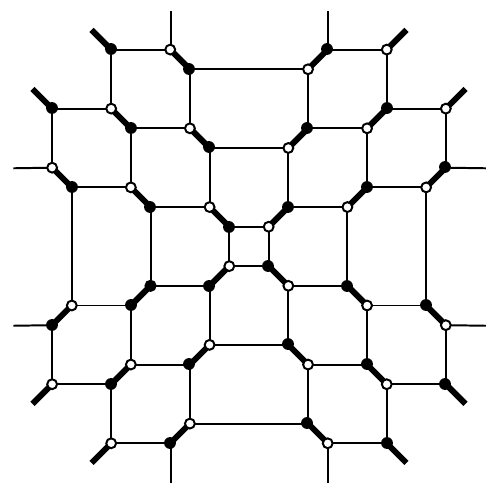}\end{center}
  \caption{
    The only one acceptable perfect matching of $G_{\proj_p}$.
    The center face of the graph is $(i_0,j_0)$.
  }
  \label{fig:p}
\end{figure}

Now we have proved Theorem \ref{thm:main} and Theorem \ref{thm:maininf}.
In the proof of Theorem \ref{thm:maininf}, we notice that for any acceptable perfect matching $M_\infty$ of $G_\infty$, any face outside $\mathring{F}\cup\partial F$ always gives $1$ for its face-weight.
Also edges in $M_\infty\setminus E(G)$ are all diagonal, so they will not contribute any weight to the partition function.
We then have the following perfect-matching solution for the closure $\overline{G}$ of $G$.

\begin{thm}[Perfect matching solution for $\overline{G}$]\label{thm:mainclosure}
  Let $p$ and $\k$ be as in the assumption of Theorem \ref{thm:main} and $\overline{G}$ be the closure of $G=G_{p,\k}$ defined as in Definition \ref{defn:Gclosure}.
  We have
  \[
    T_{i_0,j_0,k_0} = \sum_{\overline{M}\in\overline{\mathcal{M}}} w_p(\overline{M})w_f(\overline{M})
  \]
  where $\overline{\mathcal{M}} := \left\{ \overline{M}= M\cup \diag(E(\overline{G})\setminus E(G)) \mid M\in\mathcal{M} \right\}$, 
  $\diag(A)$ is the set of all diagonal edges in $A$, and $\mathcal{M}$ is the set of all perfect matchings of $G$.
\end{thm}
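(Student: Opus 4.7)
The plan is to derive Theorem \ref{thm:mainclosure} essentially as a corollary of Theorem \ref{thm:maininf}, using the fact that $\overline{G}$ sits as a finite subgraph between $G$ and $G_\infty$ and that, for acceptable matchings, the interesting combinatorial data is already captured inside $\overline{G}$.

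First, I would define the restriction map sending an acceptable perfect matching $M_\infty$ of $G_\infty$ to $\overline{M}:=M_\infty\cap E(\overline{G})$. Combining the definition of ``acceptable'' (so that $M_\infty\setminus E(G)=\diag(E(G_\infty)\setminus E(G))$) with the inclusions $E(G)\subseteq E(\overline{G})\subseteq E(G_\infty)$, one sees
\[
  \overline{M}=M\cup \diag\bigl(E(\overline{G})\setminus E(G)\bigr),
\]
where $M=M_\infty\cap E(G)$ is the underlying perfect matching of $G$. Thus the image of this restriction is exactly the set $\overline{\mathcal{M}}$ appearing in the theorem, and composing with the bijection of Proposition \ref{prop:infcomp} shows that the restriction is a bijection between acceptable matchings of $G_\infty$ and elements of $\overline{\mathcal{M}}$.

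Next, I would verify that each $\overline{M}$ is indeed a perfect matching of $\overline{G}$. Every vertex of $\overline{G}$ is also a vertex of $G_\infty$, so it is covered by exactly one edge of $M_\infty$; what must be checked is that this edge lies in $E(\overline{G})$. For vertices lying strictly in the interior of $\mathring F$ this is automatic, and for vertices on the boundary created by closing the open faces, one can appeal to the local analysis of Proposition \ref{prop:modifiedopenface} together with the acceptability of $M_\infty$ (which forces all non-$G$ edges to be diagonals bordering the very face shapes listed in Figure \ref{fig_modifiedopenface}). This local check is the main technical obstacle, but it only involves the sixteen explicit face types.

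Finally, for the weight equality $w_p(M_\infty)w_f(M_\infty)=w_p(\overline{M})w_f(\overline{M})$, I would invoke the two observations already highlighted in the remark preceding the theorem: (i) every face of $G_\infty$ lying outside $\mathring F\cup\partial F$ contributes $1$ to $w_f$, so only the faces in $\mathring F\cup\partial F$, which are common to $\overline{G}$ and $G_\infty$, matter; and (ii) the edges in $M_\infty\setminus\overline{M}$ are all diagonal and therefore never enter any allowed pair, so they do not affect $w_p$. Substituting into Theorem \ref{thm:maininf} and summing over $\overline{\mathcal{M}}$ via the bijection above yields the claimed identity.
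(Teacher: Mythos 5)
Your overall route is the same as the paper's: Theorem \ref{thm:mainclosure} is obtained from Theorem \ref{thm:maininf} by restricting acceptable matchings of $G_\infty$ to $\overline{G}$ and observing that (i) faces outside $\mathring{F}\cup\partial F$ contribute $1$ to $w_f$ and (ii) the discarded edges are all diagonal and hence invisible to $w_p$. The bijection with $\overline{\mathcal{M}}$ and the weight equality are exactly the paper's (very brief) argument, so on those points you are fine.

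One correction, though: the step you single out as ``the main technical obstacle'' --- verifying that each $\overline{M}=M\cup\diag(E(\overline{G})\setminus E(G))$ is a perfect matching of $\overline{G}$ --- is both unnecessary and, in general, false. A boundary vertex of $\overline{G}$ may be matched in $M_\infty$ by a diagonal edge of $G_\infty$ lying outside $E(\overline{G})$, so that vertex is left uncovered by $\overline{M}$; the paper says this explicitly right after Definition \ref{defn:Mbar0} (``We note that $\overline{M}$ and $\overline{M}_0$ are not necessary perfect matchings of $\G$''). Fortunately the theorem does not need this: $w_f$ and $w_p$ make sense for the edge sets $\overline{M}$ as they stand, and the identity follows from the bijection and the weight equality alone. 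You should simply delete that verification rather than attempt the sixteen-case local check, which would not close.
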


We now have a combinatorial expression of $T_{i_0,j_0,k_0}$ as a partition function of face-weight and pairing-weight over all perfect matchings of a graph.
In the next section, we will combine the two weights together and construct an edge-weight.
This will be the first step toward our next aim to construct a solution in terms of networks, analog to \cite{DF14} for the coefficient-free T-system.


\section{Perfect-matching solution via edge-weight} \label{sec_edge-weight}
	Now that we have multiple versions of the perfect matching solution to the T-system with principal coefficients in Theorem \ref{thm:main}, Theorem \ref{thm:maininf} and Theorem \ref{thm:mainclosure}, our next goal is to find a network solution analog to the network solution for coefficient-free T-systems studied in \cite{DFK13,DF14}.
	One big advantage of the network solution is its explicit solution via the network matrices.
	In the perfect matching solution, we need to enumerate all the perfect matchings of the graph $G$ in order to compute the solution.
	For the network solution, we associate an initial data stepped surface with a product of network matrices.
	Then the solution is just a certain minor of the product.
	
	In order to get the network solution, we first transform the face-weight and the pairing-weight studied in the last section to the edge-weight $w_e$ (Definition \ref{defn:edgeweight}) on edges of the closure $\G$ of $G$, which also gives us a new perfect-matching solution but with the edge-weight $w_e$ (Theorem \ref{thm:edgesol}).
	This solution will be used to construct a nonintersecting-path solution in the next section.
	We also note that our edge-weight coincide with the weight studied in \cite{MS10,JMZ13} in the case when all $c_{i,j}=1$.

	\begin{defn}[Edge-weight $w_e$]\label{defn:edgeweight}
		Let $\k$ be an admissible initial stepped surface with respect to $p$, $\G$ be the closure of the graph $G=G_{p,\k}$.
		For each edge of $\G$ we assign the \defemph{edge-weight} $w_e$ as follows:
		\begin{center}
			\includegraphics[trim=1cm 0cm 1cm 0cm, scale=0.9]{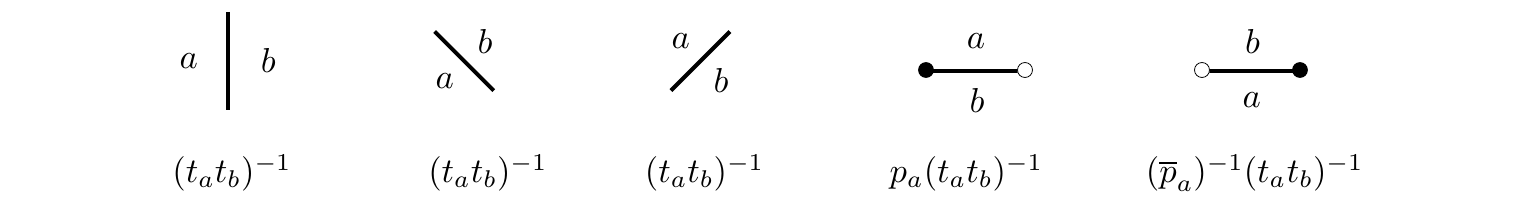}
		\end{center}
		where $p_a$ and $\overline{p}_a$ are the following formal products
		\[
			p_a = \prod_{\alpha=j-k-1}^\infty c_{i,\alpha} \quad\text{and} \quad \overline{p}_a = \prod_{\alpha=i+k+2}^\infty c_{i,\alpha}
		\]
		when $a=(i,j)$ and $k=\k(i,j).$ We also assume that $t_a=1$ when $a\notin F(\G)$ and $p_a=\overline{p}_a = 1$ when $a\notin F(G)$.
	\end{defn}

	\begin{defn}\label{defn:Mbar0}
		Let $\mathcal{M}$ be the set of all perfect matching of $G.$ For a matching $M\in\mathcal{M}$, we let
		\[
			\overline{M} := M\cup\diag(E(\G)\setminus E(G))
		\]
		be its corresponding (not necessary perfect) matching of $\G$ where $\diag(A)$ is the set of all diagonal edges in $A$ for $A\subseteq E(\G).$
		Also let
		\begin{align*}
			\overline{\mathcal{M}} &:= \left\{ \overline{M} \mid M\in\mathcal{M}\right\},\\
			\overline{M}_0 &:= \left\{\text{all white-black horizontal and diagonal edges of }\G\right\}.
		\end{align*}
		Then the \defemph{edge-weight} of a matching $\overline{M}\in\overline{\mathcal{M}}$ is
		\begin{align}
			w_e(\overline{M}) := \prod_{x\in \overline{M}} w_e(x).
			\label{eq:edgeweight}
		\end{align}
	\end{defn}

	We note that $\overline{M}$ and $\overline{M}_0$ are not necessary perfect matchings of $\G.$
	Also for $j_1\leq j_2$,
	\[
		p_{(i,j_1)} (\overline{p}_{(i,j_2)})^{-1} = \prod_{\alpha = j_1-\k(i,j_1)-1}^{j_2+\k(i,j_2)+1} = w_p\binom{N(i,j_2)}{S(i,j_1)}.
	\]
	By Proposition \ref{prop:main}, the product in \eqref{eq:edgeweight} is indeed a finite product of pairing-weights, hence a finite product of $c_{i,j}$'s.
	
	The following lemma interprets the face-weight of a matching $\overline{M}$ as a function of $\overline{M}$ and our special matching $\overline{M}_0$.

	\begin{lemma}\label{lem:faceweightedgeweight}
		For $x\in F(\G)$ and $\overline{M}\in\overline{\mathcal{M}}$, we have
		\[
			w_f(x) = t_x^{N_x-D_x}
		\]
		where $N_x=\left| \{ e\in \overline{M}_0 \mid e\text{ is a side of }x \} \right|$ and $D_x = \left| \{ e\in \overline{M} \mid e\text{ is a side of }x \} \right|$.
	\end{lemma}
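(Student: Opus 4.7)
The plan is to match the exponents on both sides of the claimed equality. Write $n = a+b$ for the number of sides of $x$ that belong to $E(G)$, and split into the cases $x \in \mathring{F}$ and $x \in \partial F$.

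Suppose first that $x \in \mathring{F}$. Then every side of $x$ lies in $E(G)$, and no edge of $\diag(E(\overline{G}) \setminus E(G))$ is a side of $x$; hence $D_x = a$. The quantity $N_x$ equals the number of white-black horizontal sides of $x$ plus the number of diagonal sides of $x$. A direct inspection of the face shapes of $G_\k$ given by Table \ref{tab:quiver&k} (squares in a flat region, with exactly one white-black horizontal side and no diagonal; hexagons on a step, with exactly one white-black horizontal side and one diagonal) shows that $N_x = n/2 - 1$ in every case. Since $n$ is even, we conclude $N_x - D_x = n/2 - 1 - a = (b-a)/2 - 1 = \lceil (b-a)/2 \rceil - 1$, matching the exponent in the definition of $w_f(x)$ for closed faces.

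Now suppose $x \in \partial F$. The face $x$ is open in $G$ but closed in $\overline{G}$ by the edges enumerated in Proposition \ref{prop:modifiedopenface}. Let $\delta_d^x$ denote the number of added diagonal sides of $x$. By definition $\overline{M} = M \cup \diag(E(\overline{G}) \setminus E(G))$, the added diagonals are in $\overline{M}$ while the added horizontal and vertical edges are not, so $D_x = a + \delta_d^x$. Viewing $x$ as a closed face of $\overline{G}$ and repeating the enumeration above gives $N_x = n^{\overline{G}}(x)/2 - 1$, where $n^{\overline{G}}(x)$ is the total number of sides of $x$ in $\overline{G}$. A direct check of the sixteen configurations in Proposition \ref{prop:modifiedopenface} verifies that in every case the added edges satisfy the balance $n^{\overline{G}}(x) - 2\delta_d^x = n + 2$; substituting yields $N_x - D_x = (n + 2\delta_d^x + 2)/2 - 1 - a - \delta_d^x = n/2 - a = \lceil (b-a)/2 \rceil$, matching the exponent in the definition of $w_f(x)$ for open faces.

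The main obstacle is the combinatorial identity that for every closed face $y$ of $\overline{G}$, the number of white-black horizontal sides plus the number of diagonal sides of $y$ equals half the total number of sides minus one; together with the balance relation on added edges for the sixteen configurations of Proposition \ref{prop:modifiedopenface}. Both are intuitive — they amount to saying that $\overline{M}_0$ leaves exactly two boundary vertices of each face unmatched — but neither reduces to a single clean argument, so the verification is a finite but tedious enumeration over admissible face shapes.
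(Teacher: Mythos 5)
Your overall strategy is the same as the paper's: everything reduces to the identity $N_x = S/2-1$ for the faces of $\G$ (where $S$ is the number of sides), together with bookkeeping of how the closure edges shift the counts at open faces; and your arithmetic in both cases --- including the balance relation ``number of added non-diagonal sides equals number of added diagonal sides plus $2$,'' which is exactly the part the paper hides inside its ``we can easily check'' sentence --- is sound. The genuine gap is in your identification of $N_x$. You read $\overline{M}_0$ as containing \emph{all} diagonal edges, so that $N_x$ becomes the number of white-black horizontal sides plus the number of all diagonal sides of $x$. But $\overline{M}_0$ consists of the white-black horizontal edges together with only the \emph{white-black} diagonal edges (white endpoint on the left): this reading is forced by the proof of Proposition \ref{prop:bijectionbar}, which separately speaks of ``white-black horizontal and black-white diagonal edges,'' and by the role of $M_0$ in Lemma \ref{lem1}, where each internal vertex must meet exactly one edge of $M_0$, namely the unique edge joining it to the neighbour of the opposite colour on its left or right.

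Under your reading the key identity is simply false. A hexagonal face of $\G$ has \emph{two} diagonal sides, not one as your parenthetical asserts (a face $(i,j)$ always has four sides dual to the four lattice arrows at $(i,j)$, plus one diagonal side for each corner block carrying a diagonal, and for a hexagon there are two such corners), so your count would give $1+2=3\neq 2$; an octagonal face, which the paper's proof explicitly allows via $S\in\{4,6,8\}$ and which your enumeration omits, has four diagonal sides, giving $1+4=5\neq 3$. With the correct reading the identity does hold: every face of $\G$ has exactly one white-black horizontal side, and exactly half of its $d=S-4$ diagonal sides are white-black, whence $N_x = 1+d/2 = S/2-1$; this is precisely the check the paper performs over Figure \ref{fig_468}. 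So the skeleton of your argument survives --- in fact your explicit treatment of the sixteen open-face closures is more detailed than the paper's --- but the ``finite but tedious enumeration'' you defer to is not the one you described, and as described it would not close.
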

	\begin{proof}
		We can easily check that for all $x\in F(\G)=\mathring{F}\cup\partial F,$ we get $w_f(x) = t_x^{\lceil \frac{N-D}{2}\rceil-1}$ where $N$ and $D=D_x$ are the numbers of sides of $x$ which are not in $\overline{M}$ and are in $\overline{M}$, respectively.
		Let $S\in\{4,6,8\}$ be the number of sides of $x$. Then $N=S-D.$
		So 
		\[
			w_f(x) = t_x^{\lceil \frac{S-2D}{2}\rceil-1}=t_x^{S/2-D-1}.
		\]
		Since $x$ must be one of the cases in Figure \ref{fig_468}, we have $S/2-1 = N_x.$
		Hence $w_f(x) = t_x^{N_x-D_x}$.
	\end{proof}

	\begin{figure}
		\begin{center}\includegraphics[scale=0.8]{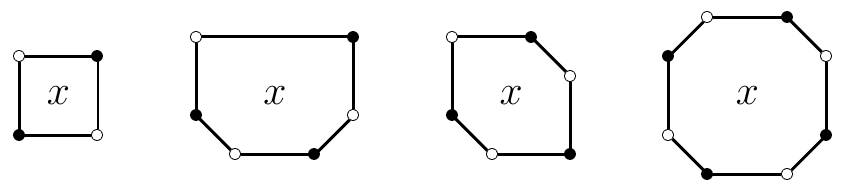}\end{center}
		\caption{All possible faces of $\G$ up to $\frac{\pi}{2}$ rotation and $i$-axis reflexion.}
		\label{fig_468}
	\end{figure}

	\begin{thm}[Perfect-matching solution for $\G$ with the edge-weight $w_e$]\label{thm:edgesol}
		Let~ $\k$ be an admissible initial data stepped surface with respect to $p=(i_0,j_0,k_0)$, $\overline{M}$ and $\overline{M}_0$ be defined as in Definition \ref{defn:Mbar0}.
		Then
		\[
			T_{i_0,j_0,k_0} = \sum_{\overline{M}\in\overline{\mathcal{M}}} w_e(\overline{M}) \Big/ w_e(\overline{M}_0)\big|_{c_{i,j}=1}
		\]
		where $w_e(\overline{M}_0)\big|_{c_{i,j}=1}$ denotes the substitution $c_{i,j}=1$ for any $(i,j)\in\Z^2.$
	\end{thm}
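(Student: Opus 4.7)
The plan is to derive Theorem \ref{thm:edgesol} from Theorem \ref{thm:mainclosure} by a termwise identification of monomials. Concretely, I would show that for every $\overline{M}\in\overline{\mathcal{M}}$,
\[
w_p(\overline{M})\,w_f(\overline{M}) \;=\; \frac{w_e(\overline{M})}{w_e(\overline{M}_0)\big|_{c_{i,j}=1}},
\]
and then sum over $\overline{\mathcal{M}}$. Since the coefficient variables $c_{i,j}$ enter $w_e$ only through the $p_a$ and $\overline{p}_a$ factors, which decorate only horizontal edges, the edge-weight factors as $w_e(\overline{M})=T(\overline{M})\,C(\overline{M})$ with $T$ a Laurent monomial in the $t_{i,j}$'s and $C$ a monomial in the $c_{i,j}$'s. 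The specialization $w_e(\overline{M}_0)|_{c_{i,j}=1}$ has no $c$-content, so the verification splits into two independent checks: matching the $t$-content against $w_f(\overline{M})$, and matching the $c$-content against $w_p(\overline{M})$.

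For the $t$-content, I would proceed face-by-face. Fix $x\in F(\overline{G})$. Reading the $t$-factors attached to the edges in the local pictures of Definition \ref{defn:edgeweight}, the exponent of $t_x$ contributed to $T(\overline{M})$ depends only on which sides of $x$ lie in $\overline{M}$. A quick inspection (handling the three edge types: horizontal, vertical, diagonal) shows that each side of $x$ present in a matching contributes the same fixed power of $t_x$, so the exponent of $t_x$ in $T(\overline{M})/T(\overline{M}_0)$ is a constant times $N_x-D_x$ with $N_x$, $D_x$ as in Lemma \ref{lem:faceweightedgeweight}. Comparing with Lemma \ref{lem:faceweightedgeweight} pins the constant to $1$, so the ratio equals $w_f(x)$; taking the product over $x$ yields $w_f(\overline{M})$.

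For the $c$-content, I would work column-by-column. In column $i$, Proposition \ref{prop:main}(2)--(3) says that among the horizontal edges of $\overline{M}$ in that column the numbers of $S$- and $N$-edges are equal, with every bottom-to-top prefix containing at least as many $S$-edges as $N$-edges. This is exactly the Dyck condition guaranteeing that the pairing algorithm of Section \ref{subsec:weight} terminates, matching every horizontal edge into an allowed pair $\binom{N(i,j_2)}{S(i,j_1)}$ with $j_1\le j_2$. In the edge-weight, the $S$-edge contributes a factor involving $p_{(i,j_1)}$ and the $N$-edge a factor involving $\overline{p}_{(i,j_2)}^{-1}$; the identity
\[
p_{(i,j_1)}\,\overline{p}_{(i,j_2)}^{-1} \;=\; \prod_{\alpha=j_1-\k(i,j_1)-1}^{j_2+\k(i,j_2)+1} c_{i,\alpha} \;=\; w_p\!\binom{N(i,j_2)}{S(i,j_1)}
\]
telescopes the infinite tails against each other, and taking the product over all pairs in all columns gives $C(\overline{M})=w_p(\overline{M})$.

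The main obstacle is justifying the telescoping: the $p_a$ and $\overline{p}_a$ are formal infinite products, and one must check that the pairing induced by bottom-to-top cancellation is \emph{the same} as the pairing algorithm of Definition \ref{defn:pairweight}. This is where Proposition \ref{prop:main} does the essential work, so the key is to invoke it column-by-column and observe that both pairings produce the unique ``non-crossing'' matching of $S$'s to $N$'s above them. The degenerate case $k_0=\k(i_0,j_0)$ (where $G$ is empty and the identity collapses to $t_{i_0,j_0}=t_{i_0,j_0}$) and the boundary cases around the open faces should be checked separately to confirm that the face-weight exponent convention in Definition \ref{defn:faceweight} (with the special ``$\partial F$'' exponent $\lceil (b-a)/2\rceil$ rather than $\lceil (b-a)/2\rceil-1$) is exactly what makes Lemma \ref{lem:faceweightedgeweight} hold uniformly over all faces of $\overline{G}$.
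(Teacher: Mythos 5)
Your proposal is correct and follows essentially the same route as the paper: it derives the identity termwise from Theorem \ref{thm:mainclosure}, handling the $t$-content via Lemma \ref{lem:faceweightedgeweight} and the $c$-content via the telescoping identity $p_{(i,j_1)}\overline{p}_{(i,j_2)}^{-1}=w_p\binom{N(i,j_2)}{S(i,j_1)}$ justified by Proposition \ref{prop:main}. The one ``obstacle'' you flag is actually immaterial, since the $c$-content is a product over the individual $S$- and $N$-edges and hence independent of which perfect pairing is used to group them.
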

	\begin{proof}
		Let $\overline{M}\in\overline{\mathcal{M}}$.
		By Lemma \ref{lem:faceweightedgeweight}, we get
		\begin{align*}
			w_f(\overline{M})= \prod_{x\in F(\G)}t_x^{N_x-D_x}= \prod_{x\in F(\G)}t_x^{N_x}\prod_{x\in F(\G)}t_x^{-D_x}.
    \end{align*}
    By Definition \ref{defn:edgeweight}, it equals to
    \begin{align*}
			w_f(\overline{M}) = {\Big(w_e(\overline{M}_0)\big|_{c=1}\Big)}^{-1} \prod_{x\in F(\G)}t_x^{-D_x}.
		\end{align*}
		Thus we have
		\begin{align}
			\prod_{x\in F(\G)}t_x^{-D_x} = w_f(\overline{M}) w_e(\overline{M}_0)\big|_{c=1}.
		\label{eq:1}
		\end{align}
		By the definition of $w_e$, we consider
		\begin{align*}
			w_e(\overline{M}) = \prod_{y\in \overline{M}}w_e(y) =	\prod_{\bullet\overset{a}{\text{---}}\circ\in\overline{M}}p_a \prod_{\circ\underset{b}{\text{---}}\bullet\in\overline{M}}\overline{p}_b \prod_{x\in F(\G)}t_x^{-D_x}.
    \end{align*}
    From \eqref{eq:1} and the fact that $\overline{M}\setminus M$ contains only diagonal edges, we then get
    \begin{align*}
			w_e(\overline{M})=\prod_{\bullet\overset{a}{\text{---}}\circ\in M}p_a \prod_{\circ\underset{b}{\text{---}}\bullet\in M}\overline{p}_b ~ w_f(\overline{M}) w_e(\overline{M}_0)\big|_{c=1}.
		\end{align*}
		Since
		\[
			\prod_{\bullet\overset{a}{\text{---}}\circ\in M}p_a \prod_{\circ\underset{b}{\text{---}}\bullet\in M}\overline{p}_b = w_p(\overline{M}),
		\]
		we conclude that $w_f(\overline{M})w_p(\overline{M}) = w_e(\overline{M})\big/w_e(\overline{M}_0)\big|_{c=1}$ for any $\overline{M}\in\overline{\mathcal{M}}.$
		By Theorem \ref{thm:mainclosure}, we have $T_{i_0,j_0,k_0} = \sum_{\overline{M}\in\overline{\mathcal{M}}} w_e(\overline{M}) \big/ w_e(\overline{M}_0)|_{c=1}.$
	\end{proof}
	
	Now we have a combinatorial expression of $T_{i_0,j_0,k_0}$ in terms of a partition function of edge-weight over all matchings $\G$.
	In the next section, we give an explicit bijection between perfect matchings and non-intersecting paths (with certain sources and sinks) in both $G$ and $\G$.
	Using this bijection, we are able to transform the perfect-matching solutions to a solution in terms of non-intersecting paths.


\section{Non-intersecting path solution}\label{sec_path}
	In this section, we provide an explicit bijection (Proposition \ref{prop:bijection}) between the perfect matchings of $G$ and the non-intersecting paths in the oriented graph $G$ with certain sources and sinks.
  It can be extended to a bijection between the matchings in $\overline{\mathcal{M}}$ of $\overline{G}$ and the non-intersecting paths in the oriented graph $\overline{G}$ with certain sources and sinks (Proposition \ref{prop:bijectionbar}).
	Using this bijection and a new weight $w_e'$ modified from the edge-weight $w_e$, we can write the solution to the T-system in terms of non-intersecting paths in $\overline{G}$ (Theorem \ref{thm_pathsol}).
	
	\subsection{Some setup}
		We first show some properties of the graph $G$ and $\overline{G}$.
		\begin{prop}
			$G$ and $\overline{G}$ are bipartite and connected.
		\end{prop}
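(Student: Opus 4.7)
The plan is to treat bipartiteness and connectedness separately, both by reduction to the local structure recorded in Table \ref{tab:quiver&k} and Figure \ref{fig_modifiedopenface}.

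For bipartiteness, I would invoke the two-coloring of $G_\k$ defined in Section \ref{subsec_graph}: a vertex is white if the quiver arrows around the corresponding face of $\Q_\k$ run counter-clockwise, and black if they run clockwise. Every edge of $G_\k$ is dual to a single arrow $A \to B$ of $\Q_\k$; this arrow has a counter-clockwise face on one side and a clockwise face on the other, so every edge of $G_\k$ — and therefore of $G$ and of $\overline{G}$ — joins vertices of opposite colors. This argument rests only on confirming, by inspection of Table \ref{tab:quiver&k}, that every face of $\Q_\k$ really does come with a well-defined CW or CCW orientation; the same verification extends to the new faces of $\overline{G}$ coming from $\partial F$ via Figure \ref{fig_modifiedopenface}.

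For connectedness, the first step is to observe that the face sets $F(G) = \mathring{F}$ and $F(\overline{G}) = \mathring{F} \cup \partial F$ are lattice-connected subsets of $\Z^2$: the defining inequality $|i-i_0|+|j-j_0| < k_0-\k(i,j)$ in \eqref{eq:opencloseface}, combined with the stepped-surface condition $|\k(i,j) - \k(i',j')| = 1$ for lattice-adjacent points, shows that $\mathring{F}$ is a diamond-shaped region around $(i_0,j_0)$, and $\partial F$ is its lattice boundary. The next step is to note that each face of $\overline{G}$, being a polygon of perimeter $4$, $6$, or $8$ as in Figure \ref{fig_468}, has a connected boundary cycle, and that two lattice-adjacent faces always share at least one graph edge, so their boundary cycles sit in the same connected component. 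Chaining face-to-face along a lattice path inside $F(G)$ (respectively $F(\overline{G})$) then produces a walk between any two prescribed vertices.

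The main obstacle is the final case analysis: checking that lattice-adjacent faces actually share an honest edge in the graph, and not merely a vertex. For closed-closed adjacency in $G$ this is a finite verification against the six pictures of Table \ref{tab:quiver&k}, while for adjacencies involving open faces in $\overline{G}$ one has to work through the sixteen modified shapes of Figure \ref{fig_modifiedopenface}. Neither check is deep, but together they are what makes the connectedness argument rigorous rather than merely plausible from the planar picture.
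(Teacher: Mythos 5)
Your argument is correct, but it is a genuinely different (and more self-contained) route than the one the paper takes: the paper's proof is a one-line citation of \cite[Section 3.5]{Speyer} for the statement about $G$, followed by the remark that the extension to $\overline{G}$ is immediate from Proposition \ref{prop:modifiedopenface}. What you supply instead is a direct proof. For bipartiteness, your duality observation is exactly right: every edge of $G_\k$ is dual to an arrow of $\Q_\k$, the boundary cycle of one adjacent face traverses that arrow forwards and the other backwards, so the two adjacent faces carry opposite orientations and hence opposite colors — and the paper has already asserted that all faces of $\Q_\k$ are oriented cycles, so no further checking is needed there. For connectedness, your reduction works because $\mathring{F}$ is lattice-star-shaped toward $(i_0,j_0)$ (moving one step toward $(i_0,j_0)$ decreases $|i-i_0|+|j-j_0|$ by $1$ while changing $\k$ by at most $1$, so membership in $\mathring{F}$ is preserved), every vertex of $G$ (resp.\ $\overline{G}$) lies on the boundary cycle of some face by the definition of the subgraph generated by $\mathring{F}$ (resp.\ $\mathring{F}\cup\partial F$), and lattice-adjacent quiver vertices are always joined by an arrow in every case of Table \ref{tab:quiver&k}, so lattice-adjacent faces always share a dual edge. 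You could even shortcut the sixteen-case check for pairs of open faces in Figure \ref{fig_modifiedopenface}: each open face is by definition lattice-adjacent to a closed face and shares an edge with it, so connectivity of $\overline{G}$ already follows from connectivity of $G$ together with the incidences between $\partial F$ and $\mathring{F}$ (the degenerate case $\mathring{F}=\emptyset$ being trivial). The trade-off is clear: the paper's proof is short but outsources the content; yours is longer but makes the proposition independent of \cite{Speyer} and makes explicit exactly which finite local verifications carry the result.
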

		\begin{proof}
			It was proved in \cite[Section 3.5]{Speyer} that $G$ is bipartite and connected. The extended result to $\overline{G}$ follows from Proposition \ref{prop:modifiedopenface}.
		\end{proof}

		\begin{defn}\label{defn:leftright}
			For two vertices $v,v'$ of a graph, we say that $v$ (resp. $v'$) is on the \defemph{left} (resp. \defemph{right}) of $v'$ (resp. $v$) if there is a sequence of vertices of the graph $v=v_0,v_1,v_2,\dots,v_n=v'$ such that any two consecutive vertices are connected by one of the following edges.
			\begin{center}
				\includegraphics{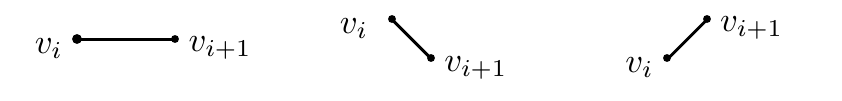}
			\end{center}
		\end{defn}


		We center the graph at the face $(i_0,j_0)$.
		Then the notions of the North-West, North-East, South-West and South-East of the center are well-defined.

		\begin{defn}\label{defn:direction}
			We denote $V_{\rm NW}(G)$, $V_{\rm NE}(G)$, $V_{\rm SW}(G)$ and $V_{\rm SE}(G)$ to be the set of all the vertices of a graph $G$ in the North-West, North-East, South-West and South-East of the center face $(i_0,j_0)$, respectively.
			Also let $V_{\rm leftNW}(G)$, $V_{\rm right NE}(G)$, $V_{\rm left SW}(G)$, $V_{\rm right SE}(G)$ be the set of all left-most NW vertices, right-most NE, left-most SW and right-most SE vertices of $G$, respectively.
			See Figure \ref{fig_ex} for an example.
		\end{defn}

		\begin{figure}
			\begin{center}\includegraphics[scale=0.8]{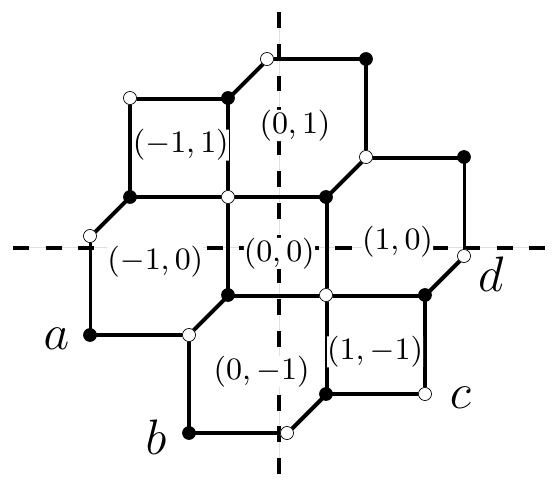}\end{center}
			\caption{With $(0,0)$ as the center face, we have $|V_{SW}(G)|=4,$ $|V_{SE}(G)|=6,$ $V_{\rm left SW}(G)=\{a,b\}$ and $V_{\rm right SE}(G)=\{c,d\}$.}
			\label{fig_ex}
		\end{figure}

		\begin{prop}\label{prop_g}
			$G$ has the following properties.
			\begin{enumerate}
				\item Vertices in $V_{\rm left SW}(G)\cup V_{\rm right NE}(G)$ are black.
				\item Vertices in $V_{\rm right SE}(G)\cup V_{\rm left NW}(G)$ are white.
				\item $|V_{\rm left SW}(G)|=|V_{\rm right SE}(G)|$.
				\item $|V_{\rm left NW}(G)|=|V_{\rm right NE}(G)|$.
			\end{enumerate}
		\end{prop}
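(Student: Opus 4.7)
The plan is to prove all four statements simultaneously by induction on the number $n$ of downward mutations transforming $\proj_p$ into $\k$. The base case $n=0$ is $\k=\proj_p$, for which $\mathring{F}=\emptyset$ and hence all four extreme-vertex sets are empty, so (1)--(4) hold vacuously. (The case $n=1$, where $G$ is a single square of type (S1), can also be verified directly from Table \ref{tab:quiver@edge-weight\tab:quiver&k}.)

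For the inductive step, assume the proposition holds for some $\k'$ and let $\k$ be obtained from $\k'$ by a single downward mutation at $(i,j)$. I split into two cases based on whether $(i,j)\in\mathring{F}(\k')$ (closed face) or $(i,j)\in\partial F(\k')$ (open face), mirroring the case split already used in the proof of Proposition \ref{prop:main}. In the closed-face case, $\mathring{F}(\k)=\mathring{F}(\k')$ and $G_{p,\k}$ is obtained from $G_{p,\k'}$ by urban renewal at $(i,j)$ followed by degree-2 vertex collapse; this is purely local and leaves the outer boundary of $G$ unchanged, so the four extreme-vertex sets and their colors are identical to those for $\k'$ and the induction hypothesis applies verbatim.

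In the open-face case, $\mathring{F}(\k)=\mathring{F}(\k')\cup\{(i,j)\}$ and the mutation attaches new vertices/edges to $G_{p,\k'}$. Here the argument branches on the position of $(i,j)$ relative to $(i_0,j_0)$ (NW, NE, SW, SE, or on one of the axes $i=i_0$, $j=j_0$) and on the local heights of $\k'$ around $(i,j)$. For each configuration I would read off from Table \ref{tab:quiver&k} the exact shape and vertex coloring of the newly closed face $(i,j)$; this pinpoints which vertices enter or leave each of the four extreme-vertex sets and verifies that they carry the correct color for (1) and (2). The critical observation for (3) and (4) is that the octahedron quiver's reflection symmetries about $i=i_0$ and $j=j_0$ act on these case lists: any modification at an open face on the SW boundary is mirrored by an analogous modification on the SE boundary of the same column (and likewise for NW/NE rows), so $|V_{\rm left SW}(G)|$ and $|V_{\rm right SE}(G)|$ always change by the same amount, preserving the cardinality equalities.

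The main obstacle will be the combinatorial bookkeeping in the open-face step: the catalogue of local configurations is large, and one has to be careful about mutations near the axes $i=i_0$, $j=j_0$ and near the four ``corners'' of $\mathring{F}$, where an open face borders two different quadrants at once. Leveraging the reflection symmetries to collapse the cases into one representative per symmetry class, and using the classification of the boundary faces already recorded in Figure \ref{fig_modifiedopenface}, should keep the verification tractable.
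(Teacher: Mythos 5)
There is a genuine gap, and it sits in your argument for parts (3) and (4). You claim that the reflection symmetries of the octahedron quiver about $i=i_0$ and $j=j_0$ force every modification at a SW open face to be ``mirrored'' by an analogous modification on the SE side, so that $|V_{\rm left SW}(G)|$ and $|V_{\rm right SE}(G)|$ change in lockstep. But the stepped surface $\k$ is arbitrary (subject only to admissibility, $k_0\geq\k(i_0,j_0)$ and $\k\geq\fund$); it has no reflection symmetry in general, and a single downward mutation at an open face deep in the SW quadrant leaves the SE boundary of $G$ completely untouched. Under your accounting the two cardinalities would then drift apart. What actually saves the statement is that closing a SW open face typically does not create a new maximal ``chain'' of the left/right relation of Definition \ref{defn:leftright} --- it extends an existing one further left, replacing one left-most SW vertex by another --- and verifying this is exactly the content of the paper's argument, which dispenses with induction entirely: it observes that each maximal sequence $v_0,\dots,v_n$ with $v_i$ on the left of $v_{i+1}$ has $v_0\in V_{\rm left SW}(G)$ if and only if $v_n\in V_{\rm right SE}(G)$, giving a direct bijection between the two sets. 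Your symmetry shortcut does not substitute for that.

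Two further, smaller issues. First, in the closed-face step you assert that urban renewal plus degree-2 collapse ``leaves the outer boundary of $G$ unchanged''; since the collapse can merge vertices adjacent to the renewed face with their neighbours, this needs justification when the mutated face abuts the boundary. Second, for (1) and (2) your inductive scheme ultimately defers to the same catalogue of local boundary configurations that the paper handles directly and non-inductively: it fixes an open face $(i,j)\in\partial F$ in the SW, lists the eight possible local pictures (Figure \ref{fig_SW}) read off from Table \ref{tab:quiver&k}, and observes that every left-most SW vertex is black, with the other quadrants treated symmetrically. So the induction buys you nothing there while adding the bookkeeping you yourself flag as the main obstacle; I would recommend adopting the direct case analysis for (1)--(2) and the chain bijection for (3)--(4).
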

		\begin{proof}
			To show (1), we use the same analysis as in the proof of Proposition 5 in \cite{Speyer}.
			It is clear that a left-most vertex must be on the boundary of $G.$
			We then consider an open face $(i,j)\in \partial F$ on the South-West of $(i_0,j_0)$ with height $k\in\mathbb{Z}.$
			All the eight possibilities are shown in Figure \ref{fig_SW}, where the face of height $k$ in the circle is $(i,j)$.
			We see that all the left-most South-West vertices must be black.
			A similar argument can be used to show the other case of (1) and (2).
			In order to show (3), we consider a maximal sequence $v_0,v_1,\dots,v_n$ of vertices of $G$ such that $v_i$ is on the left of $v_{i+1}$ for all $i$.
			We have $v_0\in V_{\rm left SW}(G)$ if and only if $v_n\in V_{\rm right SE}(G)$.
			This gives a bijection between $V_{\rm left SW}(G)$ and $V_{\rm right SE}(G)$.
			So we proved (3).
			The similar argument also shows (4).
		\end{proof}

		\begin{figure}
			\begin{center}
				\includegraphics[scale=0.7]{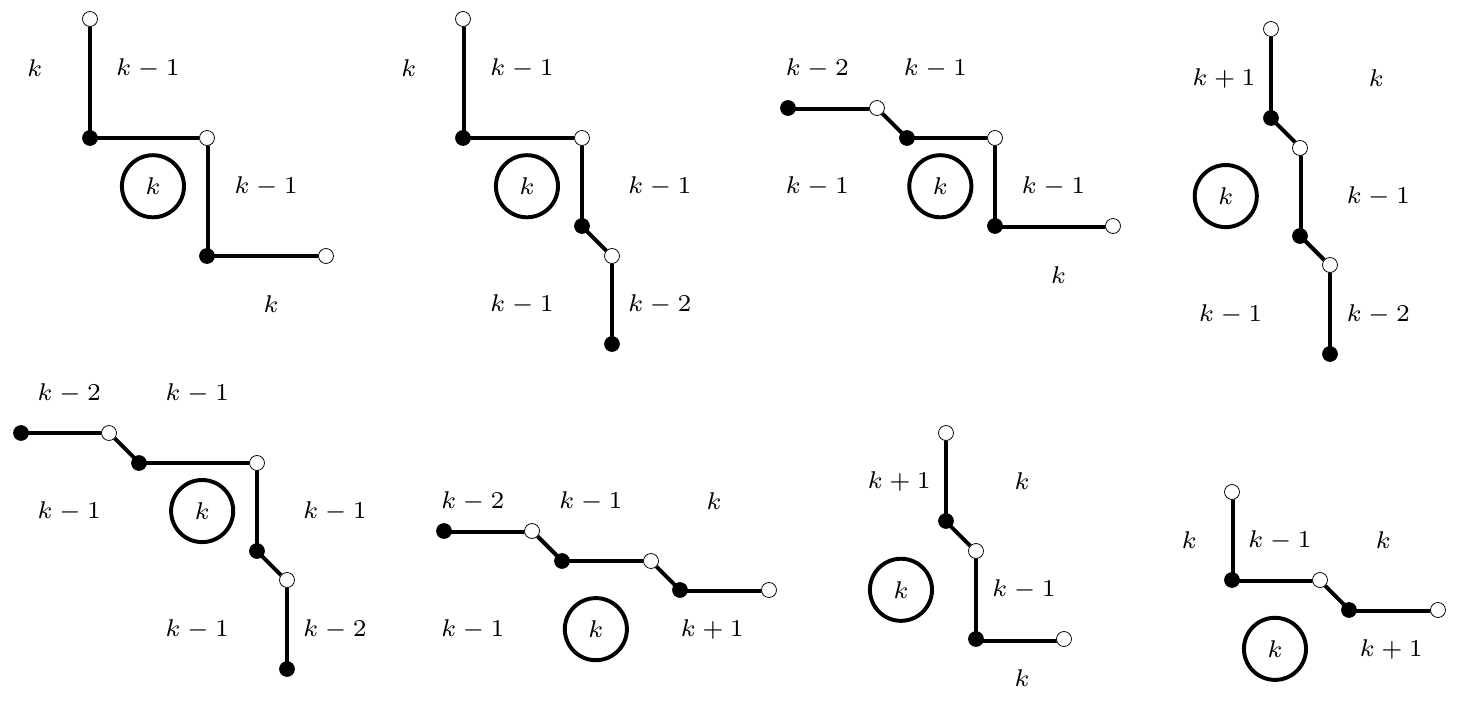}
			\end{center}
			\caption{
				All the eight possibilities of the boundary near an open face in the South-West of the center.
				The face is of height $k$ and is marked by a circle.
			}
			\label{fig_SW}
		\end{figure}
		
	\subsection{Non-intersecting paths and perfect matchings}

		We will give an explicit bijection between the perfect matchings of $G$ and the non-intersecting paths from $V_{\rm left SW}(G)$ to $V_{\rm right SE}(G)$ in $G$ (Proposition \ref{prop:bijection}).
		This bijection can be extended to $\overline{G}$ in Proposition \ref{prop:bijectionbar}.
		
		In order to define a path in $G$ and $\G$, we need an orientation of the graphs.
		
		\begin{defn}[Edge orientation of $G$ and $\overline{G}$]\label{defn:edgeoreient}
			Let the orientation of the edges of $G$ and $\overline{G}$ be such that it goes from left to right for diagonal and horizontal edges and from a black vertex to a white vertex for the vertical as follows.
			\medskip
			\begin{center}
				\includegraphics[scale=0.8]{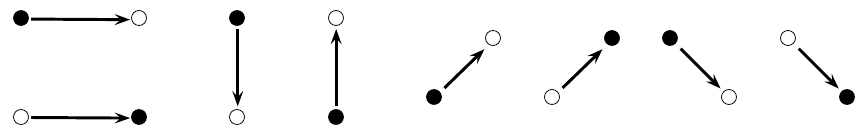}
			\end{center}
		\end{defn}
		
		\begin{defn}\label{defn:sourcesink}
			Let $G=G_{p,\k}$ and $\G$ be as in Definition \ref{defn:Gclosure}. With the notations from Definition \ref{defn:direction}, we define
			\begin{align*}
				\begin{aligned}
					M_0 &:= \{\text{white-black horizontal and diagonal edges of }G\},\\
					\overline{M}_0 &:= \{\text{white-black horizontal and diagonal edges of }\overline{G}\},\\
					\mathcal{M} &:= \{\text{perfect matchings of }G\},\\
					\overline{\mathcal{M}} &:= \{\overline{M}= M\cup\diag(E(\overline{G})\setminus E(G)) \mid M\in\mathcal{M}\},\\
					\mathcal{P} &:= \{\text{non-intersecting paths from }V_{\rm left SW}(G)\text{ to }V_{\rm right SE}(G)\text{ on }G\},\\
					\overline{\mathcal{P}} &:= \{\text{non-intersecting paths from }V_{\rm left SW}(\overline{G})\text{ to }V_{\rm right SE}(\overline{G})\text{ on }\overline{G}\}.
				\end{aligned}
			\end{align*}
		\end{defn}
		
		The map $\Phi$ (resp. $\overline{\Phi}$) in the following definition is indeed the bijection between $\mathcal{M}$ and $\mathcal{P}$ (resp. between $\overline{\mathcal{M}}$ and $\overline{\mathcal{P}}$).
		They will be key ingredients to construct our nonintersecting-path solution.
			
		\begin{defn}\label{defn:barphi}
			We define a map $\Phi:\mathcal{M}\rightarrow\mathcal{P}$ by
			\begin{align*}
				\Phi(M) :=M\triangle M_0,\quad\text{for }M\in\mathcal{M}.
			\end{align*}
			This map can be extended to a map $\overline{\Phi}:\overline{\mathcal{M}}\rightarrow\overline{\mathcal{P}}$ defined by
			\begin{align*} 
				\overline{\Phi}(\overline{M}) := \overline{M}\triangle\overline{M}_0,\quad\text{for }\overline{M}\in\overline{\mathcal{M}},
			\end{align*}
			where $A\triangle B := A\cup B \setminus A\cap B = (A\setminus B)\sqcup(B\setminus A)$ is the symmetric difference of $A$ and $B$.
			The notation $\sqcup$ represents the disjoint union.
		\end{defn}

		\begin{remark} It is worth mentioning that if we consider $M\in\mathcal{M}$ and $P\in\mathcal{P}$ as sets of dimers on edges of $G,$ then the action of $\Phi$ can be interpreted as superposing with $M_0$ and counted the number of dimers on each edge modulo 2.
		Similarly, $\overline{\Phi}$ is the superposition with $\overline{M}_0$ with number of dimers being counted modulo 2.
		\end{remark}

		\begin{ex}
			This example shows an interpretation of the map $\Phi$ on $\mathcal{M}$ as the superposition with $M_0$ modulo 2.
			\begin{center}
				\includegraphics[scale=0.7]{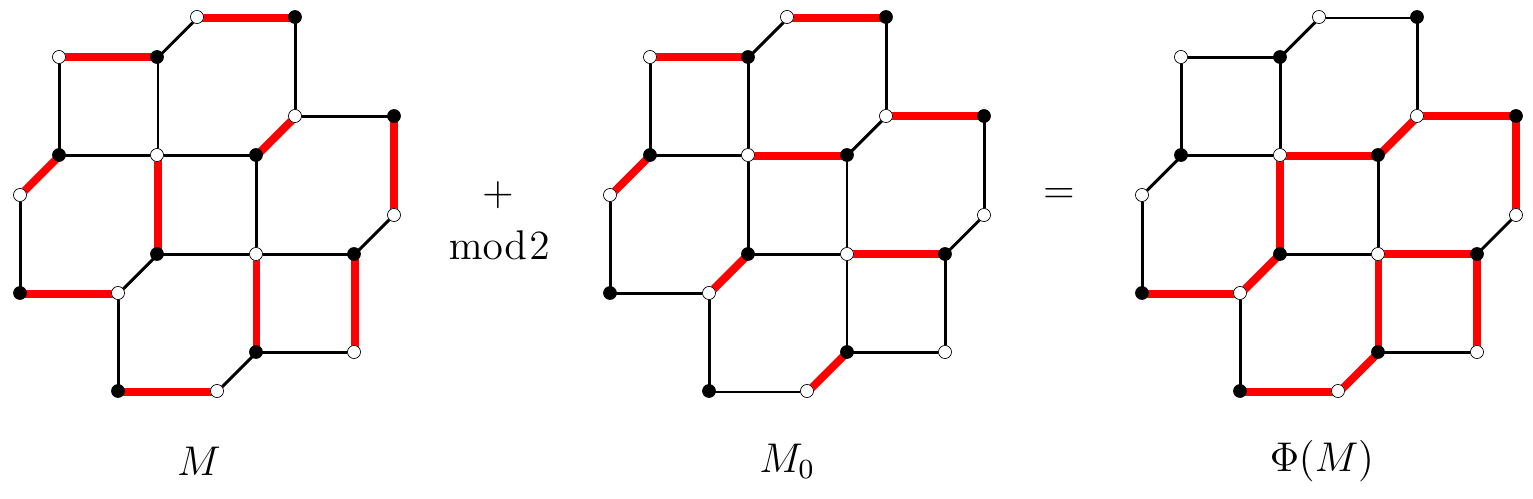}
			\end{center}
			We notice that the image of the map is indeed an element in $\mathcal{P}$.
		\end{ex}

		Using the following lemmas, Proposition \ref{prop:bijection} shows that $\Phi$ is a well-defined bijection from $\mathcal{M}$ to $\mathcal{P}$.

		\begin{lemma}\label{lem1}
			Let $M\in\mathcal{M}$. Then $\Phi(M)$ has the following properties.
			\begin{enumerate}
				\item For any vertex in $V_{\rm left SW}(G)\cup V_{\rm right SE}(G)$, exactly one of its incident edges is in $\Phi(M)$.
				\item For the other vertices, either none or two of their incident edges are in $\Phi(M).$
			\end{enumerate}
		\end{lemma}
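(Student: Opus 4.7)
The plan is to reduce the lemma to a local count at each vertex. For each $v \in V(G)$, let $\delta(v)$ denote the set of edges incident to $v$ and set $d_0(v) := |M_0 \cap \delta(v)|$. I would first establish the following intermediate claim:
\begin{equation*}
d_0(v) = \begin{cases} 0 & \text{if } v \in V_{\rm left SW}(G) \cup V_{\rm right SE}(G),\\ 1 & \text{otherwise.} \end{cases}
\end{equation*}

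Granting the claim, the lemma follows by a standard symmetric-difference computation. For any $M \in \mathcal{M}$ and any $v$,
\begin{equation*}
|\Phi(M) \cap \delta(v)| = |M \cap \delta(v)| + d_0(v) - 2|M \cap M_0 \cap \delta(v)|.
\end{equation*}
Since $M$ is a perfect matching, $|M \cap \delta(v)| = 1$, and $|M \cap M_0 \cap \delta(v)|$ is bounded above by both $1$ and $d_0(v)$. If $v \in V_{\rm left SW}(G) \cup V_{\rm right SE}(G)$ then $d_0(v) = 0$ forces $|M \cap M_0 \cap \delta(v)| = 0$, giving $|\Phi(M) \cap \delta(v)| = 1$; this yields (1). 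Otherwise $d_0(v) = 1$, and the expression evaluates to $0$ or $2$ according to whether the unique $M$-edge at $v$ does or does not lie in $M_0$; this yields (2).

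To prove the intermediate claim, I would enumerate local configurations. For an interior vertex $v$, Table \ref{tab:quiver&k} describes all possible shapes of the faces meeting at $v$ in terms of the heights of $\k$, and the bipartite coloring of $G$ is dictated by which surrounding faces of $\mathcal{Q}_\k$ are oriented counter-clockwise. Combining the six local pictures of the table with the definition of $M_0$ as the diagonal edges together with those horizontal edges having their white endpoint on the left, one checks case by case that exactly one such edge is incident to $v$. For boundary vertices, Figure \ref{fig_SW} together with its three rotational analogues at the NW, NE, and SE corners lists all local configurations; inspection shows that a left-most SW vertex (necessarily black by Proposition \ref{prop_g}) has no incident diagonal edge and no white-black horizontal edge arriving from the left, so $d_0(v) = 0$, and symmetrically for right-most SE vertices, while right-most NE and left-most NW vertices still satisfy $d_0(v) = 1$ because the $M_0$-edge reaches them from the interior.

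The main obstacle is this finite but tedious case analysis underlying the intermediate claim. A cleaner route that I would consider is to prove the analogous near-perfect statement first on the closure $\overline{G}$, where Proposition \ref{prop:modifiedopenface} gives a uniform description of the open-face completions, and then transport the result back to $G$ using the fact that $E(\overline{G}) \setminus E(G)$ consists entirely of diagonal edges (hence lies in $\overline{M}_0$), so the $d_0$-count changes only at the corner vertices and in a transparent way.
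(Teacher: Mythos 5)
Your proposal is correct and follows essentially the same route as the paper: the paper's proof likewise reduces to the local count that every vertex meets exactly one edge of $M$ and that the number of incident $M_0$-edges is $0$ precisely at the vertices of $V_{\rm left SW}(G)\cup V_{\rm right SE}(G)$ (black with nothing on the left, resp.\ white with nothing on the right) and $1$ elsewhere, which is exactly your intermediate claim on $d_0(v)$. The only cosmetic difference is that the paper verifies the $M_0$-count directly from the bipartite coloring and the left/right adjacency rather than by enumerating the configurations of Table \ref{tab:quiver&k}.
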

		\begin{proof}
			To show (1), we first consider a vertex in $V_{\rm left SW}(G)$.
			By the fact that $M$ is a perfect matching, exactly one of its incident edges is in $M$.
			By Proposition \ref{prop_g}, the vertex is black.
			Since it is the left-most, none of its incident edges is in $M_0$.
			This is because there cannot be a white vertex on its left.
			So, exactly one of its incident edges is in $\Phi(M)$.
			Similarly, exactly one incident edge of a vertex in $V_{\rm right SE}(G)$ is in $M$.
			There is none of its edges is in $M_0$ because the vertex is white and the right-most. Hence (1) holds.

			For (2), if a vertex is not in $V_{\rm left SW}(G)\cup V_{\rm right SE}(G)$, it must be either in $V_{\rm left NW}(G)\cup V_{\rm right NE}(G)$ or it has both left and right adjacent vertices.
			For a vertex in $V_{\rm left NW}(G)\cup V_{\rm right NE}(G)$, there is one of its incident edges in $M,$ and also one in $M_0.$
      They can be the same or different.
			So, either none or two of its incident edges are in $\Phi(M)$.
			For a vertex having left and right adjacent vertices, one of its incident edges must be in $M_0$.
			This is because the vertex is either black and is on the right of a white vertex, or white and on the left of a black vertex.
			Also its one of the incident edges must be in $M$ because $M$ is a perfect matching.
			So either none or two of its incident edges are in $\Phi(M)$.
			Hence (2) holds.
		\end{proof}

		\begin{lemma}\label{lem2}
			For a black vertex of $G$ having two incident edges in $\Phi(M),$ the two edges must be of the form A, B or C in Figure \ref{fig_pathatblack} where the horizontal edges in the figure represent horizontal or diagonal edges of $G$.
		\end{lemma}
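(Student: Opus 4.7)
My plan is a local analysis at the black vertex $v$, using the orientation from Definition \ref{defn:edgeoreient} to characterize $M_0$-edges and then counting via $\Phi(M) = M \triangle M_0$.

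Observe first that the $M_0$-edges incident to $v$ are precisely the \emph{incoming} edges at $v$: a white-black horizontal or diagonal edge incident to $v$ has $v$ (black) on the right and so is oriented into $v$, whereas black-white horizontal/diagonal edges have $v$ on the left (outgoing), and all vertical edges (oriented black-to-white) also point away from $v$. Letting $m$ denote the number of incoming edges at $v$ and using that $M$ selects exactly one edge at $v$, the number of $\Phi(M)$-edges at $v$ equals $m-1$ when the $M$-edge is incoming (hence in $M_0$) and $m+1$ when it is outgoing. For this count to equal $2$, we therefore need $m=3$ or $m=1$.

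The structural step is to show $m=1$ at every black vertex, ruling out $m=3$. A black vertex of $G$ is dual to a clockwise face of $\Q_\k$, which by Table \ref{tab:quiver&k} is either the quadrilateral of row 2 or one of the four clockwise triangles arising in rows 3--6. In the quadrilateral case, $v$ has four incident edges in $G$---two horizontal (one western, one eastern) and two vertical---and only the western horizontal is incoming. In each of the four triangular cases, direct inspection of the arrow/dual correspondence shows that the three dual edges at $v$ consist of exactly one incoming edge (horizontal or diagonal, entering from the left) together with two outgoing edges (one of which may itself be diagonal). Hence $m=1$ in every case.

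It follows that only the case $m+1=2$ is possible, so the $M$-edge at $v$ must be outgoing, and the two edges of $\Phi(M)$ at $v$ consist of the unique incoming horizontal or diagonal edge together with the outgoing $M$-edge. Depending on whether the outgoing edge is horizontal/diagonal pointing to the right, vertical going up, or vertical going down, we obtain exactly the three configurations A, B, and C of Figure \ref{fig_pathatblack} (under the figure's convention that a drawn horizontal stands for a horizontal or diagonal edge of $G$). The main effort lies in the third paragraph, where the geometric inspection must carefully track, for each of the four triangular black faces, which dual edges sit to the left of $v$ and which to the right; this is routine once the correspondence between quiver arrows and their duals is fixed.
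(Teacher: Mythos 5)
Your proof is correct and is essentially the paper's argument: the paper simply observes that since $M$ contributes exactly one edge at the vertex, at least one of the two $\Phi(M)$-edges must lie in $M_0$, which eliminates configurations (D), (E), (F) of Figure \ref{fig_pathatblack}. Your in-degree count is an equivalent packaging of this step, and your verification from Table \ref{tab:quiver&k} that every black vertex has exactly one incoming (hence exactly one $M_0$) edge makes explicit the structural fact that the paper leaves encoded in the figure.
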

		\begin{proof}
			It is easy to see that the six cases in Figure \ref{fig_pathatblack} are all configurations of two incident edges of a vertex of $G$.
			Since $M$ is a perfect matching, one of the two edges must come from $M_0$.
			Since the cases (D), (E) and (F) contain no edge from $M_0$, they cannot happen.
		\end{proof}
		
		\begin{lemma}\label{lem3}
			For a white vertex of $G$ having two incident edges in $\Phi(M),$ the two edges must be of the form A, B or C in Figure \ref{fig_pathatwhite} where the horizontal edges in the figure represent horizontal or diagonal edges of $G$.
		\end{lemma}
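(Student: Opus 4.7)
The plan is to mirror the proof of Lemma \ref{lem2}, swapping the roles of black and white vertices throughout. As in the previous lemma, the six configurations (A)--(F) displayed in Figure \ref{fig_pathatwhite} exhaust all ways a white vertex $v$ can be incident to exactly two edges of $G$, where the ``horizontal'' strokes in the figure stand for either true horizontal or diagonal edges (these behave identically with respect to the coloring / orientation conventions of Definition \ref{defn:edgeoreient}). The configurations should be the reflections of those in Figure \ref{fig_pathatblack}: the rightward/leftward positions of the $M_0$-compatible edges get swapped because at a white vertex an $M_0$-edge exits the vertex to the right, whereas at a black vertex an $M_0$-edge enters from the left.

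The key observation, which I carry over verbatim from the previous proof, is that at least one of the two incident edges of $v$ in $\Phi(M)=M\triangle M_0$ must lie in $M_0$. Since $M$ is a perfect matching, $v$ is incident to exactly one edge $e_v\in M$. If $e_v\in M_0$, then $e_v$ cancels in the symmetric difference and both of the two incident edges of $v$ in $\Phi(M)$ must lie in $M_0\setminus M$. If $e_v\notin M_0$, then $e_v\in\Phi(M)$ accounts for one of the two incident edges and the second must lie in $M_0\setminus M$. Either way at least one of the two edges is in $M_0$.

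Finally, I will observe that for a white vertex $v$, an incident $M_0$-edge must be a horizontal or diagonal edge leaving $v$ to the right: by Definition \ref{defn:sourcesink}, $M_0$ consists of white--black horizontal and diagonal edges, and by Definition \ref{defn:edgeoreient} these are oriented left-to-right, so $v$ is the (white) left endpoint. Inspecting Figure \ref{fig_pathatwhite}, one then checks that configurations (A), (B), (C) each contain such a rightward horizontal-or-diagonal edge, while (D), (E), (F) contain none. Configurations (D), (E), (F) are therefore incompatible with the requirement that at least one edge lies in $M_0$, and the lemma follows.

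There is no genuine obstacle beyond the verification of the figure: the argument is a direct color-reversal of Lemma \ref{lem2}, and the enumeration in Figure \ref{fig_pathatwhite} is finite and small. The only point where care is required is making sure that the replacement of true horizontal edges by diagonal edges does not introduce extra cases — but since the orientation convention treats both kinds of edges identically (left-to-right, white-to-black in $M_0$), the case analysis is unchanged.
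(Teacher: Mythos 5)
Your proposal is correct and matches the paper's approach: the paper proves Lemma \ref{lem3} by the phrase ``Similar to Lemma \ref{lem2},'' and the argument for Lemma \ref{lem2} is exactly the one you reproduce --- since $M$ is a perfect matching, at most one of the two incident edges in $\Phi(M)=M\triangle M_0$ can come from $M\setminus M_0$, so at least one must lie in $M_0$, which rules out configurations (D), (E), (F). Your color-reversed justification that an $M_0$-edge at a white vertex must leave it to the right is the right way to fill in the ``similar'' step.
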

		\begin{proof}
			Similar to Lemma \ref{lem2}.
		\end{proof}

		\begin{figure}
			\begin{center}
				\includegraphics[scale=0.8]{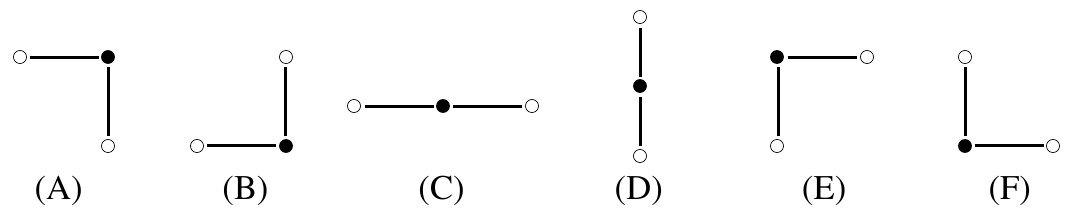}
			\end{center}
			\caption{
				All configurations of two incident edges of a black vertex of $G$.
				The horizontal edges in the picture represent horizontal/diagonal edges of $G$.
			}
			\label{fig_pathatblack}
		\end{figure}
	
		\begin{figure}
			\begin{center}
				\includegraphics[scale=0.8]{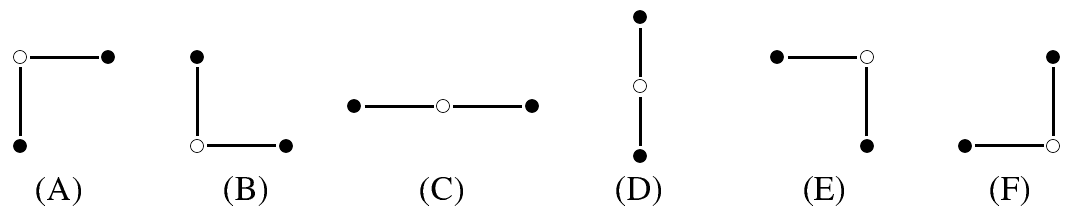}
			\end{center}
			\caption{
				All configurations of two incident edges of a white vertex of $G$.
				The horizontal edges in the picture represent horizontal/diagonal edges of $G$.
			}
			\label{fig_pathatwhite}
		\end{figure}

		\begin{prop}\label{prop:bijection}
			$\Phi:\mathcal{M}\rightarrow\mathcal{P}$ is a bijection.
		\end{prop}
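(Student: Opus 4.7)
The plan is to establish the three required properties of a bijection in turn, leveraging the three preceding lemmas. First, I would show that $\Phi$ is well-defined, i.e., $M\triangle M_0\in\mathcal{P}$ for every $M\in\mathcal{M}$. By Lemma \ref{lem1}, every vertex in $V_{\rm left SW}(G)\cup V_{\rm right SE}(G)$ is incident to exactly one edge of $\Phi(M)$, while every other vertex is incident to $0$ or $2$ edges of $\Phi(M)$. Combined with Lemmas \ref{lem2} and \ref{lem3}, which classify the admissible local configurations (A, B, C) at through-vertices of either color, the edge set $\Phi(M)$ is a disjoint union of simple paths whose endpoints are precisely the degree-$1$ vertices, i.e., the vertices in $V_{\rm left SW}(G)\cup V_{\rm right SE}(G)$. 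Inspecting Figures \ref{fig_pathatblack} and \ref{fig_pathatwhite} against the orientation rule of Definition \ref{defn:edgeoreient} shows that each configuration has exactly one incoming and one outgoing edge, so every path is a \emph{directed} path running from left to right. By Proposition \ref{prop_g}(3) the counts match, so the endpoints pair $V_{\rm left SW}(G)$ with $V_{\rm right SE}(G)$, and the paths are vertex-disjoint (hence non-intersecting) since each vertex belongs to at most one configuration.

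Second, injectivity is immediate from the involutive identity $(A\triangle M_0)\triangle M_0=A$: if $\Phi(M)=\Phi(M')$, then XOR-ing $M_0$ on both sides gives $M=M'$.

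Third, I would prove surjectivity by exhibiting a two-sided inverse $\Psi\colon\mathcal{P}\to\mathcal{M}$ defined by $\Psi(P):=P\triangle M_0$. Given $P\in\mathcal{P}$, I must verify that every vertex $v$ of $G$ is incident to exactly one edge of $\Psi(P)$. The check is local: if $a$ is the number of $P$-edges and $b$ the number of $M_0$-edges at $v$, one needs $a+b-2|P\cap M_0\cap \delta(v)|=1$. The possibilities for $(a,b)$ are constrained by the path structure of $P$: $a\in\{0,1,2\}$ is determined by whether $v$ is untouched, an endpoint, or a through-vertex of some path in $P$, and $b\in\{0,1\}$ is determined by whether $v$ lies on an outer boundary (left-NW/right-NE/left-SW/right-SE) or in the interior, using Proposition \ref{prop_g}(1)(2) and the definition of $M_0$. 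A short case split, read directly off Figures \ref{fig_pathatblack} and \ref{fig_pathatwhite}, shows $|\Psi(P)\cap\delta(v)|=1$ in every case, so $\Psi(P)\in\mathcal{M}$. The identities $\Phi\circ\Psi=\mathrm{id}$ and $\Psi\circ\Phi=\mathrm{id}$ then follow from involutivity of $\triangle M_0$.

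The main obstacle is the surjectivity step: one must be confident that the local configurations catalogued in Lemmas \ref{lem2} and \ref{lem3} exhaust the ways a non-intersecting path can traverse a vertex of $G$, and that the boundary cases (vertices in $V_{\rm left NW}(G)\cup V_{\rm right NE}(G)$, where an $M_0$-edge is present but the vertex is not an endpoint of any path) are handled correctly. Once this local analysis is complete, the bijection follows.
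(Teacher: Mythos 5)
Your proposal is correct and follows essentially the same route as the paper: well-definedness via Lemmas \ref{lem1}--\ref{lem3} together with the left-to-right orientation, and bijectivity via the explicit inverse $\Psi(P)=P\triangle M_0$ checked locally at each vertex. The only point to make explicit is that the decomposition of $\Phi(M)$ into paths (rather than paths plus closed cycles) is precisely what the orientation buys you --- a cycle would force a horizontal or diagonal edge oriented from right to left --- which the paper records as a separate step of the argument.
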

		\begin{proof}
			To show that $\Phi(M)$ is well-defined, we need to show that $\Phi(M)$ is a collection of non-intersecting paths from $V_{\rm left SW}(G)$ to $V_{\rm right SE}(G)$ in $G$, where $G$ is oriented as in Definition \ref{defn:edgeoreient}.
			From Lemma \ref{lem2} and Lemma \ref{lem3}, only the case (A), (B) or (C) can happen at any vertex.
			From this, all the vertices incident to two edges in $\Phi(M)$ are neither source nor sink.
			So $\Phi(M)$ is a set of non-intersecting paths in the oriented graph $G$, where a source or a sink is incident to exactly one edge in $\Phi(M)$.
			From Lemma \ref{lem1}, the sources and the sinks must in $V_{\rm left SW}(G) \cup V_{\rm right SE}(G)$.
			But (1) and (2) of Proposition \ref{prop_g} guarantees that $V_{\rm left SW}(G)$ must be the source and $V_{\rm right SE}(G)$ must be the sink.
			It remains to show that there is no loop in $\Phi(M)$.
			Since every vertex in a loop are neither source nor sink, there must be an horizontal/diagonal edge oriented from right to left.
			This contradicts the orientation of $G$.
			So $\Phi(M)$ contains no loop.

			To show that $\Phi$ is a bijection, we construct another map $\Psi:\mathcal{P}\rightarrow\mathcal{M}$ by letting $\Psi(P)$ be the superposition of the path $P$ with $M_0$ and counting dimers on each edge modulo 2.
			In other words $\Psi(P)=(P\cup M_0)\setminus (P\cap M_0).$
			It is obvious that $\Psi\circ\Phi = \text{id}_\mathcal{M}$ and $\Phi\circ\Psi = \text{id}_\mathcal{P}.$ So, $\Psi = \Phi^{-1}$ provided that $\Psi$ is well-defined.
			To show that $\Psi(P)$ is a perfect matching of $G,$ it suffices to show that any vertex has exactly one incident edge in $\Psi(P)$.

			We first consider a black vertex.
			\begin{itemize}
				\item
					If it is in $V_{\rm left SW}(G)$, it has one incident edge in $P$ because it is a source.
					Also, it has no edge in $M_0$ since it is the left-most.
					So, it still has one incident edge in $\Psi(P)$ after the superposition.
				\item
					If it is not in $V_{\rm left SW}(G)$, it has either none or two edges in $P.$
					In both cases, since the vertex is not the left-most, there must be a white vertex on its left.
					So it has an incident edge in $M_0.$
				\begin{itemize}
					\item If it has no edge in $P,$ it will receive an edge from $M_0$ after mapping by $\Psi$.
					\item
						If it has two edges in $P,$ it is either of the case A, B or C in Figure \ref{fig_pathatblack}.
						So exactly one of the edges gets removed after superposing with $M_0.$
				\end{itemize}
				From both cases, the vertex is incident to exactly one edge in $\Psi(P)$.
			\end{itemize}
			A similar argument holds for a white vertex.
			So we conclude that $\Psi(P)\in \mathcal{M}$.
		\end{proof}
		
		\begin{prop}\label{prop:iff} 
			Let $e\in M_0.$
			Then $e\in M$ if and only $e \notin \Phi(M)$
		\end{prop}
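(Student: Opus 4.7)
The plan is to unwind the definition of $\Phi$ and observe that the claim is an immediate property of the symmetric difference. By Definition \ref{defn:barphi}, we have
\[
  \Phi(M) = M \triangle M_0 = (M \setminus M_0) \sqcup (M_0 \setminus M),
\]
so for any edge $e$, membership in $\Phi(M)$ is equivalent to $e$ lying in exactly one of $M$ and $M_0$.

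Specialising to the hypothesis $e \in M_0$: the first component $M \setminus M_0$ cannot contain $e$, hence $e \in \Phi(M)$ if and only if $e \in M_0 \setminus M$, i.e.\ if and only if $e \notin M$. Contrapositively, $e \notin \Phi(M)$ if and only if $e \in M \cap M_0$, which (since $e \in M_0$ already) is the same as $e \in M$. This is exactly the biconditional asserted.

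There is essentially no obstacle here; the proposition is a direct set-theoretic consequence of the definition of $\Phi$ in Definition \ref{defn:barphi}, and does not require any of the structural lemmas (Lemma \ref{lem1}--\ref{lem3}) developed for Proposition \ref{prop:bijection}. Its role in the paper is presumably to bridge between matching-language and path-language: an edge of $M_0$ records whether the associated path uses that edge, via the complementary-membership statement just established. I would simply present the two-line argument above, perhaps preceded by the one-line reminder of the definition of $\triangle$.
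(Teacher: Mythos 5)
Your argument is correct and is essentially identical to the paper's own proof: both simply unwind $\Phi(M)=M\triangle M_0$ and observe that for $e\in M_0$, membership in the symmetric difference is equivalent to $e\notin M$. The paper phrases this via $(M\cup M_0)\setminus(M\cap M_0)$ rather than $(M\setminus M_0)\sqcup(M_0\setminus M)$, but the content is the same two-line set-theoretic check.
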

		\begin{proof}
			Let $e\in M_0.$ If $e\in M$, then $e\in M\cap M_0$.
			Hence $e\notin (M\cup M_0)\setminus(M\cap M_0)=\Phi(M)$.
			On the other hand, if $e\notin M$, then $e\notin M\cap M_0$.
			Hence $e\in (M\cup M_0)\setminus(M\cap M_0)=\Phi(M).$
		\end{proof}

		Now we have analogs to Proposition \ref{prop:bijection} and Proposition \ref{prop:iff} for $\overline{G}.$
		
		\begin{prop}\label{prop:bijectionbar}
			Let $\overline{\Phi}$ be defined as in Definition \ref{defn:barphi}.
			Then we have the following:
			\begin{enumerate}
				\item $\overline{\Phi}:\overline{\mathcal{M}}\rightarrow\overline{\mathcal{P}}$ is a bijection,
				\item For $e\in \overline{M}_0,$ $e\in \overline{M}$ if and only $e \notin \overline{\Phi}(\overline{M}).$
			\end{enumerate}
		\end{prop}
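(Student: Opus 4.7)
The proof closely parallels those of Proposition~\ref{prop:bijection} and Proposition~\ref{prop:iff}, with the triple $(G,M,M_0)$ replaced by $(\overline{G},\overline{M},\overline{M}_0)$. Part~(2) is purely formal: for $e\in\overline{M}_0$ one has $e\in\overline{M}$ iff $e\in\overline{M}\cap\overline{M}_0$ iff $e\notin\overline{M}\triangle\overline{M}_0=\overline{\Phi}(\overline{M})$, which is the same one-line argument as in Proposition~\ref{prop:iff}.

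For Part~(1), the plan is to split off the closure edges and reduce to Proposition~\ref{prop:bijection}. Let $D:=\diag(E(\overline{G})\setminus E(G))$ and let $H$ denote the white--black horizontal edges in $E(\overline{G})\setminus E(G)$, so that $\overline{M}_0=M_0\cup D\cup H$ disjointly, and $\overline{M}=M\cup D$ for the unique $M\in\mathcal{M}$. Since $E(G)$ and $E(\overline{G})\setminus E(G)$ are disjoint, the symmetric differences factor into the two pieces and I compute
\[
\overline{\Phi}(\overline{M})\;=\;(M\cup D)\,\triangle\,(M_0\cup D\cup H)\;=\;(M\triangle M_0)\,\sqcup\,\bigl(D\triangle(D\cup H)\bigr)\;=\;\Phi(M)\,\sqcup\,H.
\]
Thus $\overline{\Phi}$ simply appends the fixed set $H$ to $\Phi(M)$, and since $\overline{M}\mapsto M$ is a tautological bijection $\overline{\mathcal{M}}\to\mathcal{M}$, bijectivity of $\overline{\Phi}$ reduces, via Proposition~\ref{prop:bijection}, to showing that $P\mapsto P\sqcup H$ is a bijection $\mathcal{P}\to\overline{\mathcal{P}}$.

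To verify this in turn, I would redo the vertex-by-vertex analysis of Lemmas~\ref{lem1}--\ref{lem3} on $\overline{G}$. The local statements in Lemmas~\ref{lem2} and~\ref{lem3} only use that at each vertex one of any two incident edges in $\overline{M}$ must lie in $\overline{M}_0$; this property is inherited from $M_0$ together with the definition of $D$ and $H$, so those two lemmas transfer verbatim. The analog of Lemma~\ref{lem1} requires an analog of Proposition~\ref{prop_g} identifying $V_{\rm left SW}(\overline{G})$, $V_{\rm right SE}(\overline{G})$ and their NE/NW counterparts, together with their colors and the two cardinality matchings. The inverse map is then $\overline{\Psi}(\overline{P}):=\overline{P}\triangle\overline{M}_0$, which is involutive with $\overline{\Phi}$ by the self-inverse property of symmetric difference; well-definedness of $\overline{\Psi}$ into $\overline{\mathcal{M}}$ follows from the same vertex-degree bookkeeping as in the proof of Proposition~\ref{prop:bijection}.

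The main obstacle is the analog of Proposition~\ref{prop_g} for $\overline{G}$. I expect it to follow from a case analysis on the 16 boundary-face patterns of Proposition~\ref{prop:modifiedopenface} and Figure~\ref{fig_modifiedopenface}, paralleling the eight-case analysis on Figure~\ref{fig_SW} used in the proof of Proposition~\ref{prop_g}. Although the case list doubles, each case is handled by the same routine local inspection of colors and incident edges; once that step is carried out, the rest of the argument is formal and the proposition follows.
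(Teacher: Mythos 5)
Your part (2) and your overall strategy for part (1) --- peel off the closure edges and reduce to Proposition~\ref{prop:bijection} via a commuting square of symmetric differences --- are exactly the paper's. But there is a concrete error in your decomposition of $\overline{M}_0$. You claim $\overline{M}_0=M_0\cup D\cup H$ with $D=\diag(E(\overline{G})\setminus E(G))$ and $H$ the white--black horizontal closure edges. This is false: $\overline{M}_0$ consists only of the \emph{white--black} horizontal and \emph{white--black} diagonal edges of $\overline{G}$, whereas $D$ contains all closure diagonals, including the black--white ones (these occur on the South side of the boundary faces of Proposition~\ref{prop:modifiedopenface}; the paper makes this explicit when it observes that the forced path-segments $\overline{P}_0$ are ``the white-black horizontal and black-white diagonal edges in $E(\overline{G})\setminus E(G)$''). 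Redoing your computation with the correct $\overline{M}_0$ gives
\[
\overline{\Phi}(\overline{M})=(M\triangle M_0)\sqcup\bigl(D\triangle(\overline{M}_0\setminus M_0)\bigr)=\Phi(M)\sqcup\overline{P}_0,
\]
where $\overline{P}_0$ is the set of white--black horizontal \emph{and} black--white diagonal closure edges --- not your $H$. This matters: $\Phi(M)\sqcup H$ is not even an element of $\overline{\mathcal{P}}$, since the paths of $\Phi(M)$ terminate at $V_{\rm left SW}(G)$ and $V_{\rm right SE}(G)$ and can only be extended to the extremal vertices of $\overline{G}$ through the diagonal closure edges you have dropped; the horizontal edges of $H$ alone would sit as disconnected fragments.

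Once the appended set is corrected to $\overline{P}_0$, your reduction does go through, but your plan for proving that $P\mapsto P\sqcup\overline{P}_0$ is a bijection $\mathcal{P}\to\overline{\mathcal{P}}$ is heavier than necessary. You propose to redo Lemmas~\ref{lem1}--\ref{lem3} and establish an analogue of Proposition~\ref{prop_g} for $\overline{G}$ via a sixteen-case boundary analysis. The paper instead argues directly that for any $\overline{P}\in\overline{\mathcal{P}}$ the closure edges used are forced: a path leaving $V_{\rm left SW}(\overline{G})$ (resp.\ arriving at $V_{\rm right SE}(\overline{G})$) must travel rightward through horizontal or diagonal edges of $E(\overline{G})\setminus E(G)$, and there is a unique choice of these, so $\overline{P}=P\cup\overline{P}_0$ for a unique $P\in\mathcal{P}$. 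That one observation replaces your entire second vertex-by-vertex analysis. I recommend fixing the identification of $\overline{M}_0\setminus M_0$ and adopting the forcing argument for the path bijection.
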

		\begin{proof}
			We recall that the symmetric difference $\triangle$ is commutative and associative.
			Let $\overline{D} = \diag(E(\overline{G})\setminus E(G)).$
			Since $M\cap \overline{D}=\emptyset$ for $M\in\mathcal{M}$, $\mathcal{M}$ and $\overline{\mathcal{M}}$ are in bijection via the map:
			\[
				f:M\mapsto M\cup\overline{D} = M\triangle \overline{D},
			\]
			with the inverse map:
			\[
				f^{-1}:\overline{M}\mapsto \overline{M}\triangle \overline{D}.
			\]
			
			Consider $\overline{P}\in\overline{\mathcal{P}},$ we can see from Proposition \ref{prop:modifiedopenface} that the paths from $V_{\rm left SW}(\overline{G})$ must go to the right via horizontal or diagonal edges in $E(\G)\setminus E(G).$
			Similarly, the paths will arrive $V_{\rm right SE}(\overline{G})$ via horizontal or diagonal edges in $E(\G)\setminus E(G).$
			Since there is a unique choice of these edges, we have $ \overline{P} =  P\cup \overline{P}_0 $ where $\overline{P}_0\subset E(\overline{G})\setminus E(G)$ is the set of all horizontal and diagonal edges in the South-West, South or South-East.
			Note that $\overline{P}_0$ is also equal to the set of white-black horizontal and black-white diagonal edges in $E(\overline{G})\setminus E(G).$
			Since $P\cap \overline{P}_0=\emptyset$ for $P\in\mathcal{P}$, $\mathcal{P}$ and $\overline{\mathcal{P}}$ are in bijection via the map:
			\[
				g:P\mapsto P\cup \overline{P}_0 =P\triangle \overline{P}_0
			\]
			with the inverse map:
			\[
				g^{-1}:\overline{P}\mapsto \overline{P}\triangle \overline{P}_0.
			\]

			If we can show that $\overline{\Phi}=g\circ\Phi\circ f^{-1}$, (1) will automatically follows.
			To show this, we first show that that $\overline{M}_0 = \overline{D}\triangle M_0 \triangle \overline{P}_0$.
			Consider
			\[
				\overline{D}\triangle M_0 \triangle \overline{P}_0
				= \left( \overline{D} \triangle \overline{P}_0 \right) \triangle M_0 
				= (\overline{M}_0\setminus M_0) \triangle M_0 
				= (\overline{M}_0\setminus M_0) \cup M_0 
				= \overline{M}_0.
			\]
			From $\overline{M}_0 = \overline{D}\triangle M_0 \triangle \overline{P}_0$, we then have
			\[
				\overline{\Phi}(\overline{M}) = \overline{M} \triangle \overline{M}_0 =  \overline{M} \triangle (\overline{D}\triangle M_0 \triangle \overline{P}_0) = ((\overline{M}\triangle \overline{D}) \triangle M_0)\triangle \overline{P}_0 = (g\circ\Phi\circ f^{-1})(\overline{M})
			\]
			for all $\overline{M}\in\overline{\mathcal{M}}$.
			So, $\overline{\Phi}=g\circ\Phi\circ f^{-1}:\overline{\mathcal{M}}\rightarrow\overline{\mathcal{P}}$ is a bijection.

			For~(2), since $\overline{\Phi}:\overline{M}\mapsto\overline{M}\triangle \overline{M}_0,$ we have that for any $e\in\overline{M}_0,$
			\[
				e\in \overline{M}\iff e\in\overline{M}\cap\overline{M}_0\iff e \notin (\overline{M}\cup\overline{M}_0)\setminus(\overline{M}\cap\overline{M}_0)=\overline{\Phi}(\overline{M}).
			\]
			Hence we proved (2).
		\end{proof}

	\subsection{Modified edge-weight and nonintersecting-path solution}
	
		We recall the edge-weight $w_e$ in Definition \ref{defn:edgeweight}.
		It is compatible with the perfect-matching solution (Theorem \ref{thm:edgesol}).
		In order to construct a nonintersecting-path solution from the bijection $\overline{\Phi}$, the edge-weight $w_e$ requires some modification.
		Due to Proposition \ref{prop:bijectionbar}, we only need to inverse the weight of all edges in $\overline{M}_0$.

		\begin{defn}[Modified edge-weight $w_e'$]\label{defn:modifiededgeweight}
			For $x\in E(\overline{G})$ we define the \defemph{modified edge-weight} as follows:
			\[
				w_e'(x):=
				\begin{cases}
					w_e(x)^{-1}, & x\in\overline{M}_0,\\
					w_e(x), & \text{otherwise,}
				\end{cases}
			\]
			where $w_e$ is the edge-weight defined in Definition \ref{defn:edgeweight}.
		\end{defn}
		
		\begin{defn}[Modified edge-weight for paths in $\overline{G}$]
			For a path $p=x_1 x_2 \dots x_n$ in $\overline{G}$, its \defemph{modified edge-weight} is defined to be the following product
			\[
				w_e'(p) := \prod_{i=1}^n w_e'(x_i) .
			\]
			Then the weight for a non-intersecting path is defined by
			\[
				w_e'(\overline{P}) := \prod_{p\in\overline{P}} w_e'(p)\quad\text{for}\enspace\overline{P}\in\overline{\mathcal{P}}.
			\]
		\end{defn}
		
	Now we are ready for a nonintersecting-path solution for $\overline{G}$.
	Note that we can also consider the solution for $G$, but it turns out to be more complicated due to the present of open faces of $G$, which needs to be treated separately.
	
	\begin{thm}[Nonintersecting-path solution for $\overline{G}$]\label{thm_pathsol}
		Let $\k$ be an admissible initial data stepped surface with respect to $p=(i_0,j_0,k_0)$, $\overline{M}_0$ be defined as in Definition \ref{defn:sourcesink}.
		Then
		\begin{align*}
			T_{i_0,j_0,k_0} =\sum_{\overline{P}\in\mathcal{\overline{P}}} w_e'(\overline{P}) \Big/ \prod_{\circ\underset{b}{\text{---}}\bullet\in \overline{M}_0}\overline{p}_b,
		\end{align*}
		where $\overline{\mathcal{P}}$ is the set of all non-intersecting paths in $\G$ from $V_{\rm left SW}(\overline{G})$ to $V_{\rm right SE}(\overline{G})$.
	\end{thm}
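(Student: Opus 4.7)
The plan is to use the bijection $\overline{\Phi}:\overline{\mathcal{M}}\to\overline{\mathcal{P}}$ from Proposition \ref{prop:bijectionbar} to translate the perfect-matching sum in Theorem \ref{thm:edgesol} into a sum over non-intersecting paths on $\overline{G}$. The proof has two conceptually separate parts: a formal bijective identity at the level of edge weights, and a combinatorial reduction of the resulting prefactor.

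The first step is to prove the per-edge factorization
\[
w_e(\overline{M})\;=\;w_e(\overline{M}_0)\cdot w_e'(\overline{P})
\]
for any $\overline{M}\in\overline{\mathcal{M}}$ and its associated path $\overline{P}=\overline{\Phi}(\overline{M})=\overline{M}\triangle\overline{M}_0$. By Proposition \ref{prop:bijectionbar}(2), an edge $e\in\overline{M}_0$ lies in exactly one of $\overline{M},\overline{P}$, and by the definition of $\overline{\Phi}$, an edge $e\notin\overline{M}_0$ lies in $\overline{M}$ if and only if it lies in $\overline{P}$. Combined with the sign convention $w_e'(x)=w_e(x)^{-1}$ for $x\in\overline{M}_0$ and $w_e'(x)=w_e(x)$ otherwise (Definition \ref{defn:modifiededgeweight}), an edge-by-edge check shows that both sides of the displayed identity are built from exactly the same per-edge factors. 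Substituting into Theorem \ref{thm:edgesol} and reindexing the sum through $\overline{\Phi}$ then yields
\[
T_{i_0,j_0,k_0}\;=\;\frac{w_e(\overline{M}_0)}{w_e(\overline{M}_0)\big|_{c_{i,j}=1}}\;\sum_{\overline{P}\in\overline{\mathcal{P}}}w_e'(\overline{P}).
\]

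The remaining task, and the main obstacle, is to identify the $\overline{M}$-independent prefactor $w_e(\overline{M}_0)/w_e(\overline{M}_0)|_{c_{i,j}=1}$ — the purely-$c$ part of $w_e(\overline{M}_0)$ — with $\bigl(\prod_{\circ\underset{b}{\text{---}}\bullet\in\overline{M}_0}\overline{p}_b\bigr)^{-1}$. By Definition \ref{defn:edgeweight}, the $c$-variables enter $w_e$ only through the formal products $p_a$ and $\overline{p}_a$ attached to horizontal and diagonal edges. Since by definition $\overline{M}_0$ contains no black-white horizontal edges, its $c$-contributions are built only from its white-black horizontals and from its diagonals; using the explicit expressions for $p_a$ and $\overline{p}_a$ together with the boundary description of $\overline{G}$ in Proposition \ref{prop:modifiedopenface}, the formal infinite products attached to successive diagonal edges of $\overline{M}_0$ telescope pairwise in each column, leaving only the boundary contribution from the horizontal edges. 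Verifying this telescoping rigorously — tracking which $c_{i,j}$'s survive cancellation — will be the technical heart of the argument; it parallels the column-wise perfect-pairing analysis already used in Proposition \ref{prop:main}. Once this identification is in place, the theorem follows immediately from the displayed formula above.
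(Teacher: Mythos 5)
Your first step---the identity $w_e(\overline{M}) = w_e(\overline{M}_0)\, w_e'(\overline{\Phi}(\overline{M}))$, proved edge by edge using Proposition \ref{prop:bijectionbar}(2) and the sign convention of Definition \ref{defn:modifiededgeweight}, followed by reindexing the sum of Theorem \ref{thm:edgesol} through the bijection $\overline{\Phi}$---is exactly the paper's argument; the paper phrases the same computation via the decomposition $\overline{M}\triangle\overline{M}_0 = (\overline{M}\setminus\overline{M}_0)\sqcup(\overline{M}_0\setminus\overline{M})$ rather than edge by edge, but the content is identical. That part is correct and complete.

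The one place you stop short is the identification of the prefactor $w_e(\overline{M}_0)\big/ w_e(\overline{M}_0)\big|_{c_{i,j}=1}$ with $\bigl(\prod_{\circ\text{---}\bullet\in\overline{M}_0}\overline{p}_b\bigr)^{-1}$, which you defer to a column-wise telescoping of formal products attached to diagonal edges and flag as the ``technical heart.'' That telescoping is not needed, and the premise behind it is off: in Definition \ref{defn:edgeweight} the coefficients $c_{i,j}$ enter $w_e$ only through the factors $p_a$ and $\overline{p}_b$ carried by the two kinds of \emph{horizontal} edges; diagonal and vertical edges carry only $t$-monomials. This is exactly what is used in the proof of Theorem \ref{thm:edgesol}, where the products $\prod p_a$ and $\prod \overline{p}_b$ run over horizontal edges only and are unchanged when passing from $\overline{M}$ to $M$ precisely because $\overline{M}\setminus M$ consists of diagonal edges. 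Since $\overline{M}_0$ contains no black-white horizontal edges, the $c$-dependent part of $w_e(\overline{M}_0)$ is literally the product of the $\overline{p}_b$-contributions of its white-black horizontal edges, and the prefactor identification is immediate from the definition---which is how the paper treats it, in a single line. So your overall strategy is the right one and closes as soon as Definition \ref{defn:edgeweight} is read off correctly; there is no residual combinatorial cancellation to verify, and in particular no analogue of the perfect-pairing analysis of Proposition \ref{prop:main} is required at this stage.
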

	\begin{proof}
			Let $\overline{M}\in\overline{\mathcal{M}}$.
			From Definition \ref{defn:modifiededgeweight}, we have
			\[
				w_e'(\overline{M}\setminus\overline{M}_0)=w_e(\overline{M}\setminus\overline{M}_0) \text{ and } w_e'(\overline{M}_0\setminus\overline{M})=w_e(\overline{M}_0\setminus\overline{M})^{-1}.
			\]
			Since we can write the symmetric difference as the disjoint union $A\triangle B = (A\setminus B) \sqcup (B\setminus A)$, we have
			\begin{align*}
				w_e(\overline{M})
				&=w_e(\overline{M}\setminus\overline{M}_0)w_e(\overline{M}\cap\overline{M}_0) \\
				&=w_e(\overline{M}\setminus\overline{M}_0)w_e(\overline{M}_0\setminus\overline{M})^{-1}w_e(\overline{M}_0\setminus\overline{M})w_e(\overline{M}\cap\overline{M}_0) \\
				&=w_e'(\overline{M}\setminus\overline{M}_0)w_e'(\overline{M}_0\setminus\overline{M})w_e(\overline{M}_0) \\
				&=w_e'(\overline{M}\triangle \overline{M}_0)w_e(\overline{M}_0) \\
				&=w_e'(\overline{\Phi}(\overline{M}))w_e(\overline{M}_0).
			\end{align*}
			From Theorem \ref{thm:edgesol} and the previous equality, we have
			\begin{align*}
				T_{i_0,j_0,k_0}
				&= \sum_{\overline{M}\in\overline{\mathcal{M}}} w_e(\overline{M}) \Big/ w_e(\overline{M}_0)\big|_{c_{i,j}=1} \\
				&= \sum_{\overline{M}\in\overline{\mathcal{M}}} w_e'\left(\overline{\Phi}(\overline{M})\right) \frac{w_e(\overline{M}_0)}{ w_e(\overline{M}_0)\big|_{c_{i,j}=1}} \\
				&= \sum_{\overline{P}\in\overline{\mathcal{P}}} w_e'(\overline{P}) \Big/  \prod_{\circ\underset{b}{\text{---}}\bullet\in \overline{M}_0}\overline{p}_b.
			\end{align*}
			Hence we proved the theorem.
		\end{proof}
		
		The nonintersecting-path solution obtained in this section gives us a hint that it is possible to have a network solution analog to \cite{DFK13,DF14}.
		In the next section, we will transform the oriented graph $\G$ to a network.
		The modified edge-weight will be used on the network as well.
		This leads to a network solution and a network-matrix solution.


\section{Network solution}\label{sec_network}
	In this section, we will construct a weighted directed network $N$ associated with the oriented graph $\overline{G}$ and the modified edge-weight $w_e'$.
	We then decompose $N$ into network chips and their associated elementary network matrices (adjacency matrices).
	The product of all the elementary matrices associated with the network chips, according to an order of the chips, is then called the network matrix associated to $N$.
	The nonintersecting-path solution for $\G$ (Theorem \ref{thm_pathsol}) can then be interpreted as a path solution on this network, and can also be computed from a certain minor of the network matrix.
	We also show that our network and elementary network matrices coincide with the objects studied in \cite{DFK13} in the case of coefficient-free T-systems ($c_{i,j}=1$ for all $(i,j)\in\Z^2$).
	
	\subsection{Network associated with a graph}
		We construct the \defemph{directed network} $N$ associated with the oriented graph $\G$ by tilting all the diagonal edges so that they become horizontal, and tilting all the vertical edges so that the vertex on the left is black as shown in Figure \ref{fig_flatG}.
		Also, the network is directed from left to right.
		So a path in the oriented graph $\overline{G}$ (Definition \ref{defn:edgeoreient}) corresponds to a path in $N$.
		
		We then introduce the notion of ``row" for vertices of $N$.
		Two vertices are said to be in the same row if they are joined by a connected path of horizontal or diagonal edges.
		In other words, they are on the left and right of each other in $\G$, see Definition \ref{defn:leftright}.
		We number the rows so that they increase by one from bottom to top and the center face $(i_0,j_0)$ of $N$ lies between the row $-1$ and $0$. See Figure \ref{fig_flatG}.
		The precise definition is as the following.
		
		\begin{defn}\label{defn:row}
			The vertex $v\in V(\overline{G})$ is in \defemph{row} $r$ if two of its incident faces are $(i,j_0+r)$ and $(i,j_0+r+1)$ for some $i\in\Z$.
		\end{defn}
		
		\begin{figure}%
			\begin{center}\includegraphics[scale=0.5]{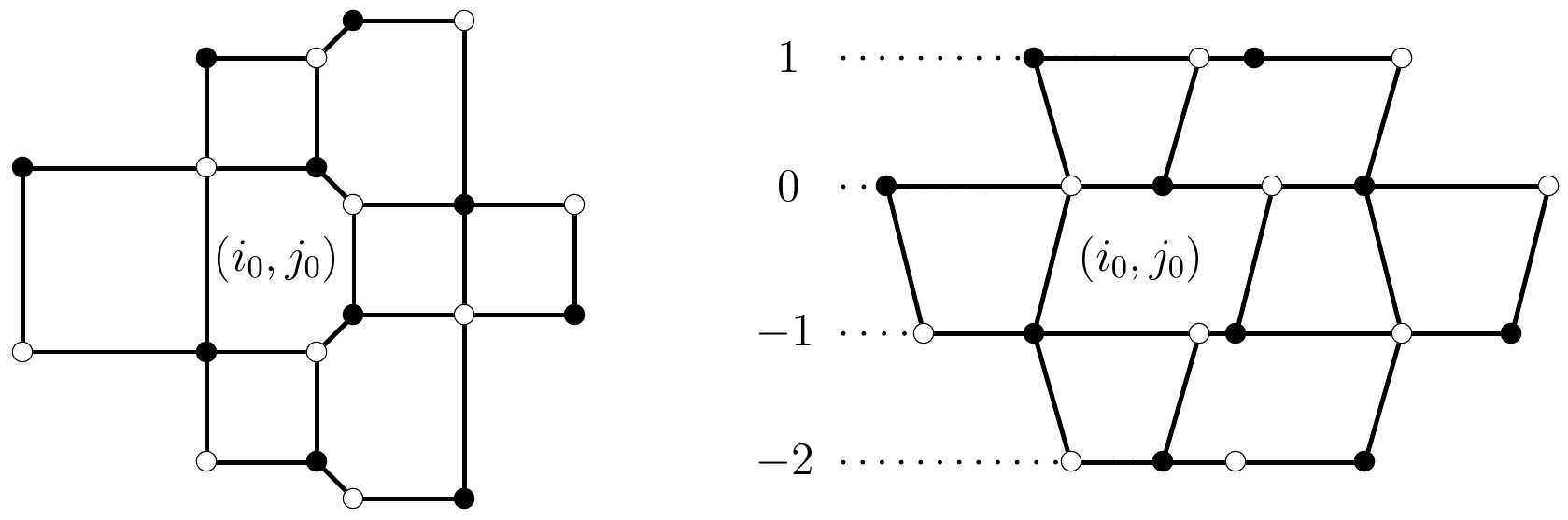}\end{center}
			\caption{An example of $\overline{G}$ and its associated network $N$.}
			\label{fig_flatG}%
		\end{figure}
		
		Let $N$ be the directed network associated with $\G$, where $r_{\rm min}$ and $r_{\rm max}$ are the smallest and the largest row numbers of $N$.
		We then put weight on the network locally around each black vertex as shown in Figure \ref{fig_tablenetwork}.
		The weight comes directly from the modified edge-weight $w_e'$ on $\G$ (Definition \ref{defn:modifiededgeweight}), so we will carry the notation $w_e'$ for the weight on $N$.
		
		\begin{remark}
			When a black vertex in Figure \ref{fig_tablenetwork} is on the boundary, we will assume the following.
			\begin{align*}
				t_a =1, \text{ when }a\notin F(\G),\\
				p_b =\overline{p}_b =1, \text{ when }b\notin F(G).
			\end{align*}
			This assumption comes directly from Definition \ref{defn:edgeweight}.
		\end{remark}
		
		\begin{table}%
			\begin{center}\begin{tabular}{ccc}
\hline
Part of $\G$ & Network chip & Network matrix \\
\hline
\raisebox{-.5\height}{\includegraphics[scale=1]{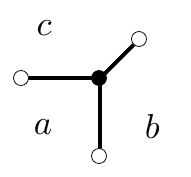}} & \raisebox{-.5\height}{\includegraphics[scale=1.1]{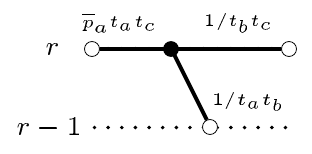}} & $U_r(t_a,t_b,t_c)$\\
\hline
\raisebox{-.5\height}{\includegraphics[scale=1]{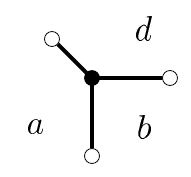}} & \raisebox{-.5\height}{\includegraphics[scale=1.1]{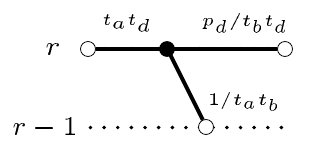}} & $U'_r(t_a,t_b,t_d)$\\
\hline
\raisebox{-.5\height}{\includegraphics[scale=1]{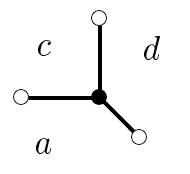}} & \raisebox{-.5\height}{\includegraphics[scale=1.1]{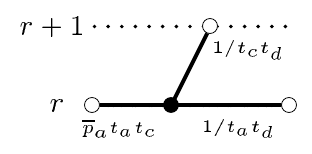}} & $V_r(t_a,t_c,t_d)$\\
\hline
\raisebox{-.5\height}{\includegraphics[scale=1]{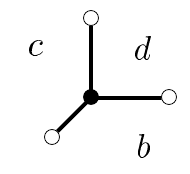}} & \raisebox{-.5\height}{\includegraphics[scale=1.1]{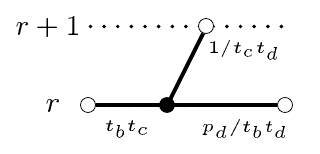}} & $V'_r(t_b,t_c,t_d)$\\
\hline
\raisebox{-.5\height}{\includegraphics[scale=1]{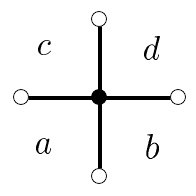}} & \raisebox{-.5\height}{\includegraphics[scale=1.1]{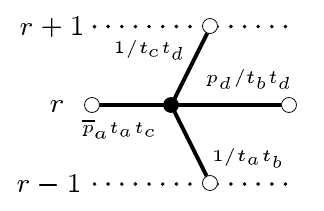}} & $W_r(t_a,t_b,t_c,t_d)$\\
\hline
\end{tabular}\end{center}
			\caption{
				A table comparing a part of $\G$ around a black vertex, its corresponding elementary network chip in $N$ and its corresponding elementary network matrix defined in Definition \ref{defn:elementarymatrix}
			}
			\label{fig_tablenetwork}%
		\end{table}
		
	\subsection{Nonintersecting-path solution for the network}
		We have already defined the weighted directed network $N$ associated with $\G$, where the weight on $N$ comes directly from the weight on $\G$.
		The nonintersecting-path solution for $\G$ (Theorem \ref{thm_pathsol}) can then be interpreted in terms of nonintersecting paths in $N$.
		The left-most SW vertices of $\G$ correspond to the left-most vertices in the rows $[r_{\rm min},-1]:=\{r_{\rm min},\dots,-2,-1 \}$ of $N$ while the right-most SE vertices of $\G$ corresponds to the right-most vertices in rows $[r_{\rm min},-1]$ of $N$.
		We then get the following theorem.
		
		\begin{thm}[Nonintersecting-path solution for $N$]\label{thm:network}
			Let $N$ be the weighted directed network associated with $\G$.
			Then
			\[
				T_{i_0,j_0,k_0} = Q^{-1} \sum_{P} w_e'(P),
			\]
			where the sum runs over all the non-intersecting paths on $N$ starting from the rows $[r_{\rm min},-1]$ on the left to $[r_{\rm min},-1]$ on the right and $Q :=\displaystyle\prod_{\substack{\circ\text{---}\bullet\\b}\in \overline{M}_0}\overline{p}_b$.
		\end{thm}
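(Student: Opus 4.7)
The plan is to show that Theorem \ref{thm:network} is essentially a restatement of Theorem \ref{thm_pathsol} after the change of planar realization that turns $\overline{G}$ into $N$. The tilting procedure that produces $N$ from $\overline{G}$ --- flattening every diagonal edge to horizontal, and re-orienting every vertical edge so its black endpoint lies on the left --- is at the combinatorial level a homeomorphism of planar directed graphs: it leaves the vertex set, the oriented edge set, and all incidence data unchanged, only the geometric embedding is modified. Consequently, oriented paths in $\overline{G}$ are in canonical, edge-by-edge bijection with directed paths in $N$, and the non-intersection property is preserved under this bijection.

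First I would match the source set $V_{\rm leftSW}(\overline{G})$ with the leftmost vertices of rows $r_{\rm min},\dots,-1$ in $N$, and symmetrically match $V_{\rm rightSE}(\overline{G})$ with the rightmost vertices in those rows. The ``S'' condition follows from Definition \ref{defn:row}, which places the center face $(i_0,j_0)$ strictly between rows $-1$ and $0$: a vertex belongs to some row $r\leq -1$ if and only if both of its incident faces have second coordinate $\leq j_0$, which is precisely the condition for being south of the center. The ``leftmost'' conditions agree on the two sides because, after tilting, two vertices are joined by a connected sequence of horizontal/diagonal edges in $\overline{G}$ (the relation of Definition \ref{defn:leftright}) precisely when they are joined by a sequence of horizontal edges in $N$, i.e.\ they lie in the same row of $N$ with one to the left of the other. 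Proposition \ref{prop_g} guarantees that the number of sources equals the number of sinks on both sides, so the path families are of the correct type.

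Second I would verify that the weights transport correctly. The modified edge weight $w_e'$ on $N$ is assigned locally around each black vertex via the network chips of Figure \ref{fig_tablenetwork}, which are obtained by reading off the $w_e'$ labels of the corresponding edges of $\overline{G}$ in Definition \ref{defn:modifiededgeweight}. Since the bijection is edge-by-edge, $w_e'(\overline{P})$ equals the weight of the corresponding path configuration on $N$ for every $\overline{P}\in\overline{\mathcal{P}}$.

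Finally, combining the bijection with the weight identity and plugging into Theorem \ref{thm_pathsol} yields
\[
  T_{i_0,j_0,k_0} = \sum_{\overline{P}\in\overline{\mathcal{P}}} w_e'(\overline{P}) \Big/ Q = Q^{-1} \sum_{P} w_e'(P),
\]
which is the claimed formula. The main obstacle is the case analysis in matching sources and sinks along the boundary of $\overline{G}$, where the shapes of the faces vary (square, hexagonal, octagonal, with the special boundary shapes in Figure \ref{fig_modifiedopenface}): one must confirm that after tilting, the vertices of $\overline{G}$ declared ``leftmost SW'' coincide exactly with the leftmost vertices in rows $r_{\rm min},\dots,-1$ of $N$, and symmetrically on the SE side. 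This is routine once the row convention is pinned down, but it is the one place where pure transport of structure must be checked against the boundary conventions.
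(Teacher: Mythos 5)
Your proposal is correct and follows the same route as the paper: the paper presents Theorem \ref{thm:network} as an immediate reinterpretation of Theorem \ref{thm_pathsol}, noting only that the tilting preserves all combinatorial and weight data and that $V_{\rm left SW}(\G)$ and $V_{\rm right SE}(\G)$ become the leftmost and rightmost vertices of rows $[r_{\rm min},-1]$ of $N$. You spell out this correspondence (and the boundary check it requires) in more detail than the paper does, but the argument is the same.
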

		
	\subsection{Network matrix}\label{subsec:networkmatrix}
		We will now decompose $N$ into \defemph{elementary network chips}, which are small pieces of $N$ around black vertices illustrated in Figure \ref{fig_tablenetwork}.
		This can be done by breaking the network at all of its white vertices.
		See Figure \ref{fig_networkdecomp} for an example.
		We notice that we will have the following choice when there are two incident vertical edges at a white vertex.
		\begin{center}
			\includegraphics[scale=0.8]{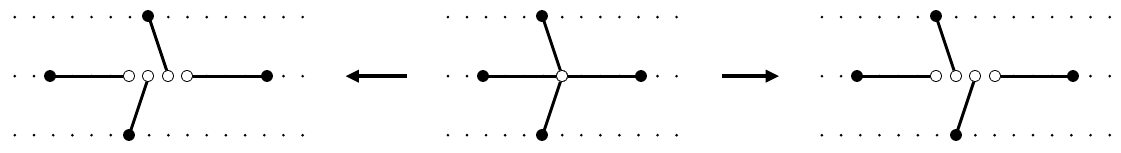}
		\end{center}
		Since the weight of a path on $N$ is independent of this choice, the nonintersecting-path solution is independent of this ambiguity.
		
		\begin{figure}%
			\begin{center}\includegraphics[scale=0.9]{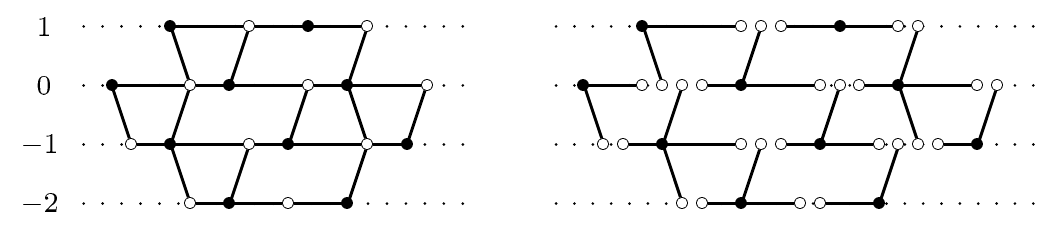}\end{center}
			\caption{A network is decomposed into elementary network chips}%
			\label{fig_networkdecomp}%
		\end{figure}
				
		A decomposition gives a partially-order on the network chips, we then pick a ``finer'' totally-order which does not contradict the partially-order.
		This totally-order can be thought as the order in which to pull chips out of the network from the left.
		
		\begin{ex}
			From the example in Figure \ref{fig_flatG}, we can pick a network decomposition as in Figure \ref{fig_networkdecomp}, and then pick a totally-orders as in Figure \ref{fig_sixorder}.
		\end{ex}
		
		\begin{figure}%
			\begin{center}\includegraphics[scale=0.7]{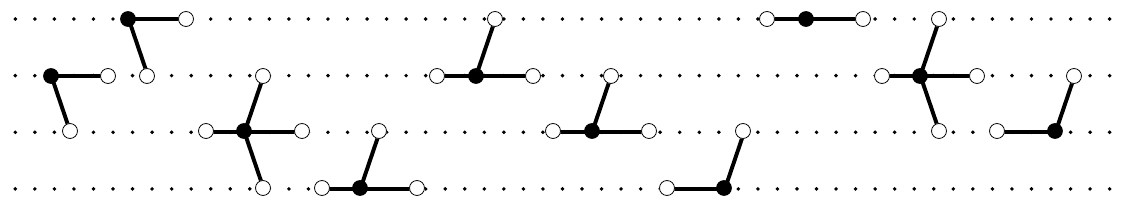}\end{center}
			\caption{A totally-order from a decomposition in Figure \ref{fig_networkdecomp}}%
			\label{fig_sixorder}%
		\end{figure}
		
		The next step is to associate each chip with an elementary network matrix shown in Figure \ref{fig_tablenetwork}.
    The matrices $W_r$, $V_r$, $V'_r$, $U_r$ and $U'_r$ are defined in the following definition.
    \begin{defn}\label{defn:elementarymatrix}
    We define an \defemph{elementary network matrix} associated with a network $N$, depending on its configuration around a black vertex in Figure \ref{fig_tablenetwork}, to be an square matrix of size $r_{\rm max}-r_{\rm min}+1$ with entries:
    \begin{align*}
      (U_r(x,y,z))_{\alpha,\beta}&:=
        \begin{cases}
          (U(x,y,z))_{\alpha-r+2,\beta-r+2}, &\text{if }\alpha,\beta\in\{r-1,r\},\\
          \delta_{\alpha,\beta}, &\text{otherwise,}
        \end{cases}\\
      (V_r(x,y,z))_{\alpha,\beta}&:=
        \begin{cases}
          (V(x,y,z))_{\alpha-r+1,\beta-r+1}, &\text{if }\alpha,\beta\in\{r,r+1\},\\
          \delta_{\alpha,\beta}, &\text{otherwise,}
        \end{cases}\\
      (W_r(w,x,y,z))_{\alpha,\beta}&:=
        \begin{cases}
          (W(w,x,y,z))_{\alpha-r+2,\beta-r+2}, &\text{if }\alpha,\beta\in\{r-1,r,r+1\},\\
          \delta_{\alpha,\beta}, &\text{otherwise,}
        \end{cases}
    \end{align*}
    and $U'_r$ (resp. $V'_r$) is defined in the same way as $U_r$ (resp. $V_r$) where
    \begin{align*}
      U(t_a,t_b,t_c)&:=
        \begin{pmatrix}
          1&0 \\ 
          \overline{p}_a\frac{t_c}{t_b} & \overline{p}_a \frac{t_a}{t_b}
        \end{pmatrix},\qquad
      U'(t_a,t_b,t_d):=
        \begin{pmatrix}
          1&0 \\ 
          \frac{t_d}{t_b} & p_d \frac{t_a}{t_b}
        \end{pmatrix},\\
      V(t_a,t_c,t_d)&:=
        \begin{pmatrix}
          \overline{p}_a\frac{t_c}{t_d} & \overline{p}_a \frac{t_a}{t_d}\\
          0&1
        \end{pmatrix},\qquad
      V'(t_b,t_c,t_d):=
        \begin{pmatrix}
          p_d\frac{t_c}{t_d} &  \frac{t_b}{t_d} \\ 
          0&1
        \end{pmatrix},\\
      W(t_a,t_b,t_c,t_d)&:=
        \begin{pmatrix}
          1&0&0\\ 
          \overline{p}_a \frac{t_c}{t_b} & \overline{p}_ap_d\frac{t_at_c}{t_bt_d} & \overline{p}_a\frac{t_a}{t_d} \\ 
          0&0&1
        \end{pmatrix}.
    \end{align*}
    The $(i,j)$ entry of a network matrix is just the weight of the edge from row $i$ to row $j$ in the network chip, see Figure \ref{fig_tablenetwork}.
    \end{defn}

		If two network chips do not have an order from the decomposition, then their corresponding elementary network matrices commute.
		For a totally-order of the network chips, we define the product of all the elementary network matrices with the same order of the chips.
		This product is independent of a choice of totally-order and decomposition.
		We call it the \defemph{network matrix} associated with $N$.
		
		\begin{remark}\label{rem:collapsingvscommuting}
			The network matrix $W_r$ defined in Definition \ref{defn:elementarymatrix} can be factored as follows:
			\[
				V_r(t_a,t_c,t_d)U'_r(t_a,t_b,t_d)=W_r(t_a,t_b,t_c,t_d)=U_r(t_a,t_b,t_c)V'_r(t_b,t_c,t_d).
			\]
			This corresponds to the following picture when collapsing a degree-2 vertex.
			\begin{center}
				\includegraphics[scale=0.8]{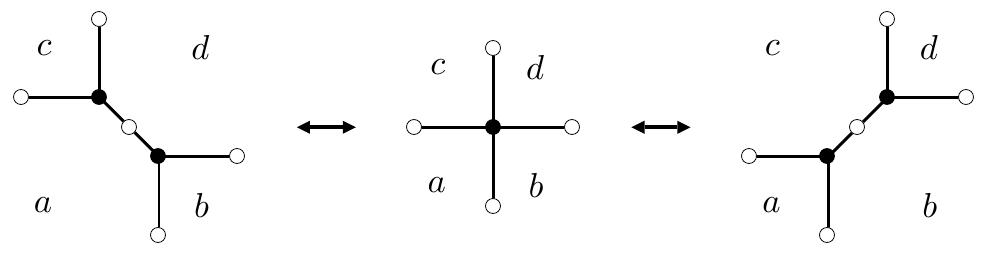}
			\end{center}
		\end{remark}
		
		\begin{remark}\label{rem:UVfactor}
			$U_r,U'_r,V_r$ and $V'_r$ defined in Definition \ref{defn:elementarymatrix} can also be factored.
			This factorization separates the coefficients $\overline{p}_a$'s and $p_d$'s from the variables $t_\alpha$'s.
			\begin{align*}
				U(t_a,t_b,t_c) &=
					\begin{pmatrix}
						1&0 \\
						0&\overline{p}_a
					\end{pmatrix}
					\begin{pmatrix}
						1&0 \\
						\frac{t_c}{t_b}&\frac{t_a}{t_b}
					\end{pmatrix}, \quad
				U'(t_a,t_b,t_d) =
					\begin{pmatrix}
						1&0 \\
						\frac{t_d}{t_b}&\frac{t_a}{t_b}
					\end{pmatrix}
					\begin{pmatrix}
						1&0 \\
						0&p_d
					\end{pmatrix},\\
				V(t_a,t_c,t_d) &=
					\begin{pmatrix}
						\overline{p}_a&0 \\
						0&1
					\end{pmatrix}
					\begin{pmatrix}
						\frac{t_c}{t_d}&\frac{t_a}{t_d} \\
						0&1
					\end{pmatrix},\quad
				V'(t_b,t_c,t_d) =
					\begin{pmatrix}
						\frac{t_c}{t_d}&\frac{t_b}{t_d} \\
						0&1 
					\end{pmatrix}
					\begin{pmatrix}
						p_d&0 \\
						0&1
					\end{pmatrix}.
			\end{align*}
		\end{remark}
		
		Since the entry $(i,j)$ of the network matrix is the partition function of weighted paths from row $i$ to row $j$, by the Lindstr\"{o}m-Gessel-Viennot theorem \cite{Lindstrom,GV}, the partition function of weighted non-intersecting paths from the rows $[r_{\rm min},-1]$ on the left to the rows $[r_{\rm min},-1]$ to the right is the principal minor of the network matrix corresponding to the rows/columns $[r_{\rm min},-1]$.
		Theorem \ref{thm:network} then gives the following.

		\begin{thm}[Network-matrix solution]\label{thm:networkmatrix}
			Let $M$ be the network matrix associated with the network $N$ of the graph $\G$. Then
			\[
				T_{i_0,j_0,k_0} =  Q^{-1} |M|_{r_{\rm min},\dots,-2,-1}^{r_{\rm min},\dots,-2,-1},
			\]
			where $|M|_{r_{\rm min},\dots,-2,-1}^{r_{\rm min},\dots,-2,-1}$ denotes the principal minor of $M$ corresponding to the rows $[r_{\rm min},-1]$ and columns $[r_{\rm min},-1]$, and $Q :=\displaystyle\prod_{\substack{\circ\text{---}\bullet\\b}\in \overline{M}_0}\overline{p}_b$.
		\end{thm}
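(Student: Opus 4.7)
The plan is to reduce the statement to the Lindström--Gessel--Viennot (LGV) lemma applied to the network $N$ together with Theorem~\ref{thm:network}. Starting from Theorem~\ref{thm:network}, it suffices to show
\[
  \sum_{P} w_e'(P) \;=\; |M|^{r_{\min},\dots,-1}_{r_{\min},\dots,-1},
\]
where the sum runs over families of non-intersecting paths from the sources in rows $[r_{\min},-1]$ on the left to the sinks in rows $[r_{\min},-1]$ on the right. So the key is to identify the $(i,j)$-entry of $M$ with the weighted partition function of paths in $N$ from the left-boundary vertex in row $i$ to the right-boundary vertex in row $j$, and then invoke LGV.

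First I would verify the basic entry interpretation for each of the five elementary network chips listed in Figure~\ref{fig_tablenetwork}. For each chip, the defining matrix $U_r$, $U'_r$, $V_r$, $V'_r$, $W_r$ is by construction the adjacency matrix that records, in entry $(\alpha,\beta)$, the modified edge-weight $w_e'$ of the directed edge going from row $\alpha$ on the left of the chip to row $\beta$ on the right (with the identity on rows not touched by the chip). This is just a direct check against Definition~\ref{defn:elementarymatrix} and Figure~\ref{fig_tablenetwork}. Next, when one concatenates two network chips $C_1$ followed by $C_2$ (meaning $C_1$ is to the left of $C_2$ in the totally-order), the weighted path partition function through $C_1 \sqcup C_2$ from row $i$ on the left to row $j$ on the right is obtained by summing over the intermediate row $k$ on the shared column; this is precisely matrix multiplication, so the product matrix $M_{C_2} M_{C_1}$ (or $M_{C_1} M_{C_2}$, depending on the convention chosen) has entry $(i,j)$ equal to the partition function through both chips. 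Iterating this over the whole totally-order gives the path-enumeration interpretation of $M$. The independence of $M$ from the choice of totally-order refining the decomposition follows because disjoint chips commute (they act on disjoint sets of rows), and the independence from the decomposition itself follows from the collapsing-vs-commuting identity noted in Remark~\ref{rem:collapsingvscommuting}.

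Once $M_{i,j}$ is identified as the partition function of weighted directed paths in $N$ from the left-boundary vertex of row $i$ to the right-boundary vertex of row $j$, I would apply the LGV lemma to the source tuple $(s_{r_{\min}},\dots,s_{-1})$ on the left and the sink tuple $(t_{r_{\min}},\dots,t_{-1})$ on the right (both ordered top to bottom by row index). The lemma gives
\[
  |M|^{r_{\min},\dots,-1}_{r_{\min},\dots,-1}
  \;=\; \sum_{\sigma}\sgn(\sigma) \sum_{P=(P_{r_{\min}},\dots,P_{-1})} \prod_\ell w_e'(P_\ell),
\]
where the inner sum runs over path systems with $P_\ell$ going from $s_\ell$ to $t_{\sigma(\ell)}$. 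Because the network $N$ is planar and both source and sink tuples are in matching order along the left and right boundaries, any system of paths with $\sigma \neq \mathrm{id}$ must contain an intersection, while non-intersecting systems must have $\sigma = \mathrm{id}$. Hence all signs collapse to $+1$ and only non-intersecting path systems survive, yielding $\sum_{P\in \overline{\mathcal{P}}_{\text{net}}} w_e'(P)$, which matches the expression in Theorem~\ref{thm:network}. Multiplying by $Q^{-1}$ then gives $T_{i_0,j_0,k_0}$.

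The main technical nuisance, rather than a deep obstacle, will be confirming the alignment of sources and sinks: one must check that the left-most SW vertices of $\overline{G}$ (i.e., the left endpoints of the paths in $\overline{\mathcal{P}}$ as used in Theorem~\ref{thm_pathsol}) correspond under the tilting procedure of Section~\ref{sec_network} exactly to the left-boundary vertices in rows $r_{\min},\dots,-1$ of $N$, and symmetrically for the right-most SE vertices and the right-boundary vertices in those same rows. This follows from Definition~\ref{defn:row} and the description of $V_{\rm left SW}(\overline{G})$ and $V_{\rm right SE}(\overline{G})$: after tilting, rows $[r_{\min},-1]$ are precisely the rows strictly below the center face $(i_0,j_0)$, which under the construction of $\overline{G}$ and the orientation in Definition~\ref{defn:edgeoreient} are exactly the boundary rows housing the SW sources and SE sinks. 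With this alignment verified, the LGV argument applies verbatim and the theorem follows.
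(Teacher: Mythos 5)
Your proposal is correct and follows essentially the same route as the paper: the paper likewise identifies the $(i,j)$ entry of the network matrix with the weighted path partition function between rows $i$ and $j$ (via the chip decomposition and matrix multiplication), invokes the Lindstr\"om--Gessel--Viennot lemma to express the non-intersecting path partition function as the principal minor on rows $[r_{\rm min},-1]$, and concludes from Theorem~\ref{thm:network}. Your write-up merely spells out in more detail the entry-by-entry check of the elementary matrices, the independence from the totally-order, and the source/sink alignment, all of which the paper leaves implicit.
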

		
		We may absorb the factor $Q^{-1}$ into the elementary network matrices, by defining the \defemph{modified elementary network matrices} to be as follows.
		\begin{align}
			\begin{aligned}
				\overline{U}_r(t_a,t_b,t_c) &:= (\overline{p}_a)^{-1} U_r(t_a,t_b,t_c),\quad &
				\overline{U}'_r(t_a,t_b,t_d) &:=  U'_r(t_a,t_b,t_d),\\
				\overline{V}_r(t_a,t_c,t_d) &:= (\overline{p}_a)^{-1} V_r(t_a,t_c,t_d),\quad &
				\overline{V}'_r(t_b,t_c,t_d) &:=  V'_r(t_b,t_c,t_d),\\
				\overline{W}_r(t_a,t_b,t_c,t_d) &:= (\overline{p}_a )^{-1}W_r(t_a,t_b,t_c,t_d).
			\end{aligned}
		\end{align}
		Then the \defemph{modified network matrix} associated with $N$ is defined as the product of all modified elementary network matrices according to a totally-order of the network chips as before.
		Theorem \ref{thm:networkmatrix} becomes the following.
		
		\begin{thm}[Modified-network-matrix solution]\label{thm:modifiednetworkmatrix}
			Let $\overline{M}$ be the modified network matrix associated with the network $N$ of the graph $\G$.
			Then
			\[
				T_{i_0,j_0,k_0} = |\overline{M}|_{r_{\rm min},\dots,-2,-1}^{r_{\rm min},\dots,-2,-1},
			\]
			where $|\overline{M}|_{r_{\rm min},\dots,-2,-1}^{r_{\rm min},\dots,-2,-1}$ is the minor of $\overline{M}$ corresponding to the rows $[r_{\rm min},-1]$ and columns $[r_{\rm min},-1]$.
		\end{thm}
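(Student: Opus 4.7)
Combining the definition of $\overline{M}$ with Theorem~\ref{thm:networkmatrix}, the claim reduces to showing
\[
  |\overline{M}|_{r_{\min},\ldots,-1}^{r_{\min},\ldots,-1} \;=\; Q^{-1}\,|M|_{r_{\min},\ldots,-1}^{r_{\min},\ldots,-1}.
\]
The plan is to exploit the factorizations in Remark~\ref{rem:UVfactor}: each elementary matrix $U_r,V_r,W_r$ can be written as a product of a diagonal ``coefficient'' matrix carrying the factor $\overline{p}_a$ at its row $r$ and a ``coefficient-free'' variable matrix. The modifications $\overline{U}_r,\overline{V}_r,\overline{W}_r$ are precisely the variable parts of these factorizations, while $\overline{U}'_r=U'_r$ and $\overline{V}'_r=V'_r$ are left unchanged (they carry $p_d$, not $\overline{p}_a$).

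The key combinatorial input is a bijection between chips of type $U$, $V$, or $W$ in the network $N$ and the white--black horizontal or diagonal edges of $\overline{M}_0$. Reading Figure~\ref{fig_tablenetwork}, each such chip has a unique white--black edge entering its central black vertex from the upper-left (labeled $\overline{p}_a$), and this edge is precisely one of the edges of $\overline{M}_0$; conversely every edge in $\overline{M}_0$ arises in this way for a unique chip (chips of type $U'$ or $V'$ correspond to $p_d$-labeled black--white edges and so do not occur). Summing over all chips, the product of the extracted diagonal factors equals $\prod_{\circ\underset{b}{\text{---}}\bullet \in \overline{M}_0}\overline{p}_b = Q$.

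With these two ingredients, the goal is to establish a matrix identity of the form $M = D\,\overline{M}$ where $D$ is a diagonal matrix satisfying $\prod_{r\in[r_{\min},-1]} D_{rr}=Q$; the desired identity on minors then follows from
\[
  |M|_{I}^{I} \;=\; \Bigl(\prod_{r \in I} D_{rr}\Bigr)\,|\overline{M}|_{I}^{I} \;=\; Q\,|\overline{M}|_{I}^{I}
\]
for $I=[r_{\min},-1]$.

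The main obstacle is producing this matrix identity cleanly: the diagonal coefficient matrices attached to different chips do not in general commute past all the intervening coefficient-free matrices, because two chips can share a row parameter $r$. My plan is to handle this by an induction on the number of chips, invoking Remark~\ref{rem:collapsingvscommuting} to reorganize neighboring chips and track how the diagonal factors accumulate on the correct rows. A cleaner alternative would be to bypass the matrix identity entirely and argue combinatorially via the Lindstr\"om--Gessel--Viennot lemma applied to $\overline{M}$: each non-intersecting path tuple counted by $|\overline{M}|_{I}^{I}$ corresponds, through the bijection of Section~\ref{sec_path}, to a perfect matching of $\overline{G}$ whose $\overline{M}_0$-edges always contribute a uniform factor, so that the modified edge-weight $w'_e$ downscales each configuration's contribution by exactly $Q^{-1}$, matching the right-hand side of Theorem~\ref{thm:networkmatrix}.
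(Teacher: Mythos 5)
The paper itself supplies no argument for this theorem: it is presented as an immediate reformulation of Theorem~\ref{thm:networkmatrix}, the factor $Q^{-1}$ being ``absorbed'' by fiat through the definition of the modified elementary matrices. Your proposal is an attempt to supply the missing justification, and it correctly isolates the two things that need to be checked (that the extracted coefficients multiply to $Q$, and that they can be pulled out of the minor as a single global factor). However, neither of your two routes closes the gap. For the matrix route, the identity $M = D\,\overline{M}$ with $D$ diagonal and $\prod_{r\in I}D_{rr}=Q$ cannot hold as stated: the diagonal factor $\overline{p}_a$ attached to a chip of type $U$, $V$ or $W$ sits at the row of that chip's black vertex, and these rows range over all of $[r_{\rm min},r_{\rm max}]$, not just $I=[r_{\rm min},-1]$, so even if all the interleaved diagonal factors could be commuted to one end (which requires conjugating the intervening matrices, not merely reordering), the product of the entries of $D$ over the rows in $I$ would not be $Q$. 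You flag the non-commutativity but defer it to an unspecified induction; that is where the actual work lies, and Remark~\ref{rem:collapsingvscommuting} (which only refactors a single $W_r$) does not resolve it.

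The LGV route has a more serious problem. You assert that each non-intersecting path family picks up the $\overline{M}_0$-contribution as ``a uniform factor,'' but Proposition~\ref{prop:bijectionbar}(2) says precisely that an edge of $\overline{M}_0$ is traversed by $\overline{\Phi}(\overline{M})$ if and only if it does \emph{not} lie in $\overline{M}$; the set of traversed $\overline{M}_0$-edges, and hence the product of $\overline{p}_a$-factors a family collects from the chips, varies with the configuration. The uniformity you invoke is a property of the ratio $w_e'(\overline{P})=w_e(\overline{M})/w_e(\overline{M}_0)$ established in the proof of Theorem~\ref{thm_pathsol} — i.e.\ it explains why $Q^{-1}$ appears in Theorem~\ref{thm:networkmatrix} in the first place — and it does not transfer to the statement that deleting the $\overline{p}_a$'s from the elementary matrices rescales every LGV term by exactly $Q^{-1}$. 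To make either route work you would need an additional argument (for instance, that every $U/V/W$-chip contributes its $\overline{p}_a$ to every non-intersecting family from $I$ to $I$, or a determinantal identity handling the interleaved diagonal factors), and as written the proposal does not supply one.
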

		
		\begin{remark}[Flatness condition]
			It is worth pointing out that the condition
			\begin{align}
				\overline{U}'_{k+1}(t_\ell,t_c,t_u)\overline{V}_{k}(t_d,t_c,t_r) = \overline{V}'_k(t_d,t_\ell,t_c')\overline{U}_{k+1}(t_c',t_r,t_u)
				\label{eq:flatcond}
			\end{align}
			is equivalent to
			\[
				t'_c t_{c} = J_{i,j,k} t_\ell t_r + t_u t_d
			\]
			where $c=(i,j)$, $\ell=(i-1,j)$, $r=(i+1,j)$, $u=(i,j+1)$, $d=(i,j-1)$ and $k=\k(c)-1=\k(\ell)=\k(r)=\k(u)=\k(d)$.
			It corresponds to the following action on networks.
			\begin{center}
				\includegraphics[scale=0.8]{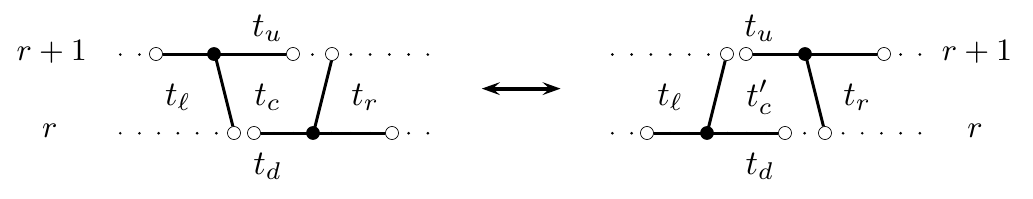}
			\end{center}
			In the graph $\G$, this is the urban renewal (Figure \ref{fig:urban}) at the face $c$ on the graph $\G$.
			\begin{center}
				\includegraphics[scale=0.8]{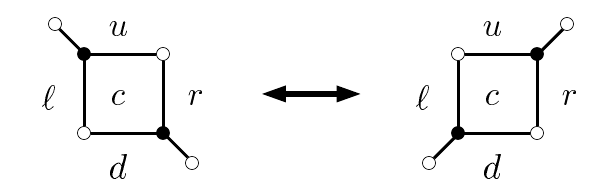}
			\end{center}
      On the stepped surface $\k$, this is a downward-mutation at $c$.
			Elementary network matrices therefore encode the octahedron recurrence \eqref{eq:tsyscoef} as the flatness condition of \eqref{eq:flatcond}.
		\end{remark}
		
		\subsection{Lozenge covering}
			In \cite{DFK13} the authors construct a lozenge covering from a stepped surface and  use it to construct a weighted network.
			Then the solution to the coefficient-free T-system is a partition function of non-intersecting paths in the network.
			
			We recall the procedure in \cite{DFK13} with some modification so that it fits in our setting.
			Starting from a stepped surface $\k$, a lozenge covering is constructed depending on the height of the four corners of a square: $(i,j)$, $(i+1,j)$, $(i,j+1)$ and $(i+1,j+1)$.
			The rule can be summarized in the following table.
			\begin{center}
				\includegraphics[scale=0.75]{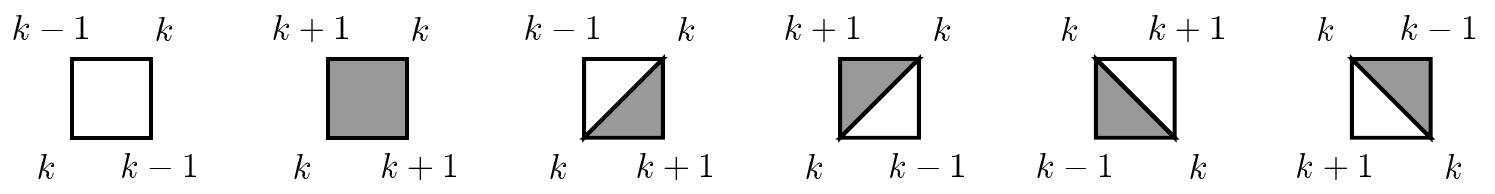}
			\end{center}
			We note that a choice of triangulating the squares in the first two cases makes a lozenge covering not unique.
			
			Next, we restrict the covering to the points $(i,j)$'s in $\mathring{F}\cup\partial F$.
			This gives a triangulation of a finite region.
			The next step is to group two triangles sharing a horizontal side together.
			As a result, we obtain ``elementary chips'' as follows.
			\begin{center}
				\includegraphics[scale=0.75]{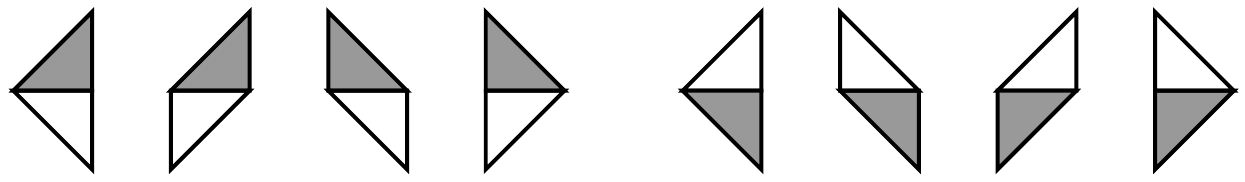}
			\end{center}
			They are classified into two types depending on whether the shaded triangle is on the top or the bottom of the lozenge.
			An ``elementary matrix'' is then associated with each type depending on the corners of the shaded triangle of a lozenge as follows.
			\begin{center}
				\includegraphics[scale=0.75]{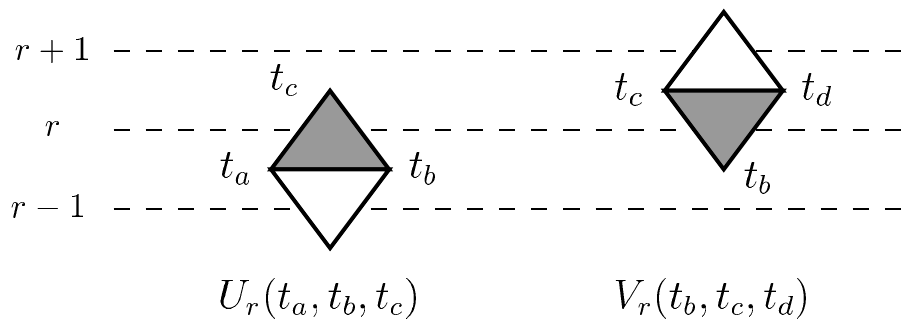}
			\end{center}
			We recall the notion of row defined in Definition \ref{defn:row}.
			The matrix $U_r(x,y,z)$ and $V_r(x,y,z)$ are square matrices of size $(r_{\rm max}-r_{\rm min}+1)\times (r_{\rm max}-r_{\rm min}+1)$ with entries
			\begin{align*}
				(U_r(x,y,z))_{\alpha,\beta}=
					\begin{cases}
						(U(x,y,z))_{\alpha-r+2,\beta-r+2}, &\text{if }\alpha,\beta\in\{r-1,r\},\\
						\delta_{\alpha,\beta}, &\text{otherwise},
					\end{cases}\\
        (V_r(x,y,z))_{\alpha,\beta}=
					\begin{cases}
						(V(x,y,z))_{\alpha-r+1,\beta-r+1}, &\text{if }\alpha,\beta\in\{r,r+1\},\\
						\delta_{\alpha,\beta}, &\text{otherwise},
					\end{cases}
			\end{align*}
			and similarly for $V_r(x,y,z)$.
			The $2\times 2$ matrices $U(x,y,z)$ and $V(x,y,z)$ are defined as
			\[
				U(x,y,z)=\begin{pmatrix}1&0\\\frac{z}{y}&\frac{x}{y}\end{pmatrix},\quad
				V(x,y,z)=\begin{pmatrix}\frac{y}{z}&\frac{x}{z}\\0&1\end{pmatrix}.
			\]
      We can immediately see that this is a specialization of the elementary network matrices in Definition \ref{defn:elementarymatrix} to the case when $c_{i,j}=1$ for all $(i,j)\in\Z^2$.
			With the same idea as the network-chip decomposition in Section \ref{subsec:networkmatrix}, the lozenge covering is decomposed.
			This gives an ordering of elementary network matrices.
			We then construct a ``network matrix'', which is a product of all elementary network matrices according to the order.
			The cluster variable $T_{i_0,j_0,k_0}$ is then expressed as a certain minor of the network matrix, similar to Theorem \ref{thm:networkmatrix} and Theorem \ref{thm:modifiednetworkmatrix}.
      
      \begin{remark}
        In the original definition of $U_r$ and $V_r$ in \cite{DFK13}, the index of $U$ is shifted by 1, i.e. $\mathcal{U}_r = U_{r+1}$ where $\mathcal{U}_r$ is the network matrix defined in \cite{DFK13}.
      \end{remark}
			
			In our construction, the existence of coefficients forces us to make a finer classification of the elementary network matrices as follows:
      \begin{center}
        \includegraphics[scale=0.75]{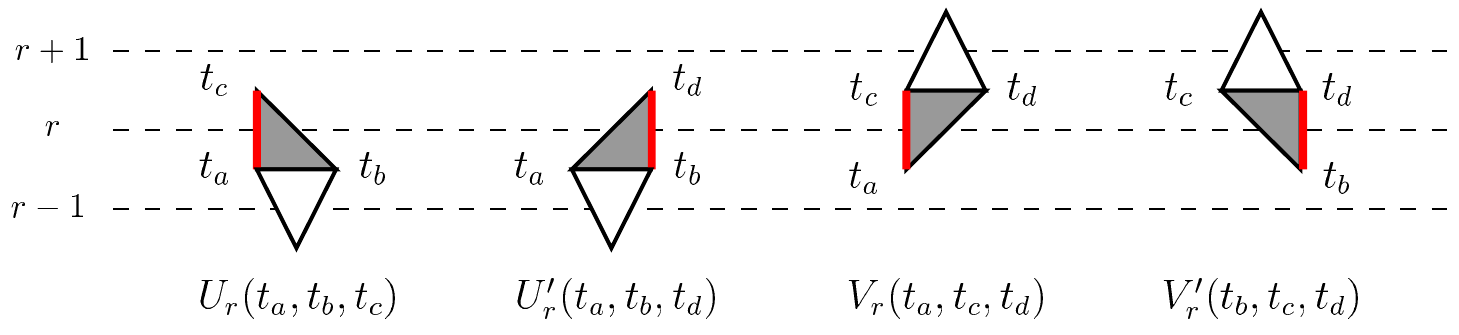}
      \end{center}
      From Remark \ref{rem:UVfactor}, we can think that the coefficients live on the vertical sides of shaded triangles. From Remark \ref{rem:collapsingvscommuting}, the matrix $W_r$ can be realized as a combination of two lozenges with the following choice of decomposition.
      \begin{center}
        \includegraphics[scale=0.75]{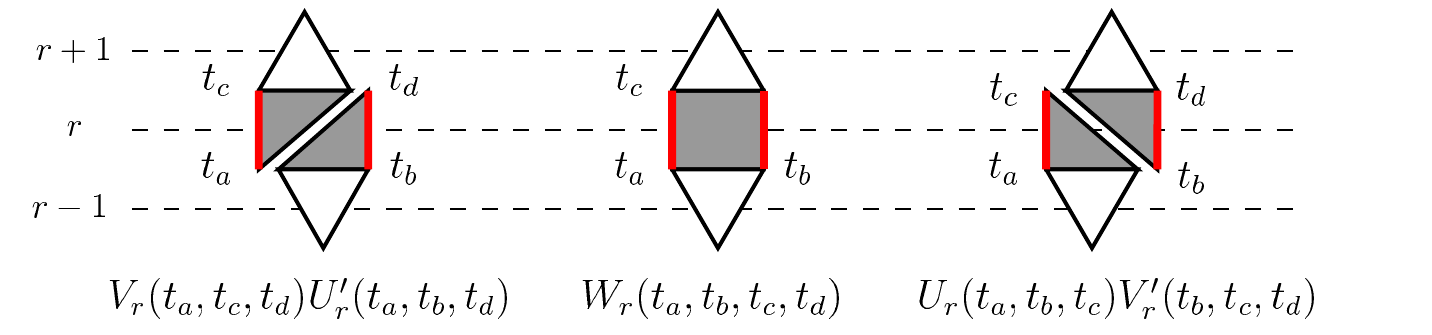}
      \end{center}
      
      The main reason that the lozenge covering has a rich connection to our story is because it is indeed a dual of our bipartite graph.
			This can be locally described as follows:
			\begin{center}
				\includegraphics[scale=0.75]{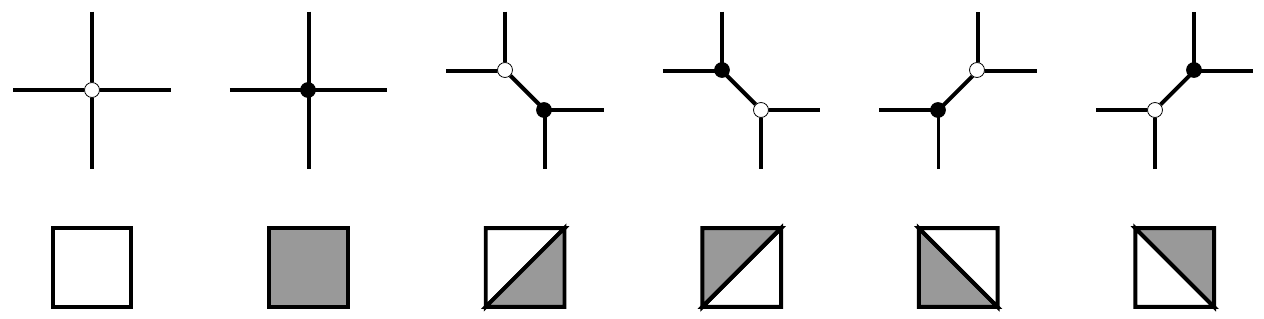}
			\end{center}
			A choice to triangulate a white square corresponds to a choice of decomposing a white vertex, and a choice to triangulate a black square corresponds to a choice of collapsing a degree-2 white vertex in Remark \ref{rem:collapsingvscommuting}.
			These can be illustrated by the following pictures:
			\begin{center}
				\includegraphics[scale=0.75]{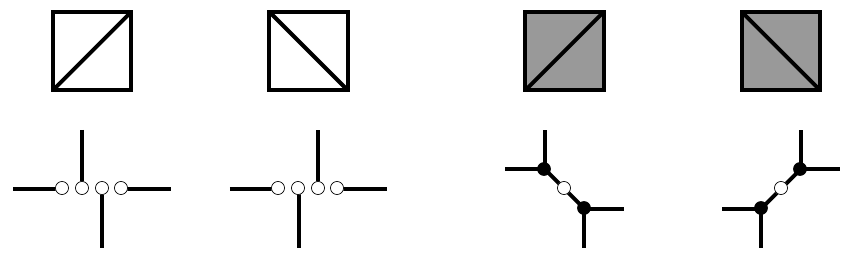}
			\end{center}
			
			We have provided various solutions to the T-system with principal coefficients.
			These solutions give combinatorial expressions of $T_{i_0,j_0,k_0}$ in terms of coefficients $c_{i,j}$'s and initial data $t_{i,j}$'s on $\k$ under the conditions:
			\[ k_0\geq \k(i_0,j_0) ,\quad\k(i,j)\geq \fund(i,j)\enspace\text{for }(i,j)\in\Z^2.\]
			We will discuss some other cases in Section \ref{sec_General}.


\section{Other coefficients}\label{sec_app}
In this section, we discuss a few examples of other choices of coefficients on T-systems: Speyer's octahedron recurrence \cite{Speyer}, generalized lambda-determinants \cite{DF13} and (higher) pentagram maps \cite{Glick,GSTV14}.
Almost all of them have their own explicit combinatorial solutions and are treated with different techniques.
Applying Theorem \ref{thm:sep} and Theorem \ref{thm:YasF} to our solutions, we get a partial solution to each of them when the initial data stepped surface is $\fund$.

In \cite{Speyer}, the author provides a perfect matching solution to the Speyer's octahedron recurrence, which is a partition function of perfect matchings of $G$.
In fact, our perfect matching solution is developed from the method used in the paper.
The author uses face-weight (same as Definition \ref{defn:faceweight}) which gives cluster variables and edge-weight (instead of our pairing-weight) which gives cluster coefficients.
So the main difference is that the edge-weight is specific to a choice of coefficients.

For the generalized lambda-determinant in \cite{DF13}, the author provides a network solution which is a partition function of non-intersecting paths in a weighted directed network.
This network is the same as the network discussed in Section \ref{sec_network}.
However, the weight used in \cite{DF13} is different to our weight due to the choice of coefficients.

The (higher) pentagram maps \cite{Schwartz,OST,Glick,GSTV14} can be realized as a Y-pattern, i.e. a dynamic on cluster coefficients, not cluster variables.
So it is not directly a T-system, but is instead a Y-pattern on the octahedron quiver.
In \cite{Glick}, the author gives a combinatorial solution to the pentagram map using alternating sign matrices.

\subsection{Speyer's octahedron recurrence}
	The \defemph{Speyer's octahedron recurrence}, defined with coefficients, is a recurrence relation \cite{Speyer} on the set of formal variables $\mathcal{T}^{(s)}=\{T^{(s)}_{i,j,k} \mid (i,j,k)\in\Zodd\}$ together with a set of extra variables, called coefficients, $\{A_{i,j},B_{i,j},C_{i,j},D_{i,j} \mid (i,j)\in\Z^2_{\rm even}\}$ satisfying
	\begin{align}
		T^{(s)}_{i,j,k-1}T^{(s)}_{i,j,k+1} =B_{i,j+k}D_{i,j-k} T^{(s)}_{i-1,j,k}T^{(s)}_{i+1,j,k} + A_{i+k,j}C_{i-k,j} T^{(s)}_{i,j-1,k}T^{(s)}_{i,j+1,k}
		\label{eq:speyer}
	\end{align}
	We note that in \cite{Speyer}, the condition on the index $(i,j,k)$ of $T^{(s)}_{i,j,k}$ is $i+j+k\equiv 0 \bmod 2$, and the coefficients are defined on $\Z^2_{\rm odd}$.
	With a shift in the indices, we make it coherent with our construction.
	
	Speyer's octahedron recurrence can also be interpreted \cite{Speyer} as a cluster algebra with coefficients.
	Its initial quiver is the octahedron quiver with initial cluster variables $T^{(s)}_{i,j,\fund(i,j)}$ similar to what we discussed in Section \ref{subsec:Tsys}.
	The only difference is the initial coefficients:
	\[
		y_{i,j}=
		\begin{cases}
			{B_{i,j}D_{i,j}}/{A_{i,j}C_{i,j}}, &i+j\equiv 0\bmod 2, \\
			{A_{i-1,j}C_{i+1,j}}/{B_{i,j-1}D_{i,j+1}}, &i+j\equiv 1\bmod 2,
		\end{cases}
	\]
	in the semifield $\P=\Trop(A_{i,j},B_{i,j},C_{i,j},D_{i,j} : (i,j)\in\Z^2_{\rm even})$.
	By Remark \ref{rem:GeometricFrozen}, it can be interpreted as a coefficient-free cluster algebra with frozen variables $\{A_{i,j},B_{i,j},C_{i,j},D_{i,j} \mid (i,j)\in\Z^2_{\rm even}\}$ with the quiver illustrated in Figure \ref{fig_Speyerquiver}.
	
	\begin{figure}
		\begin{center}\includegraphics[scale=0.8]{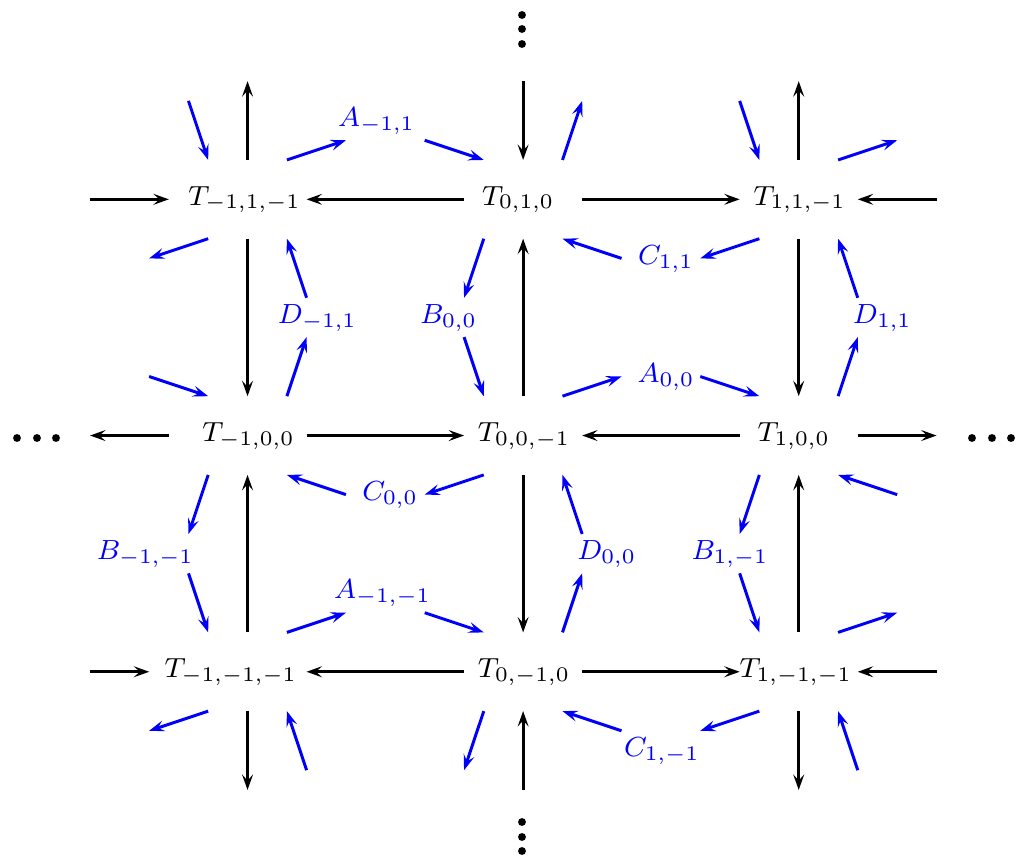}\end{center}
		\caption{
			A portion of the infinite quiver of Speyer's octahedron recurrence.
			The frozen variables and their incident arrows are in blue.
		}
		\label{fig_Speyerquiver}
	\end{figure}

	Since we will only consider the initial stepped surface $\fund$, we let $T^{(s)}_{i_0,j_0,k_0}$ denote its expression in terms of the initial data $t_{i,j}:=T^{(s)}_{i,j,\fund(i,j)}$ and the coefficients $A_{i,j},B_{i,j},C_{i,j},D_{i,j}$ for $(i,j)\in\Z^2$.
	For the T-system with principal coefficients, $T_{i_0,j_0,k_0}$ denotes its expression in terms of $t_{i,j}:=T_{i,j,\fund(i,j)}$ and $c_{i,j}$ for $(i,j)\in\Z^2$.
	In order to get $T^{(s)}_{i_0,j_0,k_0}$, we will have to specialize values of $t_{i,j}$ and $c_{i,j}$ in $T_{i_0,j_0,k_0}$ according to Theorem \ref{thm:sep}.
	Let $T_{i_0,j_0,k_0}(c_{i,j}=y_{i,j})$ denote the expression of $T_{i_0,j_0,k_0}$ where $t_{i,j}$ stayed untouched but $c_{i,j}$ is substituted by $y_{i,j}$.
	$T_{i_0,j_0,k_0}|_{\P}(t_{i,j}=1;c_{i,j}=y_{i,j})$ denotes the expression of $T_{i_0,j_0,k_0}$ where $t_{i,j}$ is set to $1$, $c_{i,j}$ is set to $y_{i,j}$, and then the whole expression is finally computed in $\P$.
	
	By the separation formula (Theorem \ref{thm:sep}), we get a solution to the Speyer's octahedron recurrence from the solution to the T-system with principal coefficients:
	\begin{align}
		T^{(s)}_{i_0,j_0,k_0} = \frac{T_{i_0,j_0,k_0}(c_{i,j}=y_{i,j})}{T_{i_0,j_0,k_0}|_{\P}(t_{i,j}=1;c_{i,j}=y_{i,j})}
		\label{eq:speyerfromprinc}
	\end{align}
	where $\P=\Trop(A_{i,j},B_{i,j},C_{i,j},D_{i,j} : (i,j)\in\Z^2_{\rm even})$ and
	\begin{align}
		y_{i,j}=
		\begin{cases}
			{B_{i,j}D_{i,j}}/{A_{i,j}C_{i,j}}, &i+j\equiv 0\bmod 2, \\
			{A_{i-1,j}C_{i+1,j}}/{B_{i,j-1}D_{i,j+1}}, &i+j\equiv 1\bmod 2.
		\end{cases}
		\label{eq:speyercoef}
	\end{align}

	We now compare our result to the solution in \cite{Speyer}.
	From our perfect matching solution (Theorem \ref{thm:main}), we then have
	\begin{align}
		T^{(s)}_{i_0,j_0,k_0} = \frac{\sum_{M\in\mathcal{M}} w_p(M)w_f(M)}{\bigoplus_{M\in\mathcal{M}} w_p(M)}
		\label{eq:mathchingsolforspeyer}
	\end{align}
	where $\mathcal{M}$ is the set of perfect matchings of the graph $G_{p,\fund}$.
	The denominator is a sum in $\P=\Trop(A_{i,j},B_{i,j},C_{i,j},D_{i,j} : (i,j)\in\Z^2_{\rm even})$, hence a monomial in $\{A_{i,j},B_{i,j},C_{i,j},D_{i,j}\mid (i,j)\in\Z^2_{\rm even}\}$.

	\begin{thm}[\cite{Speyer}]\label{thm:Speyer}
		For a point $p=(i_0,j_0,k_0)$ and an admissible initial stepped surface $\k$, we have
		\[
			T^{(s)}_{i_0,j_0,k_0} =  \sum_{M\in\mathcal{M}} w_s(M)w_f(M)
		\]
		where the sum runs over all the perfect matchings of $G=G_{p,\k}$. The weight $w_s$ is defined by $w_s(M):=\prod_{x\in M} w_s(x)$ and the weight $w_s(x)$ is defined for $x\in E(G)$ as follows:
		\begin{center}
			\includegraphics[scale=0.75]{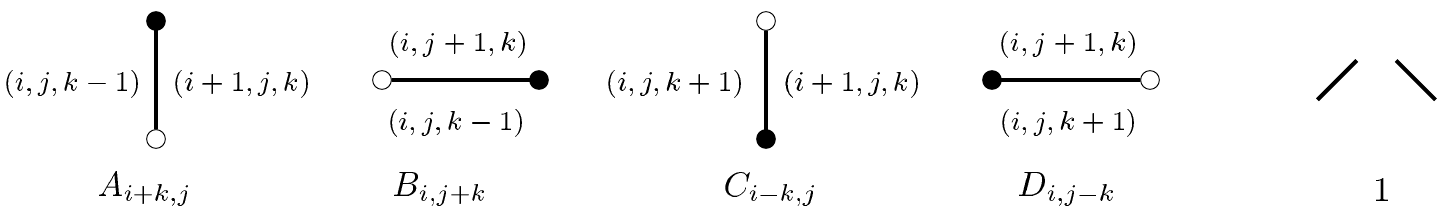}
		\end{center}
	\end{thm}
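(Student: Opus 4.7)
The plan is to prove Theorem~\ref{thm:Speyer} by induction on the number of downward mutations from $\proj_p$ to $\k$, closely paralleling the inductive argument for Theorem~\ref{thm:maininf}. Throughout, the pairing-weight $w_p$ is replaced by Speyer's edge-weight $w_s$, and the T-system recurrence \eqref{eq:tsyscoef} is replaced by Speyer's recurrence \eqref{eq:speyer}. As a consistency check in the special case $\k = \fund$, one can alternatively derive the formula from \eqref{eq:mathchingsolforspeyer} by computing the tropical denominator $\bigoplus_{M} w_p(M)|_{c_{i,j} = y_{i,j}}$ explicitly and verifying that, after division, each term $w_p(M)|_{c_{i,j} = y_{i,j}}$ reorganizes into Speyer's edge-local product $w_s(M)$.

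First I would pass to the infinite completion $G_\infty = G_{\k_p}$ and to acceptable perfect matchings as in the proof of Theorem~\ref{thm:maininf}. Because $w_s$ evaluates trivially on the diagonal edges in $E(G_\infty)\setminus E(G)$, the weight of $M$ agrees with that of $M_\infty$ under the bijection of Proposition~\ref{prop:infcomp}, reducing the theorem to acceptable perfect matchings on $G_\infty$. The base case $\k = \proj_p$ is handled by the unique matching shown in Figure~\ref{fig:p}, whose total weight equals $t_{i_0,j_0} = T^{(s)}_{i_0,j_0,k_0}$.

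For the inductive step, suppose $\k$ is obtained from a stepped surface $\k'$ (for which the formula is assumed) by one downward mutation at a face $(i,j)$ with $\k'(i,j) = k+1$ and $\k(i,j) = k-1$. Each acceptable matching of $G_{\k'}$ falls into one of the local types (A), (B), (C1), (C2) classified in Table~\ref{fig:matching-mutation}, and corresponds under urban renewal to one or two matchings of $G_\k$. For each type I would compare $w_s w_f$ on the two sides and verify that summing over the local contributions reproduces \eqref{eq:speyer} multiplied by $w_s(M)w_f(M)/\bigl(T^{(s)}_{i,j,k-1}T^{(s)}_{i,j,k+1}\bigr)$. The crucial local claim is that the product of $w_s$-weights on the newly created edges on one side of the urban renewal at $(i,j)$ equals $B_{i,j+k}D_{i,j-k}$, while on the opposite side it equals $A_{i+k,j}C_{i-k,j}$, matching the two coefficients in \eqref{eq:speyer}.

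The main obstacle will be verifying that Speyer's edge-weight rule assigns exactly the right monomials in $\{A_{\ast,\ast},B_{\ast,\ast},C_{\ast,\ast},D_{\ast,\ast}\}$ to every horizontal and diagonal edge of $G_\k$, so that the cumulative effect under iterated urban renewals reproduces the height-dependent subscripts $j \pm k$ and $i \pm k$ appearing in \eqref{eq:speyer}. This requires a finite case analysis over the six local configurations of Table~\ref{tab:quiver&k}: each mutation at a face relabels its incident edges, and the weight stored on the relabeled edge must match Speyer's rule evaluated at the new height. The feature that makes this bookkeeping manageable, in contrast to the non-local pairing-weight argument underlying Theorem~\ref{thm:maininf}, is that $w_s$ is a genuine edge-weight depending only on the heights of the two faces flanking each edge, so the inductive verification becomes entirely local to the mutated face.
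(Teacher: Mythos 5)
Your plan is sound, but it is worth being clear that the paper does not actually prove Theorem~\ref{thm:Speyer}: the result is imported from \cite{Speyer}, and what the paper does instead is \emph{rederive} its content in the special case $\k=\fund$ by applying the separation formula (Theorem~\ref{thm:sep}) to the already-established principal-coefficients solution, i.e.\ by passing through \eqref{eq:speyerfromprinc} and \eqref{eq:mathchingsolforspeyer} and then comparing term-by-term with Speyer's weight. Your route --- infinite completion via Proposition~\ref{prop:infcomp}, base case $\k=\proj_p$ as in Figure~\ref{fig:p}, then induction on downward mutations with the local case analysis of Table~\ref{fig:matching-mutation} and the recurrence \eqref{eq:speyer} in place of \eqref{eq:tsyscoef} --- is essentially Speyer's original argument and the same template as the paper's proof of Theorem~\ref{thm:maininf}. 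What your route buys is a self-contained proof valid for a general admissible surface $\k$, not just $\fund$, and, as you correctly observe, the type (C1)/(C2) step simplifies because $w_s$ is genuinely edge-local, whereas the pairing-weight argument for principal coefficients has to track a non-local re-pairing along a column. What the paper's route buys is that nothing new needs to be checked: once Theorem~\ref{thm:main} is proved, the Speyer case follows formally from Theorem~\ref{thm:sep}, at the cost of a tropical evaluation of the denominator $\bigoplus_M w_p(M)$ and of being restricted to the initial surface $\fund$. The one caveat about your write-up is that essentially all of the content of the theorem sits in the step you defer --- verifying that Speyer's edge-weight rule assigns exactly the monomials whose products over the two halves of the urban-renewal square are $B_{i,j+k}D_{i,j-k}$ and $A_{i+k,j}C_{i-k,j}$, that these labels propagate correctly under edge relabelling and degree-two collapses through all six configurations of Table~\ref{tab:quiver&k}, and that the diagonal edges of $E(G_\infty)\setminus E(G)$ really carry trivial weight. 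Until that finite but nontrivial bookkeeping is carried out, the argument is a correct skeleton rather than a proof.
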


	Comparing \eqref{eq:mathchingsolforspeyer} to Theorem \ref{thm:Speyer}, we can write $w_s$ on a perfect matching on $G_{p,\fund}$ in terms of the pairing weight as follows:
	\begin{align}
		w_s(M) = \frac{ w_p(M)}{ \bigoplus_{M\in\mathcal{M}_\fund} w_p(M)}
	\end{align}
	for $M\in\mathcal{M}$, where the sum in the denominator is computed in the semifield $\P = \Trop(A_{i,j},B_{i,j},C_{i,j},D_{i,j} : (i,j)\in\Z^2_{\rm even})$.
	
	\begin{ex}
		Let $p=(0,0,1)$, then we get the graph $G$ and its two matchings as follows:
		\begin{center}
			\includegraphics[scale=0.6]{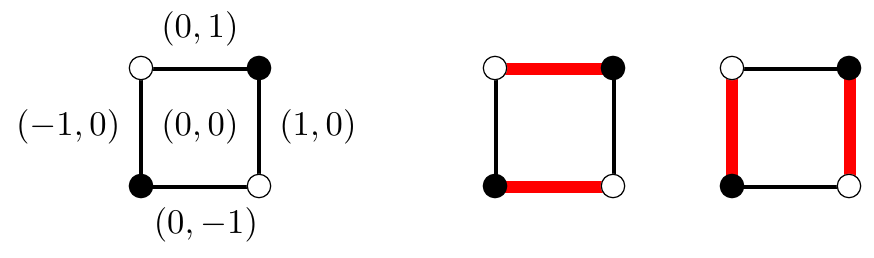}
		\end{center}
		Let $T_{i,j,\fund(i,j)} = t_{i,j}$ be the initial data. From Theorem \ref{thm:Speyer}, we have
		\[
			T^{(s)}_{0,0,1}= B_{0,0}D_{0,0}\,t_{-1,0}t_{0,0}^{-1}t_{1,0}+A_{0,0}C_{0,0}\,t_{0,-1}t_{0,0}^{-1}t_{0,1}.
		\]
		On the other hand, to apply the separation formula, we set $c_{0,0}=\frac{B_{0,0}D_{0,0}}{A_{0,0}C_{0,0}}$ as in \eqref{eq:speyercoef}.
		Equation \eqref{eq:speyerfromprinc} then gives
		\begin{align*}
			T^{(s)}_{0,0,1} &= \frac{c_{0,0}\,t_{-1,0}t_{0,0}^{-1}t_{1,0}+t_{0,-1}t_{0,0}^{-1}t_{0,1}}{c_{0,0}\oplus 1}\\
      &= B_{0,0}D_{0,0}\,t_{-1,0}t_{0,0}^{-1}t_{1,0}+A_{0,0}C_{0,0}\,t_{0,-1}t_{0,0}^{-1}t_{0,1}.
		\end{align*}
	\end{ex}

	\subsection{Lambda determinants}
	The generalized lambda-determinant \cite{DF13} can be considered as a recurrence relation on $\mathcal{T}^{(\lambda)}=\{T^{(\lambda)}_{i,j,k} \mid (i,j,k)\in\Zodd\}$ together with a set of coefficients $\{\lambda_i,\mu_i \mid i\in\Z\}$ satisfying
	\begin{align}
		T^{(\lambda)}_{i,j,k-1}T^{(\lambda)}_{i,j,k+1} =\lambda_i T^{(\lambda)}_{i-1,j,k}T^{(\lambda)}_{i+1,j,k} + \mu_j T^{(\lambda)}_{i,j-1,k}T^{(\lambda)}_{i,j+1,k}
		\label{eq:lambda}
	\end{align}
	for all $(i,j,k)\in\Zodd.$

	It can also be realized \cite{DF13} as a cluster algebra with coefficients.
	The quiver is the octahedron quiver, the initial cluster variables are $T_{i,j,\fund(i,j)}$ and the initial coefficients are
	\begin{align}
		y_{i,j}=
		\begin{cases}
			\lambda_i/\mu_j, &i+j\equiv 0\bmod 2,\\
			\mu_j/\lambda_i, &i+j\equiv 1\bmod 2,
		\end{cases}
		\label{eq:lambdadetcoef}
	\end{align}
	in $\P = \Trop( \lambda_i,\mu_i : i\in\Z )$.
	By Remark \ref{rem:GeometricFrozen}, we can also interpret it as a coefficient-free cluster algebra with frozen variables $\{\lambda_i,\mu_i \mid i\in\Z\}$ with the quiver illustrated in Figure \ref{fig_LambdaDet}.
	\begin{figure}%
		\begin{center}\includegraphics[scale=0.8]{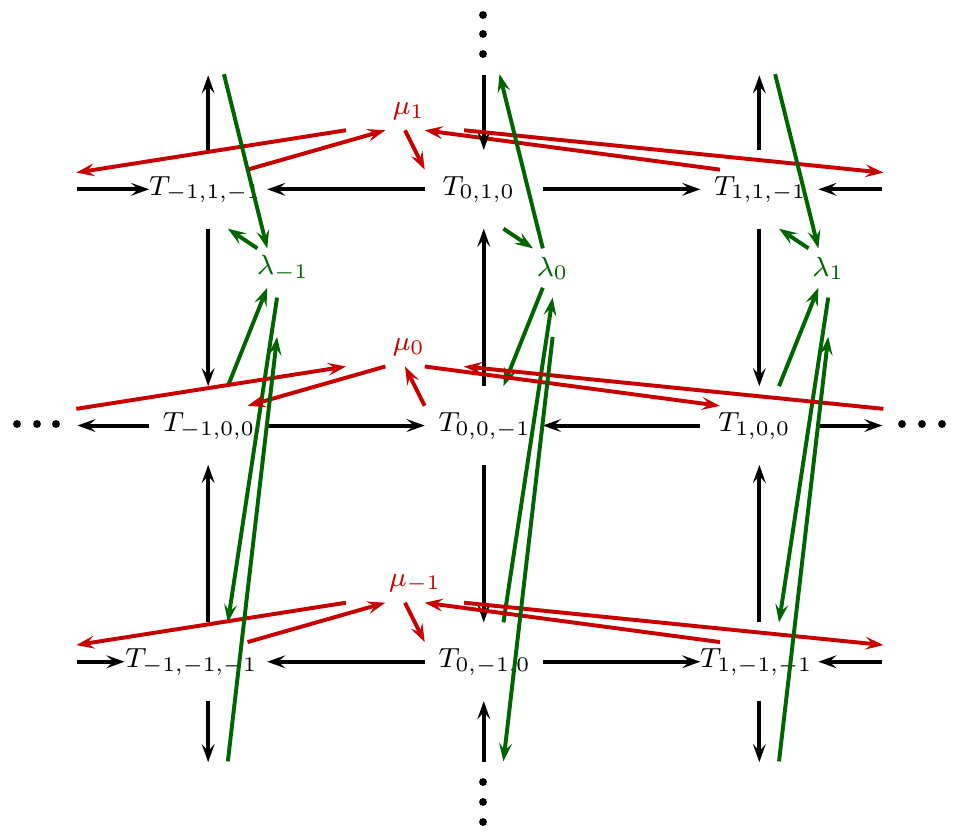}\end{center}
		\caption{A portion of the infinite quiver of the lambda-determinant.}%
		\label{fig_LambdaDet}%
	\end{figure}
	
	With the initial data stepped surface $\fund$, we let $T^{(\lambda)}_{i_0,j_0,k_0}$ be the expression in terms of the initial data $t_{i,j}:=T_{i,j,\fund(i,j)}$ and $\lambda_i,\mu_j$.
	By the separation formula (Theorem \ref{thm:sep}), we have
	\begin{align}
		T^{(\lambda)}_{i_0,j_0,k_0} = \frac{T_{i_0,j_0,k_0}(c_{i,j}=y_{i,j})}{T_{i_0,j_0,k_0}|_{\P}(t_{i,j}=1;c_{i,j}=y_{i,j})}
	\end{align}
	where $\P=\Trop(\lambda_i,\mu_i : i\in\Z)$ and
	\begin{align}
		y_{i,j}=
		\begin{cases}
			\lambda_i/\mu_j, &i+j\equiv 0\bmod 2,\\
			\mu_j/\lambda_i, &i+j\equiv 1\bmod 2.
		\end{cases}
		\label{eq:lambdacoef}
	\end{align}


\subsection{Pentagram maps}
	The pentagram map \cite{Schwartz,OST} is a discrete evolution on points in $\R\P^2$.
	It maps on a twisted polygon with $n$ vertices to give another twisted $n$-gon whose vertices are the intersections of the shortest diagonals of the original polygon.
	In \cite{Glick} the pentagram map evolution is interpreted as the mutation in a Y-pattern (cluster mutation on cluster coefficients).
	Using this interpretation, the authors of \cite{GSTV14} give a generalization of the pentagram map called \defemph{higher pentagram maps}.
	For a given integer $3\leq\kappa\leq n-1$, the higher pentagram map on a twisted $n-$gon produces a new polygon using $(\kappa-1)^{th}$-diagonals (connecting vertex $i$ to vertex $i+\kappa-1$) instead of the shortest diagonals (connecting vertex $i$ to vertex $i+2$) in the case when $\kappa=3$.
	The following is an example of one evolution of the higher pentagram map, $\kappa=4$, on a closed 9-gon.
	\begin{center}
		\includegraphics[scale=0.8]{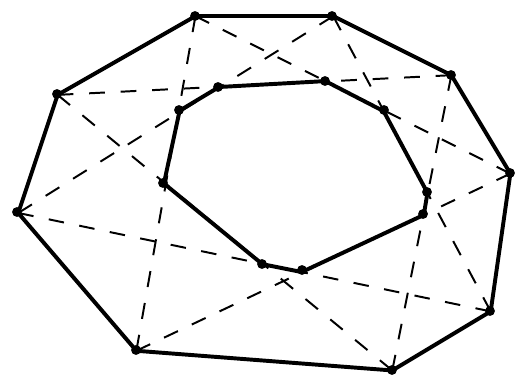}
	\end{center}
	
	For a twisted $n-$gon, one can define $2n$ variables $p_1,\dots,p_n,q_1,\dots,q_n \in \R$.
	Then the evolution \cite{Glick,GSTV14} of the higher pentagram map on these variables are as follows
	\begin{align*}
		q_i' = p_i^{-1}, \quad p_i' = q_i  \dfrac{(1+p_{i-r})(1+p_{i+r'})}{(1+p_{i-r-1}^{-1})(1+p_{i+r'+1}^{-1})},
	\end{align*}
	where $r=\lfloor\frac{\kappa-2}{2}\rfloor$ and $r'=\lceil\frac{\kappa-2}{2}\rceil$ and $p'_1,\dots,p'_n,q'_1,\dots,q'_n$ are the new variables associated with the new polygon produced by the higher pentagram map.
	We note that the variables originally defined in \cite{GSTV14} differ from the ones considered here by a change of variables and a shift in indices.
	See \cite{GSTV14} and \cite{KV14} for more details.
	
	The evolution of the variables $p_i,q_i$ can be also realized \cite{Glick,GSTV14} as the Y-pattern of a cluster algebra of rank $2n$ in the universal semifield $$\P=\mathbb{Q}_{sf}(p_1,\dots,p_n,q_1,\dots,q_n)$$ with the initial coefficient tuple $(p_1,\dots,p_n,q_1,\dots,q_n)$.
	The exchange matrix is $B=\begin{psmallmatrix}\mathbf{0}&C\\-C^T&\mathbf{0}\end{psmallmatrix}$ where $C=(c_{ij})$ is an $n\times n$ matrix defined by $c_{ij} = \delta_{i,j-1}-\delta_{i,j}-\delta_{i,j+1}+\delta_{i,j+1}$ (the indices are read modulo $n$).
	The quiver corresponding to $B$, the \defemph{generalized Glick's quiver}, is a bipartite graph with $2n$ vertices labeled by $p_1,\dots,p_n,q_1,\dots,q_n$ with four arrows adjacent to each $q_i$ as the following.
	\begin{center}
		\includegraphics[scale=0.7]{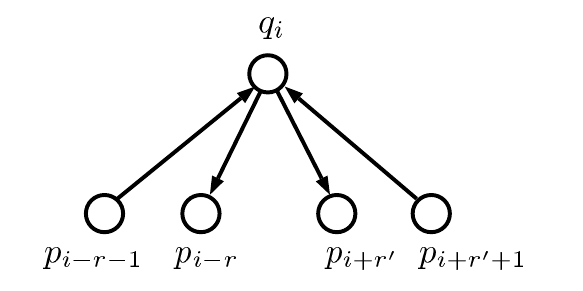}
	\end{center}
	Then the variables $p'_1,\dots,p'_n,q'_1,\dots,q'_n$ of the new polygon produced by the higher pentagram map is obtained by a composition of mutations at all $p_i$ for $i\in[1,n]$.
	
	Wrapping this finite quiver around a torus, we can then interpret it as the octahedron quiver with certain identification of vertices, along the two periods of the torus: $(i,j)\equiv(i+\kappa,j+2-\kappa)$ and $(i,j)\equiv(i+n,j-n)$ \cite{DFK13}.
	Hence we can realize the higher pentagram map as a Y-pattern on the octahedron quiver with periodic initial data $p_1,\dots,p_n,q_1,\dots,q_n$ in $\mathbb{Q}_{sf}(p_1,\dots,p_n,q_1,\dots,q_n)$, where only mutations at the vertex with exactly two incoming and two outgoing arrows are allowed.
	
	Let $\pi:\Z^2\rightarrow\{p_1,\dots,p_n,q_1,\dots,q_n\}$ be the map identifying the vertex of the octahedron quiver with the vertex of the generalized Glick's quiver.
  We pick $\pi$ by starting with $\pi(0,0)=p_n$ and then continuing by the following patterns:
  \[
    \begin{matrix}
    &q_{i-r'-1}&\\
    q_{i+r}&p_i&q_{i-r'}\\
    &q_{i+r+1}
    \end{matrix}
    \qquad\text{and}\qquad
    \begin{matrix}
    &p_{i-r-1}&\\
    p_{i+r'}&q_i&p_{i-r}\\
    &p_{i+r'+1}
    \end{matrix}
  \]
  where the indices are read modulo $n$. This choice of $\pi$ agrees with the map $\phi$ used for unfolding periodic two dimensional quivers in \cite[Section 12]{GP15} in the case when $S=\{(0,0),(\kappa-1,0),(r,1),(r+1,1)\}$ is a Y-pin for the higher pentagram map. The following is an example of the assignment by the map $\pi$ where the indices are read modulo $n$ and the center is $(0,0)$.
  \[
    \begin{matrix}
    &&&\vdots&&&\\
    &p_{n-2}&q_{n-r'-2}&p_{n-\kappa}&q_{n-\kappa-r'}&p_{n-2\kappa+2}&\\
    &q_{r-1}&p_{n-1}&q_{n-r'-1}&p_{n-\kappa+1}&q_{n-\kappa-r'+1}&\\
    \cdots&p_{\kappa-2}&q_{r}&p_n&q_{n-r'}&p_{n-\kappa+2}&\cdots\\
    &q_{\kappa+r-1}&p_{\kappa-1}&q_{r+1}&p_{1}&q_{n-r'+1}\\
    &p_{2\kappa-2}&q_{\kappa+r}&p_{\kappa}&q_{r+2}&p_{2}\\
    &&&\vdots&&&
    \end{matrix}
  \]

  Let $p^{(k)}_\ell$ and $q^{(k)}_\ell$ ($\ell\in[1,n]$) be the pentagram variables after the $k$-th iterate of the higher pentagram map.
  Then
  $$p^{(k)}_\ell = 1/q^{(k+1)}_\ell \text{ for all }\ell\in[1,n],$$
  and $q^{(k)}_\ell$ is the coefficient at a vertex $(i,j,k)$ on $\k$ such that
  $$\pi(i,j)=q_\ell \quad\text{and}\quad \k(i,j)=k=\k(i\pm 1,j\pm 1)+1.$$
  At the vertex $(i,j,k)$, the quiver associated with $\k$ will be as the following.
  \[
    \scalebox{0.8}{$
      \xymatrix@=1em{
        & (i,j+1,k-1)\ar[d] & \\
        (i-1,j,k-1) & (i,j,k)\ar[l]\ar[r] & (i+1,j,k-1) \\
        & (i,j-1,k-1)\ar[u] &
      }
    $}
  \]
	By Theorem \ref{thm:YasF} and Proposition \ref{prop:coefatvertex}, we then have
	\begin{align}\begin{aligned}
		q^{(k)}_\ell
		&= y_{(i,j),\k} \frac{ T_{i,j-1,k-1}T_{i,j+1,k-1}}{ T_{i-1,j,k-1}T_{i+1,j,k-1}}\bigg|_{\substack{T_{i,j,\fund(i,j)}=1\\c_{i,j}=\pi(i,j)}}\\
		&= \frac{I_{i,j,k-1}}{J_{i,j,k-1}} \frac{ T_{i,j-1,k-1}T_{i,j+1,k-1}}{ T_{i-1,j,k-1}T_{i+1,j,k-1}}\bigg|_{\substack{T_{i,j,\fund(i,j)}=1\\c_{i,j}=\pi(i,j)}}
		\label{eq:yijk}
	\end{aligned}\end{align}
	where $y_{(i,j),\k}$ is defined as in Proposition \ref{prop:coefatvertex}.
	This gives an expression of all the pentagram variables in terms of the solution to the T-system with principal coefficients.


\section{Conclusion and Discussion}\label{sec_General}
In this paper, we have defined the T-system with principal coefficients from cluster algebra aspect.
We obtain the octahedron recurrence with principal coefficients, which is a recurrence relation governing the T-system with principal coefficients.
Various explicit combinatorial solutions and their connection have been established.
This is for a special case when the point $p$ and the initial data stepped surface $\k$ satisfy the following conditions
	\begin{align}
		k_0 &\geq \k(i_0,j_0),\label{cond1}\\
		\k(i,j) &\geq \fund(i,j)\quad\text{for }(i,j)\in\Z^2.\label{cond2}
	\end{align}
These solutions to the T-system with principal coefficients allow us to solve any other systems having other choices of coefficients on the T-system as we seen in the previous section.
In particular, we are able to give a solution to the higher pentagram maps as a product of T-system variables and coefficients, see \eqref{eq:yijk}.

We notice a symmetry $\{i\leftrightarrow j,k\leftrightarrow -k-1\}$ of the T-system with principal coefficients \eqref{eq:tsyscoef}.
This symmetry basically switches the roles between $i$ and $j$ and reflects the system upside down.
So if we have a point $p$ and an initial data stepped surface $\k$ such that $k_0\leq\k(i_0,j_0)$ and $\k(i,j)\leq\fund(i,j)$ for $(i,j)\in\Z^2$, after applying the symmetry the system will satisfy the conditions \eqref{cond1} and \eqref{cond2}.
Furthermore, the condition \eqref{cond2} can be relaxed a little more.
Since the expression of $T_{i_0,j_0,k_0}$ depends only on the values of $\k(i,j)$ when $(i,j)\in\mathring{F}\cup\partial F$ (see \eqref{eq:opencloseface}), Condition \eqref{cond2} can be relaxed to
\begin{align*}
	\k(i,j) &\geq \fund(i,j)\quad\text{for }(i,j)\in\mathring{F}\cup\partial F. \label{newcond2}
\end{align*}
Nevertheless, an explicit combinatorial solution for arbitrary $p$ and $\k$ is still unknown.

Our general solution for the T-systems with principal coefficients may be applied to various problems related to the octahedron recurrence.
For instance, there are known connections between the T-systems and Bessenrodt-Stanley polynomials discussed in \cite{DF15}.
We expect the solutions to the T-systems with principal coefficients to provide generalizations of this family of polynomials.
It would also be interesting to apply our solutions to study the arctic curves of the octahedron equation with principal coefficients in the same spirit as \cite{DFS14}.
We expect the coefficients to act as additional probability for dimer configurations, which may give other shapes to the arctic curves.
Lastly, it would be interesting to investigate the quantum version of the T-systems with principal coefficients analog to \cite{DF11,DFK12}.


\end{document}